\newtheorem{theorem}{Theorem}[section]
\newtheorem{lemma}[theorem]{Lemma}
\newtheorem{proposition}[theorem]{Proposition}
\newtheorem{definition}[theorem]{Definition}
\newtheorem{remark}[theorem]{Remark}
\numberwithin{equation}{section}\setcounter{secnumdepth}{3}
\newcommand\NN {{\mathbb N}}
\newcommand\QQ {{\mathbb Q}}
\newcommand\RR {{\mathbb R}}
\newcommand\TT {{\mathbb T}}
\newcommand\ZZ {{\mathbb Z}}
\newcommand\slgroup{{\rm SL}}
\newcommand\unit {{\mathbf 1}}
\newcommand\distance{{\rm Dist }}
\newcommand\interior{{\rm Int}}
\newcommand\closure{{\rm Cl}}
\newcommand\cA{{\mathcal{A}  }}
\newcommand\cC{{\mathcal{C}  }}
\newcommand\cD{{\mathcal{D}  }}
\newcommand\cF{{\mathcal{F}  }}
\newcommand\cG{{\mathcal{G}  }}
\newcommand\cI{{\mathcal{I}  }}
\newcommand\cO{{\mathcal{O}  }}
\newcommand\cP{{\mathcal{P}  }}
\newcommand\cQ{{\mathcal{Q}  }}
\newcommand\cR{{\mathcal{R}  }}
\newcommand\cT{{\mathcal{T}  }}
\newcommand\cU{{\mathcal{U}  }}
\newcommand\cV{{\mathcal{V}  }}
\newcommand\cZ{{\mathcal{Z}  }}
\begin{document}

\title[Generalized IET]{Full families of generalized Interval Exchange Transformations}

\author[L. Marchese]{Luca Marchese}
\address{Universit\'e Paris 13, Sorbonne Paris Cit\'e,
LAGA, UMR 7539, 99 Avenue Jean-Baptiste Cl\'ement, 93430 Villetaneuse, France.}

\email{marchese@math.univ-paris13.fr}

\author[L. Palmisano]{Liviana Palmisano}
\address{University of Bristol, Howard House, Queens Ave.,
Bristol BS8 1SD, U.K.}

\email{liviana.palmisano@gmail.com}


\begin{abstract}
We consider generalized interval exchange transformations, or briefly GIETs, that is bijections of the interval which are piecewise increasing homeomorphisms with finite branches. When all continuous branches are translations, such maps are classical interval exchange transformations, or briefly IETs. The well-known Rauzy renormalization procedure extends to a given GIET and a Rauzy renormalization path is defined, provided that the map is infinitely renormalizable. We define full families of GIETs, that is optimal finite dimensional parameter families of GIETs such that any prescribed Rauzy renormalization path is realized by some map in the family. In particular, a GIET and a IET with the same Rauzy renormalization path are semi-conjugated. This extends a classical result of Poincar\'e relating circle homeomorphisms and irrational rotations.  
\end{abstract}

\maketitle


\section{Introduction}

In 1885 Poincar\'e proved that any circle homeomorphism with irrational \emph{rotation number} is semi-conjugate to a rotation. More precisely, consider $\TT:=\RR/\ZZ$ and a homeomorphism $f:\TT\to\TT$ with irrational rotation number $\alpha$. Then there exists a continuous map 
$
h:\TT\to\TT
$ 
which preserves the cyclic order and such that $h\circ f=R_\alpha\circ h$, where $R_\alpha:\TT\to\TT$ is the map given by 
$
R_\alpha(x):=x+\alpha\mod\ZZ
$, 
see \cite{Poincare}. If the conjugacy $h$ is a homeomorphism, then $f$ and $R_\alpha$ are conjugate. Denjoy proved that this holds if $f$ is a $C^1$ diffeomorphism and, as essential additional assumption, its derivative $f'$ has bounded variation, see \cite{Denjoy}. 
More refined results on the smoothness of the conjugacy $h$ where obtained later under specific assumptions on the rotation number, see in particular \cite{KatzOrnst}, \cite{Hermann}, \cite{Yoccoz}.



It is natural to consider these questions for the class of \emph{generalized interval exchange transformations}, or briefly GIETs, that is bijections $f:[0,1)\to[0,1)$ of the interval which are locally piecewise increasing homeomorphisms, where the number of continuous branches is finite (see \S~\ref{SectionStatementOfResults} for definitions). These maps arise as a natural generalization of \emph{interval exchange transformations}, or briefly IETs, which are the maps in the class described above for which all continuous branches are translations (our way to renormalize GIETs is modeled on \emph{Rauzy induction} on classical IETs, and for this reason we restrict to maps whose continuous branches are strictly increasing). In this setting, we consider extensions of Poincar\'e's result. First of all, the notion of rotation number is generalized to GIETs by the notion of \emph{Rauzy renormalization path} (see Definition~\ref{DefinitionRauzyPathOfGIET}), where in particular infinitely renormalizable GIETs are those for which such a renormalization path exists and is \emph{infinite complete} in the sense of Marmi-Moussa-Yoccoz \cite{MarmiMoussaYoccozCohomological}. If 
$f:[0,1)\to[0,1)$ is an infinitely renormalizable GIET and $T:[0,1)\to[0,1)$ is an IET with the same renormalization path, then the two maps are semi-conjugated, that is $h\circ f=T\circ h$ for a continuous non-decreasing map $h:[0,1)\to[0,1)$ (see Proposition~\ref{PropositionSemiconjugation}).  A consequent question is to ask if any Rauzy renormalization path is in fact realized in the class of GIETs. Our main result, namely Theorem~\ref{TheoremFullFamily}, gives a positive answer, under a mild combinatorial assumption which plays an essential role in our argument (a discussion on the generality of such assumption appears in Remark~\ref{RemarkExistenceCyclicData}). More precisely, in Definition~\ref{DefinitionFullFamily} we introduce parameter families of GIETs with the optimal finite number of real parameters. When the extra combinatorial assumption is satisfied, then, for any specific Rauzy renormalization path, there exists a map in the family which realizes it. Having such a property, these parameter families of GIETs are called \emph{full families}. 

Similar results on full families have been proved in the past in different settings: continuous non decreasing circle maps \cite{deMelo-vanStrien}, tent maps \cite{ Milnor-Thurston}, multimodal interval maps \cite{Milnor-Thurston}, Lorenz maps \cite{Martens-deMelo}, quadratic complex maps \cite{DuadyHubbard}. A negative result occurs for H\'enon maps \cite{HazardMartensTresser}. For circle maps the full family theorem is a direct consequence of the continuity of the rotation number 
and the mean value theorem. Complex methods are used for proving the theorem in the context of the quadratic complex family. In the other cases the result is achieved using a method introduced by Thurston. In \S~\ref{SectionThurstonMap} and \S~\ref{SectionExistenceFixedPointThurstonMap} we adapt such method to the setting of GIETs: we define a map on the \emph{space of configurations} (see Definition~\ref{DefinitionGammaConfiguration}), which is naturally identified with a finite dimensional simplex, and we obtain a dynamically generated configuration as a fixed point of such map. Such dynamically generated configuration corresponds to a GIET in the full family. Then, considering longer and longer finite renormalization paths and taking a limit one obtains the required GIET. The properties (1), (2) and (3) defining a full family (see Definition~\ref{DefinitionFullFamily}) arise naturally while we implement such method.

Our results are the beginning of a long-term project aiming to describe the quality of the semi-conjugacy between a GIET and a classical IET. It is known that already for the case of affine interval exchange transformations the semi-conjugacy is not 
always a conjugacy, see \cite{MarmiMoussaYoccozWandering}. The renormalization path generated by the map is not the only invariant determining whether systems are conjugated or not. We expect that similar and new phenomena will appear also for GIETs. Our result is also the first step for describing the 
semi-conjugacy classes. It is conjectured in \cite{MarmiMoussaYoccozLinearization} that they are topological submanifolds of co-dimension $d-1$, where $d$ is the number of the subintervals determining the IET. In the next section we present our results in all details.

\subsection{Statement of results}
\label{SectionStatementOfResults}

An alphabet $\cA$ is a finite set with cardinality $|\cA|=d\geq 2$. A combinatorial datum over $\cA$ is a pair 
$
\pi=(\pi_t,\pi_b)
$ 
of bijections 
$
\pi_t,\pi_b:\cA\to\{1,\dots,d\}
$. 
Such $\pi$ is said \emph{admissible} if 
$
\pi_t^{-1}\{1,\dots,k\}\not=\pi_b^{-1}\{1,\dots,k\}
$ 
for any $k=1,\dots,d-1$. A \emph{right-open interval} is an interval $[a,b)\subset\RR$ closed on the left and open on the right. Denote $\interior(I):=(a,b)$ and $\closure(I):=[a,b]$ respectively its interior and its closure. Obviously, right-open intervals are never empty. When there is no ambiguity, we will refer to them simply as intervals.

\medskip

Given a combinatorial datum $\pi$ and a right-open interval $I$, consider two partitions of $I$ into $d$ right-open subintervals
$
\cP_t=\{I^t_\alpha\}_{\alpha\in\cA}
$ 
and 
$
\cP_b=\{I^b_\alpha\}_{\alpha\in\cA}
$, 
where for any $\alpha\in\cA$ the subintervals   
$
I_\alpha^t
$ 
and 
$
I_\alpha^b
$ 
appear respectively in $\pi_t(\alpha)$-th and $\pi_b(\alpha)$-th position in $I$ counting from the left.   A \emph{generalized interval exchange transformation}, of briefly GIET, is a map $f:I\to I$ such that for any $\alpha\in\cA$ the restricted map
\begin{equation}
\label{EquationPiecewiseHomeomorphism}
f|_{I_\alpha^t}:I_\alpha^t\to I_\alpha^b
\end{equation}
is an orientation preserving homeomorphism (see the left part of Figure~\ref{FigureDegenerationGIETs}). The restricted map in Equation~\eqref{EquationPiecewiseHomeomorphism} is called a \emph{continuous branch} of $f$ and is denoted $f_\alpha$. Clearly, if $f:I\to I$ is a GIET, then $f$ is a bijection of $I$ and its inverse $f^{-1}:I\to I$ is also a GIET. A map $T:I\to I$ is an \emph{interval exchange transformation}, or briefly IET, if it is a GIET and for any $\alpha\in\cA$ the restricted map in Equation~\eqref{EquationPiecewiseHomeomorphism} is a translation. A \emph{length datum} is any vector $\lambda\in\RR_+^\cA$ with all entries positive. Any pair of combinatorial-length data $(\pi,\lambda)$ determines uniquely an IET, denoted $T=T(\pi,\lambda)$, acting on the interval 
$
I:=[0,\sum_{\chi\in\cA}\lambda_\chi)
$, 
where the intervals in the partitions  
$
\cP_t=\{I^t_\alpha\}_{\alpha\in\cA}
$ 
and 
$
\cP_b=\{I^b_\alpha\}_{\alpha\in\cA}
$ 
of $I$ are defined for $\alpha\in\cA$ by
$$
I^t_\alpha:=
\Big[
\sum_{\pi_t(\chi)\leq\pi_t(\alpha)-1}\lambda_\chi,
\sum_{\pi_t(\chi)\leq\pi_t(\alpha)}\lambda_\chi
\Big)
\quad
\textrm{ and }
\quad
I^b_\alpha:=
\Big[
\sum_{\pi_b(\chi)\leq\pi_b(\alpha)-1}\lambda_\chi,
\sum_{\pi_b(\chi)\leq\pi_b(\alpha)}\lambda_\chi
\Big).
$$ 

We denote by $\cG(\pi,I)$ the set of all GIETs defined on the interval $I$ with combinatorial datum $\pi$. We denote by $\cT(\pi,I)$ the set of those $f\in\cG(\pi,I)$ which are IETs. Recall that the euclidian distance $d(x,y)$ between points $x,y$ in $\RR^2$ induces the \emph{Hausdorff distance} 
$\distance_H(E,F)$ between closed sets $E,F\subset [\closure(I)]^2$, that is 
$$
\distance_H(E,F):=
\max\left\{
\quad
\sup_{x\in E}\inf_{y\in F}d(x,y)
\quad,\quad
\sup_{x\in F}\inf_{y\in E}d(x,y)
\quad
\right\}.
$$
For $f\in\cG(\pi,I)$ and $\alpha\in\cA$ let $f_\alpha$ be its continuous branch as in Equation~\eqref{EquationPiecewiseHomeomorphism}, denote by $G_{f_\alpha}$ the graph of $f_\alpha$ and by 
$
\overline{G_{f_\alpha}}
$ 
the closure of $G_{f_\alpha}$ in $[\closure(I)]^2$. The set $\cG(\pi,I)$ is a metric space setting 
\begin{equation}
\label{EquationDinstanceGIETs}
\distance(f,f'):=
\sum_{\alpha\in\cA}
\distance_H(\overline{G_{f_\alpha}},\overline{G_{f'_\alpha}}).
\end{equation}

For any $\alpha\in\cA$ consider the points 
$
u^t_\alpha:=\inf I^t_\alpha
$ 
and 
$
u^b_\alpha:=\inf I^b_\alpha
$, 
so that in particular 
$
u_{\alpha_0}^t=u_{\beta_0}^b=\inf I
$ 
for the letters $\alpha_0,\beta_0$ with $\pi_t(\alpha_0)=\pi_b(\beta_0)=1$. The points $u_\alpha^t$ and $u_\alpha^b$ for $\alpha\in\cA$ are called respectively the \emph{critical points} and the \emph{critical values} of $f$. When $f$ is allowed to vary inside $\cG(\pi,I)$, we denote these points by 
$
u_\alpha^t(f)
$ 
and 
$
u_\alpha^b(f)
$. 
In particular, if $f=T(\pi,\lambda)$ is the IET determined by combinatorial-length data $(\pi,\lambda)$ then its critical values $u_\alpha^b(f)$ depend on $\lambda$ via the linear relation 
\begin{equation}
\label{EquationMarkingCriticalValues}
u_\alpha^b(f)=
\sum_{\pi_b(\chi)\leq\pi_b(\alpha)-1}\lambda_\chi
\quad
\textrm{ for any }
\quad
\alpha\in\cA.
\end{equation}
Similarly, the critical points of $f=T(\pi,\lambda)$ are 
$
u_\alpha^t(f)=\sum_{\pi_t(\chi)\leq\pi_t(\alpha)-1}\lambda_\chi
$ 
for $\alpha\in\cA$. Orbits of critical points and critical values contain relevant information on the dynamical properties of GIETs. In terms of such special orbits, Lemma~\ref{LemmaConnectionsRenormalizability} establishes a criterion which determines when an IET is \emph{infinitely renoramlizable}, or briefly a \emph{Keane IET} (see \S~\ref{SectionIterationRauzyInductionGIETs} for definitions). According to a well known result of M. Keane any such $T$ is also minimal (see \cite{Keane}, and also Lemma~\ref{LemmaConnectionsRenormalizability}). Moreover, according to \cite{MarmiMoussaYoccozCohomological}, the \emph{renormalization path} of any such $T$ is also \emph{infinite complete}, in the sense recalled in Definition~\ref{DefinitionRauzyPathOfGIET} below. More generally, in Definition~\ref{DefinitionRauzyPathOfGIET} it is introduced the notion of \emph{infinitely renormalizable} 
$
f\in\cG(\pi,I)
$ 
and the notion of \emph{Rauzy path} $\gamma(f,\infty)$ of any such GIET $f$. Differently from the case of classical IETs, in the general case infinite completeness must be added as an extra assumption. The notion of Rauzy path can be considered as a generalization of the \emph{rotation number} of a circle homeomorphism, and according to Proposition~\ref{PropositionPartitionDeterminesRauzyPath} such path is determined by the combinatorics of the orbits of critical points and critical values. The goal of this paper is to show that in a nice parameter families of GIETs, which in Definition~\ref{DefinitionFullFamily} are called \emph{full families}, one can find maps having any prescribed Rauzy path (see Theorem~\ref{TheoremFullFamily}). A motivation for this purpose is given by Proposition~\ref{PropositionSemiconjugation} below, which establishes an easy and standard result (see also Proposition 7 at page 45 in \cite{YoccozCollege}).

\begin{proposition}
[Poincar\'e, Yoccoz]
\label{PropositionSemiconjugation}
Let $T=T(\pi,\lambda)$ be any Keane IET determined by combinatorial-length data $(\pi,\lambda)$ and consider $f\in\cG(\pi,I)$, where 
$
I:=\big[0,\sum_{\chi\in\cA}\lambda_\chi\big)
$. 
If
$
\gamma(f,\infty)=\gamma(T,\infty)
$, 
that is $f$ and $T$ have the same Rauzy path, then $f$ is semi-conjugated to $T$, that is there exists a non decreasing, continuous and surjective map $h:I\to I$ such that
\begin{equation}
\label{EquationSemiconjugation}
h\circ f=T\circ h.
\end{equation}
\end{proposition}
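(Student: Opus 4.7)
The plan is to build $h$ directly from the two sequences of dynamical partitions produced by running Rauzy induction in parallel on $f$ and $T$. The hypothesis $\gamma(f,\infty)=\gamma(T,\infty)$ guarantees that these two sequences are combinatorially identical at every stage.

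\textbf{Step 1 (Matched dynamical partitions).} At each step $n$ of Rauzy induction one obtains a nested subinterval of $I$ together with a Rohlin tower decomposition of $I$ indexed by $\cA$. Both the tower heights and the left-to-right positions of their levels within $I$ depend only on the truncated path $\gamma(\cdot,n)$. Hence one gets two partitions $\cP_n(f)$ and $\cP_n(T)$ of $I$ into right-open subintervals carrying identical combinatorial labelings. Let $D_n(f),D_n(T)\subset\closure(I)$ be their sets of left endpoints and set $D(f):=\bigcup_n D_n(f)$, $D(T):=\bigcup_n D_n(T)$.

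\textbf{Step 2 (Endpoint correspondence).} Label matching gives a canonical order-preserving bijection $\varphi_n:D_n(f)\to D_n(T)$ for every $n$. Because refinement under one Rauzy step is also combinatorially determined, $\varphi_{n+1}$ extends $\varphi_n$; passing to the union yields an order-preserving bijection $\varphi:D(f)\to D(T)$ that fixes $\inf I$.

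\textbf{Step 3 (Monotone extension and the Keane property).} Extend $\varphi$ to $h:I\to I$ by
$$
h(x):=\sup\bigl\{\varphi(y):y\in D(f),\ y\leq x\bigr\},
$$
with the convention that the supremum is $\inf I$ if the set is empty. The map $h$ is automatically non-decreasing. Since $T$ is a Keane IET and $\gamma(T,\infty)$ is infinite complete, the maximal length of intervals in $\cP_n(T)$ tends to zero as $n\to\infty$ (standard consequence of minimality), so $D(T)$ is dense in $I$. Any jump of a non-decreasing $h$ would leave an open subinterval of $I$ missing from its image, contradicting this density; hence $h$ is continuous and surjective.

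\textbf{Step 4 (Semi-conjugacy).} On every $\cP_n$, the map $f$ either sends a point one level up within its tower or, at the top level, returns it to the base of the $n$-th step according to the combinatorics of $\gamma(f,n)$; the same description with identical combinatorics holds for $T$ on $\cP_n(T)$, and $\varphi_n$ was built precisely to respect this correspondence. Therefore $\varphi(f(x))=T(\varphi(x))$ for every $x\in D(f)$. On the (possibly empty) open intervals of $I$ complementary to $D(f)$ — which come from wandering intervals of $f$ — the sup-formula makes $h$ locally constant, and $f$ maps such an interval onto another interval of the same constant $h$-value; the identity $h\circ f=T\circ h$ extends to all of $I$ by continuity of $h$ and of the branches of $f$ and $T$.

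The main obstacle is Step~3: the absence of jumps in $h$ rests entirely on the density of $D(T)$ in $I$, which in turn uses infinite completeness of $\gamma(T,\infty)$ together with the Keane property. A secondary subtlety, irrelevant for the present statement but useful to keep in mind, is that $D(f)$ need not be dense in $I$: wandering intervals of $f$ are collapsed by $h$ to single points of a $T$-orbit, which is precisely the obstruction that makes $h$ merely a semi-conjugacy rather than a conjugacy.
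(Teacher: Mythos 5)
The paper does not provide its own proof of this proposition; it simply attributes the result to Poincar\'e and Yoccoz and refers the reader to Proposition 7 at page 45 of \cite{YoccozCollege}. Your construction is the standard one and, modulo presentation, is essentially the argument found in that reference: the shared Rauzy path gives (via Proposition~\ref{PropositionPartitionDeterminesRauzyPath}) combinatorially equivalent dynamical partitions $\cP(f,n)\sim\cP(T,n)$ for all $n$, one builds the order-preserving bijection $\varphi$ on the sets of division points, extends it monotonically by the sup formula to obtain $h$, and uses the Keane property of $T$ to ensure that the division points $D(T)$ are dense in $I$ (the diameters of the atoms of $\cP(T,n)$ tend to zero), which forces $h$ to be continuous and surjective; the semi-conjugacy identity then holds on $D(f)$ because the towers carry matching return-map combinatorics, and it propagates to all of $I$ by a routine density and one-sided continuity argument. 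Your proof is correct, and your closing remark about wandering intervals of $f$ being collapsed to points explains exactly why $h$ need not be injective.

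One small point worth making explicit if you expand this to a full write-up: when extending $h\circ f=T\circ h$ from $D(f)$ to an arbitrary $x\in I$, you should approximate $x$ by points $y_k\in D(f)$ from the right, since both $f$ and $T$ are right-continuous at their critical points while $h$ may be constant on a left-adjacent wandering gap; together with the fact that the critical points $u^t_\alpha(f)$ and $u^t_\alpha(T)$ are themselves division points, this makes the limit argument go through cleanly in every case.
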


Let $\Delta^\cA$ be the standard open simplex in $\RR^\cA$, that is the set of $\lambda\in\RR^\cA$ with $\lambda_\alpha>0$ for any $\alpha\in\cA$ and 
$
\sum_{\alpha\in\cA}\lambda_\alpha=1
$. 
If $T:I\to I$ is an IET defined on the interval $I:=[0,1)$, then its length datum belongs to 
$
\Delta^\cA
$, 
that is 
$
\cT(\pi,[0,1))=\{\pi\}\times\Delta^\cA
$. 
In the following, elements of $\Delta^\cA$ are used to parametrize more general families of GIETs via a marking of points in $[0,1)$ by the linear function in Equation~\eqref{EquationMarkingCriticalValues}, and in this general case they are denoted by the letter $\tau$, in order to avoid ambiguity with length data of IETs. Moreover such parameter families of GIETs will admit degenerations at the boundary of parameter space. We describe below the allowed degenerations.

\medskip

\begin{figure}
\begin{center}
{\begin{tikzpicture}[scale=0.06]



\draw[-] (0,0) -- (100,0) {};
\draw[-,thin,dashed] (-5,5) -- (65,5) {};
\draw[-,thin,dashed] (-5,35) -- (100,35) {};
\draw[-,thin,dashed] (-5,40) -- (100,40) {};
\draw[-,thin,dashed] (-5,70) -- (95,70) {};
\draw[-,thin,dashed] (-5,100) -- (60,100) {};

\draw[-] (0,0) -- (0,100) {};
\draw[-,thin,dashed] (20,-5) -- (20,100) {};
\draw[-,thin,dashed] (60,-5) -- (60,100) {};
\draw[-,thin,dashed] (65,-5) -- (65,70) {};
\draw[-,thin,dashed] (95,-5) -- (95,70) {};
\draw[-,thin,dashed] (100,-5) -- (100,40) {};


\node [circle,fill,inner sep=1.5pt] at (0,5) {};
\draw[-,thick] (0,5) .. controls (10,5) and (10,35) .. (20,35) {};
\node [circle,inner sep=1.5pt] at (20,35) {};
\node at (10,-5) {$A$};
\node at (-5,20) {$A$};


\node [circle,fill,inner sep=1.5pt] at (20,70) {};
\draw[-,thick] (20,70) .. controls (30,90) and (40,100) .. (60,100) {};
\node [circle,inner sep=1.5pt] at (60,100) {};
\node at (35,-5) {$B$};
\node at (-5,85) {$B$};


\node [circle,fill,inner sep=1.5pt] at (60,0) {};
\draw[-,thick] (60,0) .. controls (62,4) .. (65,5) {};
\node [circle,inner sep=1.5pt] at (65,5) {};
\node at (62,-5) {$E$};
\node at (-5,2) {$E$};


\node [circle,fill,inner sep=1.5pt] at (65,40) {};
\draw[-,thick] (65,40) .. controls (75,45) and (85,50) .. (95,70) {};
\node [circle,inner sep=1.5pt] at (65,70) {};
\node at (80,-5) {$C$};
\node at (-5,55) {$C$};


\node [circle,fill,inner sep=1.5pt] at (95,35) {};
\draw[-,thick] (95,35) -- (100,40) {};
\node [circle,inner sep=1.5pt] at (100,40) {};
\node at (97,-5) {$D$};
\node at (-5,37) {$D$};

\end{tikzpicture}}
\hspace{2 cm}
{\begin{tikzpicture}[scale=0.06]



\draw[-] (0,0) -- (100,0) {};
\draw[-,thin,dashed] (-5,35) -- (100,35) {};
\draw[-,thin,dashed] (-5,70) -- (100,70) {};
\draw[-,thin,dashed] (-5,100) -- (60,100) {};

\draw[-] (0,0) -- (0,100) {};
\draw[-,thin,dashed] (20,-5) -- (20,100) {};
\draw[-,thin,dashed] (65,-5) -- (65,100) {};
\draw[-,thin,dashed] (100,-5) -- (100,70) {};


\node [circle,fill,inner sep=1.5pt] at (0,0) {};
\draw[-,thick] (0,0) .. controls (10,5) and (10,35) .. (20,35) {};
\node [circle,inner sep=1.5pt] at (20,35) {};
\node at (10,-5) {$A$};
\node at (-5,20) {$A$};


\node [circle,fill,inner sep=1.5pt] at (20,70) {};
\draw[-,thick] (20,70) .. controls (30,90) and (40,100) .. (65,100) {};
\node [circle,inner sep=1.5pt] at (65,100) {};
\node at (40,-5) {$B$};
\node at (-5,85) {$B$};


\node [circle,fill,inner sep=1.5pt] at (65,0) {};


\node [circle,fill,inner sep=1.5pt] at (65,35) {};
\draw[-,thick] (65,35) .. controls (75,45) and (85,50) .. (100,70) {};
\node [circle,inner sep=1.5pt] at (65,70) {};
\node at (80,-5) {$C$};
\node at (-5,55) {$C$};


\node [circle,fill,inner sep=1.5pt] at (100,35) {};

\end{tikzpicture}}
\end{center}
\caption{In the left part of the figure it is represented a GIET $f$ with five intervals. On the right part of the figure a degeneration $D$ of such $f$. One can obtain $D$ from $f$ shrinking to zero both the pairs of intervals $I^{t/b}_D(f)$ and $I^{t/b}_E(f)$. This gives rise to two points, which are the singular part of $D$. The graph of $f$ is close to $D$ in the Hausdorff distance.}
\label{FigureDegenerationGIETs}
\end{figure}
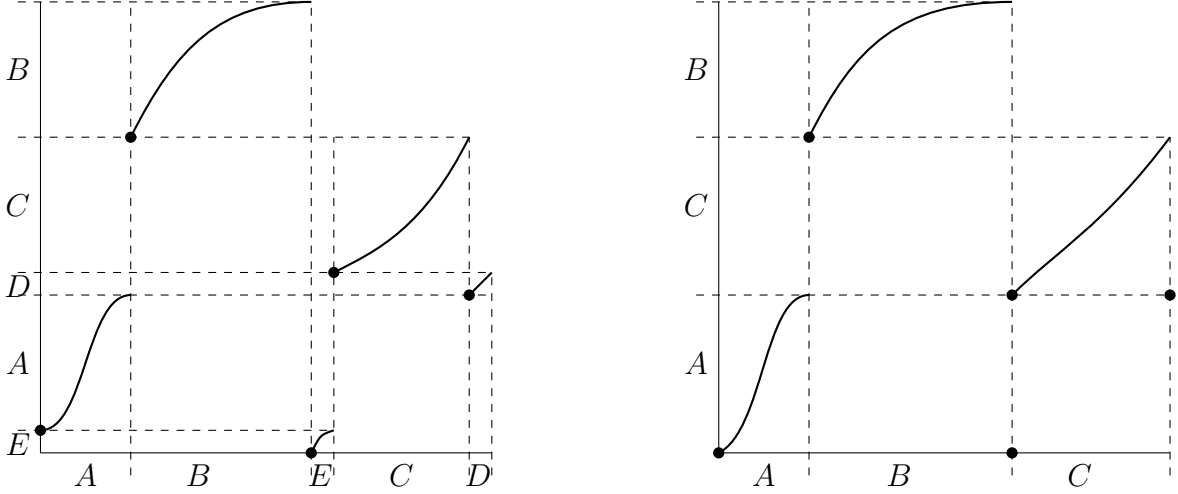

Fix a combinatorial datum $\pi$ over $\cA$. Let $\cA'\subset\cA$ be a sub-alphabet with $d'$ elements, where $1\leq d'\leq d-1$ and for 
$\epsilon\in\{t,b\}$ let 
$
\rho_\epsilon:\pi_\epsilon(\cA')\to\{1,\dots,d'\}
$ 
be the corresponding increasing bijections, which of course depend on $\cA'$. Let 
$\pi'=(\pi'_t,\pi'_b)$ be the combinatorial datum over $\cA'$ with 
$
\pi'_t,\pi'_b:\cA'\to\{1,\dots,d'\}
$ 
defined by 
$
\pi'_t:=\rho_t\circ\pi_t|_{\cA'}
$ 
and 
$
\pi'_b:=\rho_b\circ\pi_b|_{\cA'}
$. 
Such $\pi'$ is called a \emph{reduction} of $\pi$. In general $\pi'$ is not admissible even if $\pi$ is.

Let $I=[a,b)$ be a right-open interval. A \emph{degeneration} of a GIET in $\cG(\pi,I)$ is a subset 
$
D\subset[a,b]^2
$ 
which is the union $D=G_f\cup S$ of two pars. The \emph{regular part} $G_f$ of $D$ is the graph of a GIET $f\in\cG(\pi',I)$ whose combinatorial datum $\pi'$ is a reduction of $\pi$ with alphabet $\cA'\subset\cA$. The \emph{singular part} $S$ of $D$ is a finite set with $|\cA|-|\cA'|$ points, whose $x$-coordinate and $y$-coordinate are on the boundary of the continuity intervals of $f$ and $f^{-1}$ respectively, that is 
$$
S\subset
\big(\{u^t_\alpha(f),\alpha\in\cA'\}\cup\{b\}\big)
\times
\big(\{u^b_\alpha(f),\alpha\in\cA'\}\cup\{b\}\big).
$$
Let $\cD(\pi',I)$ be the set of degenerations $D$ whose regular part $f:I\to I$ has combinatorial datum $\pi'$. As it is shown in the right part of Figure~\ref{FigureDegenerationGIETs}, such degenerations can be obtained as limits, in the parameter $\tau\in\Delta^\cA$, of maps $f_\tau\in\cG(\pi,I)$, where for any $\alpha\in\cA'$ the branches $f_\tau|_{I^t_\alpha}$ in Equation~\eqref{EquationPiecewiseHomeomorphism} keep homeomorphism also in the limit, while the branches corresponding to letters in $\cA\setminus\cA'$ are contracted to a point in $S$. Finally consider the set  
$$
\widehat{\cG}(\pi,I):=
\cG(\pi,I)\cup
\bigcup_{\pi'\textrm{ reduction of }\pi}\cD(\pi',I).
$$ 
Consider $D\in\widehat{\cG}(\pi,I)$, decompose it as $D=G_f\cup S$ and let $\cA(D)\subset\cA$ be the alphabet of the regular part $f$ of $D$, where $S=\emptyset$ and $\cA(D)=\cA$ if 
$
D=f\in\cG(\pi,I)
$. 
When $S\not=\emptyset$ denote its points by $s_\alpha$, $\alpha\in\cA\setminus\cA(D)$. Then for any 
$\alpha\in\cA$ set 
$
D_\alpha:=\overline{G_{f_\alpha}}
$ 
if $\alpha\in\cA(D)$, and $D_\alpha:=\{s_\alpha\}$ otherwise. Finally extend to 
$\widehat{\cG}(\pi,I)$ the distance in Equation~\eqref{EquationDinstanceGIETs} setting 
\begin{equation}
\label{EquationDinstanceGIETsDegenerate}
\distance(D,D'):=
\sum_{\alpha\in\cA}\distance_H(D_\alpha,D'_\alpha).
\end{equation}

\begin{definition}
\label{DefinitionFullFamily}
A \emph{full family} over $\pi$ is a parameter family 
$
(f_\tau)_{\tau\in\Delta^\cA}
$ 
of GIETs which is the image of a map 
$$
F:\Delta^\cA\to\cG(\pi,[0,1))
\quad
,
\quad
\tau\mapsto F(\tau)=f_\tau
$$
satisfying the properties below.
\begin{enumerate}
\item
The map $F:\Delta^\cA\to\cG(\pi,[0,1))$ is continuous.
\item
For any $\tau\in\Delta^\cA$ the points defined by Equation~\eqref{EquationMarkingCriticalValues} are the critical values of $f_\tau$, that is for any $\alpha\in\cA$ we have
$$
u_\alpha^b(f_\tau)=
\sum_{\pi_b(\chi)\leq\pi_b(\alpha)-1}\tau_\chi.
$$
\item 
The map $F:\Delta^\cA\to\cG(\pi,[0,1))$ extends to a continuous map
$$
\widehat{F}:\overline{\Delta^\cA}\to\widehat{\cG}(\pi,[0,1)).
$$
\end{enumerate}
\end{definition}

Point (2) in Definition~\ref{DefinitionFullFamily} implies that the map $F$ must be injective. Moreover the linear functions  
$
\tau\mapsto \sum_{\pi_b(\chi)\leq\pi_b(\alpha)-1}\tau_\chi
$ 
defined by Equation~\eqref{EquationMarkingCriticalValues} extend continuously to the boundary of 
$\Delta^\cA$, and give a marking for the $y$-coordinate of points in the singular part $S$ of degenerations 
$
D\in\widehat{\cG}(\pi,[0,1))
$. 
If 
$
\tau^{(\infty)}\in\partial\Delta^\cA
$ 
and $\tau^{(n)}\in\Delta^\cA$ is a sequence with $\tau^{(n)}\to\tau^{(\infty)}$ as $n\to\infty$, then the continuity of $\widehat{F}$ implies that for any $\alpha\in\cA$ we must have
$$
\lim_{n\to\infty}u_\alpha^b(f_{\tau^{(n)}})=
\lim_{n\to\infty}
\sum_{\pi_b(\chi)\leq\pi_b(\alpha)-1}\tau^{(n)}_\chi
=
\sum_{\pi_b(\chi)\leq\pi_b(\alpha)-1}\tau^{(\infty)}_\chi,
$$
which is the analogous of Point (2) for the degeneration $D=\widehat{F}(\tau_\infty)$. If particular also 
$\widehat{F}$ must be injective. The map 
$
F:\Delta^\cA\to\cT(\pi,[0,1))
$, 
$
\lambda\mapsto f_\lambda:=T(\pi,\lambda)
$ 
in an example of a full family. In \S~\ref{SectionFullFamilies} we prove Proposition~\ref{PropositionFullFamilies} below.

\begin{proposition}
\label{PropositionFullFamilies}
Let $\pi$ be any combinatorial datum. There exists a continuous function 
$$
\cF:\cG\big(\pi,[0,1)\big)\times\Delta^\cA\to\cG\big(\pi,[0,1)\big)
\quad
\textrm{ , }
\quad
(f,\tau)\mapsto \cF(f,\tau)
$$
such that the following holds.
\begin{enumerate}
\item
For any $f\in\cG\big(\pi,[0,1)\big)$ we have a full family parametrized by 
$$
F:\Delta^\cA\to\cG\big(\pi,[0,1)\big)
\quad
\textrm{ , }
\quad
\tau\mapsto F(\tau):=\cF(f,\tau).
$$
\item
For any $f\in\cG\big(\pi,[0,1)\big)$ and any 
$
\tau,\tau'\in\Delta^\cA
$ 
we have
$$
\cF\big(\cF(f,\tau),\tau'\big)=\cF(f,\tau').
$$
In particular the space $\cG\big(\pi,[0,1)\big)$ is continuously foliated into full families.
\item
For any $f\in\cG\big(\pi,[0,1)\big)$, any $\tau\in\Delta^\cA$ and any $\alpha\in\cA$ the continuous branch $f_\alpha'$ of $f':=\cF(f,\tau)$ has the form 
$
f'_\alpha=\phi_\alpha\circ f_\alpha\circ \psi_\alpha^{-1}
$, 
where $f_\alpha$ is the corresponding continuous branch of $f$ and where $\phi_\alpha$ and $\psi_\alpha$ are affine functions. In particular $f_\alpha'$ and $f_\alpha$ have the same regularity.
\end{enumerate}
\end{proposition}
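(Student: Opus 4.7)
The plan is to define $\cF(f,\tau)$ by an explicit affine rescaling of both the domain and range of each continuous branch of $f$: the bottom partition lengths are forced to be $\tau$ by condition~(2) of Definition~\ref{DefinitionFullFamily}, while the top partition lengths are chosen proportional to the ratios $\lambda^t_\alpha(f)/\lambda^b_\alpha(f)$, which will make the construction closed under iteration and recover $f$ at the distinguished parameter $\tau=\lambda^b(f)$.

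Concretely, writing $\lambda^t_\alpha(f):=|I^t_\alpha(f)|$, $\lambda^b_\alpha(f):=|I^b_\alpha(f)|$, I would set $c_\alpha(f):=\lambda^t_\alpha(f)/\lambda^b_\alpha(f)>0$ and, for $\tau\in\Delta^\cA$,
$$
Z(f,\tau):=\sum_{\beta\in\cA}c_\beta(f)\,\tau_\beta,\qquad \mu_\alpha(f,\tau):=\frac{c_\alpha(f)\,\tau_\alpha}{Z(f,\tau)}\in\Delta^\cA.
$$
Then $\cF(f,\tau)=:f'$ is the GIET in $\cG(\pi,[0,1))$ whose top and bottom partitions have lengths $\mu_\alpha(f,\tau)$ and $\tau_\alpha$ respectively, and whose continuous branches are $f'_\alpha:=\phi_\alpha\circ f_\alpha\circ\psi_\alpha^{-1}$, where $\psi_\alpha:I^t_\alpha(f)\to I^t_\alpha(f')$ and $\phi_\alpha:I^b_\alpha(f)\to I^b_\alpha(f')$ are the orientation-preserving affine homeomorphisms between the two pairs of intervals. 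This realises Point~(3) of the proposition immediately, and continuity of $\cF$ in $(f,\tau)$ is evident from the explicit formula.

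For Point~(1), conditions (1) and (2) of Definition~\ref{DefinitionFullFamily} are automatic; I would check condition (3) by noting that if $\tau^{(n)}\to\tau^{(\infty)}\in\partial\Delta^\cA$ with $\tau^{(\infty)}_\alpha=0$ exactly for $\alpha$ in a proper subset $B\subsetneq\cA$, then $Z(f,\tau^{(\infty)})>0$, so $\mu_\alpha(f,\tau^{(n)})\to 0$ for $\alpha\in B$; the branches indexed by $B$ collapse to singular points at the prescribed grid positions, while the remaining branches reparametrize to a GIET with reduced combinatorial datum $\pi'$ on $\cA\setminus B$, yielding a limit in $\cD(\pi',[0,1))$. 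Point~(2) is a short computation with the ratios: $c_\alpha(f')=\mu_\alpha(f,\tau)/\tau_\alpha=c_\alpha(f)/Z(f,\tau)$, so $Z(f',\tau')=Z(f,\tau')/Z(f,\tau)$ and hence $\mu_\alpha(f',\tau')=\mu_\alpha(f,\tau')$; since the branches of both $\cF(f',\tau')$ and $\cF(f,\tau')$ are the same affine rescalings of those of $f$, the two GIETs coincide, proving $\cF(\cF(f,\tau),\tau')=\cF(f,\tau')$. Specialising to $\tau=\lambda^b(f)$ gives $Z(f,\lambda^b(f))=\sum_\beta\lambda^t_\beta(f)=1$, whence $\mu_\alpha(f,\lambda^b(f))=\lambda^t_\alpha(f)$ and $\cF(f,\lambda^b(f))=f$, so every $f$ lies on its own leaf and the foliation claim follows.

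The main obstacle I expect is a careful verification of the continuous extension $\widehat{F}:\overline{\Delta^\cA}\to\widehat{\cG}(\pi,[0,1))$ with respect to the Hausdorff-type distance of Equation~\eqref{EquationDinstanceGIETsDegenerate}: one must show that the collapsing graphs $\overline{G_{f'_\alpha}}$, $\alpha\in B$, actually converge to singular points lying on the allowed grid $(\{u^t_\beta\}\cup\{1\})\times(\{u^b_\beta\}\cup\{1\})$ of the limiting regular part, and handle the case where clusters of letters adjacent in $\pi_t$ or $\pi_b$ shrink simultaneously, so that the limit is genuinely of the prescribed form $G_f\cup S$ required by the definition of a degeneration.
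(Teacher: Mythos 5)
Your construction is the same as the paper's, just presented in terms of the resulting top-partition lengths $\mu_\alpha(f,\tau)$ rather than in terms of explicit piecewise-affine reparametrizations $\phi_{(f,\tau)},\psi_{(f,\tau)}$ of $[0,1)$: indeed your $c_\alpha(f)$ and $Z(f,\tau)$ are exactly the paper's $\phi(f,\tau,\alpha)/\tau_\alpha\cdot|I^t_\alpha(f)|\cdot$(ratio) bookkeeping and $\lambda(f,\tau)$ respectively, your $\mu_\alpha$ equals $\psi(f,\tau,\alpha)\,|I^t_\alpha(f)|$, and the cocycle computation $c_\alpha(\cF(f,\tau))=c_\alpha(f)/Z(f,\tau)$, $Z(\cF(f,\tau),\tau')=Z(f,\tau')/Z(f,\tau)$ is the same as the paper's identities for $\phi$, $\psi$, $\lambda$. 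The remarks you flag as remaining work (the Hausdorff-convergence of the collapsing branches to singular points on the prescribed grid) are exactly the content of the paper's verification of Point (3) of Definition~\ref{DefinitionFullFamily}, and your extra observation that $\cF(f,\lambda^b(f))=f$ is a clean way to justify the foliation claim.
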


For any combinatorial datum $\pi$, Proposition~\ref{PropositionFullFamilies} provides natural parameter families of GIETs $(f_\tau)_{\tau\in\Delta^\cA}$ such that, for any Keane IET $T=T(\pi,\lambda)$, one can expect to find a parameter $\tau$ and a semi-conjugation $h$ such that $f_\tau$ and $T$ are related by Equation~\eqref{EquationSemiconjugation}. Unluckily, due to technical reasons appearing in \S~\ref{SectionProofTheoremRealizationRauzyPath} and in \S~\ref{SectionCyclicBehavior}, we can prove the existence of such $f_\tau$ only under an additional combinatorial assumption on the underlying combinatorial datum $\pi$, or more precisely on the \emph{Rauzy class} $\cR$ of $\pi$, which is defined in \S~\ref{SectionRauzyClasses}. Such combinatorial property seems to be satisfied in great generality, and in Remark~\ref{RemarkExistenceCyclicData} we discuss up to what extend it is in fact always true.

\begin{definition}
\label{DefinitionCyclicCombinatorialDatum}
Let $\pi$ be a combinatorial datum and   
$
\pi_t,\pi_b:\cA\to\{1,\dots,d\}
$ 
be the corresponding pair of bijections. Let 
$
\sigma=\sigma(\pi):\{1,\dots,d\}\to\{1,\dots,d\}
$ 
be the bijection given by 
$
\sigma:=\pi_b\circ\pi_t^{-1}
$. 
We say that $\pi$ is a \emph{cyclic} combinatorial datum if $\sigma(\pi)$ is cyclic of maximal order $d$.
\end{definition}

The main result of this paper is Theorem~\ref{TheoremFullFamily} below. We call it \emph{Full Family Theorem} for the analogy with similar results in the literature established for different kind of maps of the interval.

\begin{theorem}
\label{TheoremFullFamily}
Fix an admissible combinatorial datum $\pi$ and assume that the Rauzy class $\cR$ of $\pi$ contains a cyclic combinatorial datum $\pi^{(\ast)}$. Let $(f_\tau)_{\tau\in\Delta^\cA}$ be a full family over 
$\pi$. If $T\in\cT(\pi,[0,1))$ is a Keane IET, then there exists $\tau\in\Delta^\cA$ such that $f_\tau$ generates the same Rauzy path as $T$, that is 
$
\gamma(f,\infty)=\gamma(T,\infty)
$. 
\end{theorem}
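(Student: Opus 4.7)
The plan is to realize $\gamma(T,\infty)$ as $\gamma(f_{\tau},\infty)$ for some $\tau\in\Delta^\cA$ via a compactness argument on finite prefixes of the Rauzy path. Write $\gamma_n$ for the initial segment of $\gamma(T,\infty)$ of length $n$ and consider
$$
X_n:=\{\tau\in\overline{\Delta^\cA} : \widehat{F}(\tau)\text{ is renormalizable at depth }n\text{ along }\gamma_n\}.
$$
I would first establish $X_n\neq\emptyset$ for every $n$ by a Thurston-style fixed point argument, then extract a convergent subsequence $\tau_{n_k}\to\tau_\infty\in\overline{\Delta^\cA}$ with $\tau_{n_k}\in X_{n_k}$, and finally use the cyclic-datum hypothesis to force $\tau_\infty\in\Delta^\cA$. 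Once the limit parameter is interior, continuity of Rauzy induction in the metric of Equation~\eqref{EquationDinstanceGIETsDegenerate} together with $\tau_\infty\in X_n$ for every $n$ yields $\gamma(f_{\tau_\infty},\infty)=\gamma(T,\infty)$, and infinite completeness of the resulting path is inherited from that of $T$.

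The non-emptiness of $X_n$ is the substance of the argument and is where Thurston's method enters. I would build a continuous self-map $\Phi_n$ of the compact convex simplex $\overline{\Delta^\cA}$, identified with the space of $\gamma_n$-configurations from Definition~\ref{DefinitionGammaConfiguration}, with the property that each of its fixed points produces an element of $X_n$. Informally, given $\tau\in\overline{\Delta^\cA}$ one forms the GIET or degeneration $\widehat{F}(\tau)$, pulls back along the combinatorial tower dictated by $\gamma_n$, and reads off the induced marking of the critical values, which by construction belongs to $\overline{\Delta^\cA}$. The continuity of $\widehat{F}$ (Points (1) and (3) of Definition~\ref{DefinitionFullFamily}) together with continuity of Rauzy induction makes $\Phi_n$ a continuous self-map of a compact convex set, so Brouwer's theorem provides a fixed point $\tau_n$, and Proposition~\ref{PropositionPartitionDeterminesRauzyPath} links the fixed point condition to a genuine realization of $\gamma_n$, giving $\tau_n\in X_n$.

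The main obstacle is the passage to the limit $n\to\infty$: a priori the $\tau_n$ could accumulate on $\partial\Delta^\cA$, in which case $\widehat{F}(\tau_\infty)$ would be a degeneration whose regular part is supported on a strictly smaller sub-alphabet $\cA'\subsetneq\cA$, with combinatorial datum a reduction of $\pi$ belonging to a smaller Rauzy class; such a degeneration could only realize a corresponding reduction of $\gamma(T,\infty)$, not the path itself. This is precisely where the hypothesis that $\cR$ contains a cyclic datum $\pi^{(\ast)}$ is used. Since $\gamma(T,\infty)$ must visit $\pi^{(\ast)}$ at some finite depth $N$, and at $\pi^{(\ast)}$ the permutation $\sigma(\pi^{(\ast)})=\pi^{(\ast)}_b\circ(\pi^{(\ast)}_t)^{-1}$ is a single $d$-cycle, the combinatorial shape at depth $N$ rigidly links all $d$ letters and therefore cannot coincide with any reduction of a combinatorial datum to a strictly smaller sub-alphabet. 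Consequently, for $n\geq N$ the set $X_n$ contains no degenerate configuration, the sequence $\tau_n$ stays uniformly inside $\Delta^\cA$, and every accumulation point lies in the interior. The corresponding $f_{\tau_\infty}\in\cG(\pi,[0,1))$ then realizes every $\gamma_n$, so $\gamma(f_{\tau_\infty},\infty)=\gamma(T,\infty)$ as required, and Proposition~\ref{PropositionSemiconjugation} applies.
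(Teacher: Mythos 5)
Your overall strategy (realize finite prefixes via a Thurston fixed point, then pass to the limit using compactness) matches the paper's, but two of the load-bearing steps are wrong in ways that the paper's actual argument is specifically designed to avoid.

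\textbf{The Thurston map does not live on $\overline{\Delta^\cA}$.} You propose to build a self-map $\Phi_n$ of $\overline{\Delta^\cA}$ "identified with the space of $\gamma_n$-configurations." This identification is false. The configuration space $\cO(\gamma_n)$ of Definition~\ref{DefinitionGammaConfiguration} is identified with the simplex $\Delta^{N}$ where $N=N(\gamma_n)=\sum_{\chi}q^{(r)}_\chi$ counts \emph{all} the dynamically marked points of the order-$n$ partition, and $N\gg d$ as $n$ grows. A configuration $\cV$ records the full finite orbit-pattern of the critical points, not merely the $d$ critical values; the parameter $\tau=\tau(\cV)\in\Delta^\cA$ is read off by a projection (Equation~\eqref{EquationTauFunctionOfConfiguration}), but the Thurston map itself (Equation~\eqref{EquationDefinitionThurstonMap}) acts on the larger simplex. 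Collapsing to $\Delta^\cA$ loses exactly the information that makes the fixed-point condition equivalent to realizing the prescribed combinatorics of $\cP(\cdot,r)$, via Proposition~\ref{PropositionPartitionDeterminesRauzyPath} and Lemma~\ref{LemmaFixedPointsThurstonAndRauzyPath}.

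\textbf{The interior-limit step does not follow from the cyclic hypothesis.} You argue that $\gamma(T,\infty)$ "must visit $\pi^{(\ast)}$ at some finite depth $N$," and then deduce that $X_n$ for $n\geq N$ contains no degenerate parameter. This premise is unjustified: the sequence of combinatorial data $\pi^{(r)}$ along a Keane IET's Rauzy path need not exhaust the Rauzy class $\cR$ (a periodic, self-similar Keane IET traverses a loop in the Rauzy diagram, and such a loop can omit vertices while still being infinite complete). The paper handles the two issues you conflate in separate places. For the finite realization step, it extends the prefix $\gamma(T,r)$ by a short path $\eta$ landing at $\pi^{(\ast)}$ and then truncates (Remark~\ref{RemarkRealizationRauzyPath}); the cyclic datum is needed there so that the extended Thurston map $\widehat{\cT}_\gamma$ permutes the faces of $\partial\cO(\gamma)\cong\partial\Delta^N$ with maximal order (Lemma~\ref{LemmaCyclicBehavior}), which rules out boundary fixed points. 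For the interior-limit step, the paper uses a completely different mechanism: infinite completeness (via Yoccoz Prop.~7.9 and 7.12 in~\cite{YoccozClay}) produces positive factors of $\gamma(T,\infty)$, and Lemma~\ref{LemmaPositiveRauzyPaths} shows that a positive factor $\eta$ forces $\overline{\Delta_{\gamma\ast\eta}}\subset\Delta_\gamma$, so $\bigcap_r\overline{\Delta_{\gamma(T,r)}}$ is a nonempty compact subset of the \emph{open} simplex. That positivity argument, not the cyclic hypothesis, is what keeps the limit parameter off $\partial\Delta^\cA$. Without it your chain of reasoning breaks precisely where you assert "the sequence $\tau_n$ stays uniformly inside $\Delta^\cA$."
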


In particular, if $f_\tau$ and $T$ are as in Theorem~\ref{TheoremFullFamily} above, then $f_\tau$ is semi-conjugated to $T$, according to Proposition~\ref{PropositionSemiconjugation}. Such $h$ is injective, and thus a conjugation, if and only if $f$ has no \emph{wandering intervals}. Since any Borel probability measure $\mu$ which is invariant under $f$ is not supported on wandering intervals, then any such $\mu$ is the pull-back $h^\ast\nu$ of a Borel probability measure $\nu$ invariant under $T$. In particular $f$ is uniquely ergodic if and only if $T$ is.

\begin{remark}
A criterion for the existence of affine interval exchange transformations with prescribed Rauzy path was previously established by Proposition 2.3 in~\cite{MarmiMoussaYoccozWandering}. 
\end{remark}

\begin{remark}
\label{RemarkExistenceCyclicData}
It is natural to ask if Theorem~\ref{TheoremFullFamily} holds for any admissible combinatorial datum $\pi$, and we don't know any Rauzy class for which the required property fails. It is easy to check directly that a cyclic $\pi^{(\ast)}$ exists in any Rauzy class over at most five intervals. The same is true for any $d\geq2$ in the \emph{hyperelliptic} Rauzy class $\cR^{hyp}_d$ over $d$ letters, that is the class of the datum $\pi$ with  
$
\pi_t(\alpha)+\pi_b(\alpha)=d+1
$ 
for any $\alpha\in\cA$, where the required $\pi^{(\ast)}$ can be obtained from $\pi$ alternating the \emph{top} and \emph{bottom} operations $R^t$ and $R^b$ (see \S~\ref{SectionRauzyClasses}) the right number of times. 
According to private discussions with V. Delecroix, R. Guti\'errez and A. Zorich, the existence of a cyclic $\pi^{(\ast)}$ can be proved also for other infinite lists of classes (see \cite{KontsevichZorich} for a classification of Rauzy classes). Finally, given any Rauzy class $\cR$ and any $\pi\in\cR$, one can get an other datum $\pi'$ with more intervals adding \emph{marked points} to $\pi$, which provides of course a more complex Rauzy class $\cR'$. According to \cite{DelecroixGoujardZografZorich}, this procedure eventually provides a class $\cR'$ which contains a cyclic element. This implies that for any $f\in\cG\big(\pi,[0,1)\big)$, allowing deformations with more parameters than in Definition~\ref{DefinitionFullFamily}, one can get $f_\tau$ satisfying the conclusion of Theorem~\ref{TheoremFullFamily}.
\end{remark}


\begin{remark}
Given a full family $F=(f_\tau)_{\tau\in\Delta}$ and a finite Rauzy path $\gamma$ one can consider the family $\widetilde{F}$ obtained from a proper subfamily $F_\gamma=(f_\tau)_{\tau\in\Delta_\gamma}$ of $F$ under the steps of the Rauzy induction specified by $\gamma$, where $\Delta_\gamma\subset\Delta$ is a proper open set. According to a private discussion with C. Fougeron and S. Ghazouani, it is possible to show that the family $\widetilde{F}$ is itself a full family, i.e. it satisfies properties (1), (2) and (3) in Definition~\ref{DefinitionFullFamily}. This was pointed out after this paper was finished, and provides the inductive step of a recursive alternative proof of our Theorem~\ref{TheoremRealizationRauzyPath}, and as a consequence of our main Theorem~\ref{TheoremFullFamily}.  Nevertheless this alternative argument becomes clear once the Definition~\ref{DefinitionFullFamily} is properly given, and such proper definition of full family was the final outcome of the attempt of implementing the \emph{Thurston map} (see \S~\ref{SectionThurstonMap}) in the setting of GIETs.
\end{remark}

\subsection{Contents of this paper}

The rest of the paper is organized as follows.

In \S~\ref{SectionFullFamilies} we prove Proposition~\ref{PropositionFullFamilies}. The proof is based on simple constructions, which are independent from the combinatorial structures introduced in \S~\ref{SectionRauzyClassesAndInductionGIETs} and \S~\ref{SectionThurstonMap}.

In \S~\ref{SectionRauzyClassesAndInductionGIETs} we recall the essential background on Rauzy induction. In particular, \S~\ref{SectionRauzyClasses} we introduce Rauzy classes, Rauzy paths and the corresponding linear co-cycle. In \S~\ref{SectionRauzyInductionGIETs} and \S~\ref{SectionIterationRauzyInductionGIETs} we explain how the induction applies to GIETs and finally in \S~\ref{SectionDynamicalPartition} we introduce dynamically defined partitions of the interval, which play a central role in all the paper.

In \S~\ref{SectionProofFullFamilyTheorem} we prove the Full Family Theorem~\ref{TheoremFullFamily}. The proof is given in \S~\ref{SectionProofTheoremFullFamily}, via Theorem~\ref{TheoremRealizationRauzyPath}, which is the main technical result in this paper. According to Theorem~\ref{TheoremRealizationRauzyPath} (and Remark~\ref{RemarkRealizationRauzyPath}), if the Rauzy class of $\pi$ contains a cyclic element, then in a full family over $\pi$ one can find a GIET generating a dynamical partition with given prescribed combinatorics. 

In \S~\ref{SectionThurstonMap} we introduce the \emph{Thurston map}, which is the main tool necessary to prove Theorem~\ref{TheoremRealizationRauzyPath}. In \S~\ref{SectionConfigurations} we fix a prescribed model for the combinatorics of a dynamical partition and we introduce the notion of \emph{configuration}, that is a partition of the interval into labelled subintervals respecting the combinatorics of the prescribed model, but not necessarily dynamically generated. The \emph{space of configurations} is naturally identified with an open simplex. Then in \S~\ref{SectionDefinitionThurstonMap} we define the \emph{Thurston map}, which acts continuously on the space of configurations, and in \S~\ref{SectionFixedPointsThurstonMap} we explain that fixed points of such map are dynamically defined partitions. 

In \S~\ref{SectionExistenceFixedPointThurstonMap} we complete the proof of Theorem~\ref{TheoremRealizationRauzyPath} by showing that the Thurston map has fixed points. The Brower fixed point Theorem cannot be applied on the open simplex, thus in \S~\ref{SectionBoundaryConfigurationSpace} we construct a boundary for it, introducing the notion of \emph{degenerate configuration}. In \S~\ref{SectionExtensionToTheBoundary} we define an extension of the Thurston map to the closed simplex, then in \S~\ref{SectionContinuityExtendedThurstonMap} we show its continuity. Finally, in \S~\ref{SectionCyclicBehavior} we show that the extended map has a cyclic behavior on the boundary of configuration space, so the fixed point must be in the interior. This last part uses the existence of a cyclic combinatorial datum.

\subsection{Acknowledgments*}

The authors would like to thank V. Delecroix, C. Fougeron, S. Ghazouani, R. Guti\'errez, C. Matheus, C. Tresser, C. Ulcigrai and A. Zorich for many  helpful discussions and for their advices. This work was supported by the Leverhulme Trust through the Leverhulme Prize of C. Ulcigrai and by the program Research in Paris (R.I.P.), sponsored by Institute Henri Poincar\'e. L. Marchese is grateful to French CNRS and De Giorgi Center for financial and logistic support during his stay in Pisa.

\section{Construction of full families}
\label{SectionFullFamilies}

Fix $f\in\cG\big(\pi,[0,1)\big)$ and for any $\beta\in\cA$ consider its corresponding critical value $u^b_\beta(f)$. Let also $I^t_\beta(f)$ and $I^b_\beta(f)$ be the right-open intervals in Equation~\eqref{EquationPiecewiseHomeomorphism}, so that the restriction 
$
f_\beta:I^t_\beta(f)\to I^b_\beta(f)
$ 
is an orientation preserving homeomorphism. In particular $u^b_\beta(f)$ is the left endpoint of $I^b_\beta(f)$. Fix also $\tau\in\Delta^\cA$ and for any $\beta\in\cA$ let 
$
\phi(f,\tau,\beta)>0
$ 
be the real number defined by
\begin{equation}
\label{EquationSlopesImage}
\phi(f,\tau,\beta):=
\frac{\tau_\beta}{u^b_{\beta^\ast}(f)-u^b_\beta(f)},
\end{equation}
where if $\pi_b(\beta)\leq d-1$ we denote by $\beta^\ast$ the letter with 
$
\pi_b(\beta^\ast)=\pi_b(\beta)+1
$, 
whereas if $\pi_b(\beta)=d$ we set $u^b_{\beta^\ast}(f):=1$. Let 
$
\phi_{(f,\tau)}:[0,1)\to[0,1)
$ 
be the unique piecewise affine map whose slope on each $I^b_\beta(f)$ is $\phi(f,\tau,\beta)$, that is 
$$
\phi_{(f,\tau)}(x):=
\int_0^x \sum_{\beta\in\cA}\unit^b_{f,\beta}(y)\cdot\phi(f,\tau,\beta)dy,
$$
where $\unit^b_{f,\beta}(\cdot)$ denotes the characteristic function of $I^b_\beta(f)$ that is the function defined by 
$
\unit^t_{f,\beta}(y):=1
$ 
if $y\in I^b_\beta(f)$ and $\unit^b_{f,\beta}(y):=0$ if 
$
y\in [0,1)\setminus I^b_\beta(f)
$. 
The map $\phi_{(f,\tau)}$ is obviously a continuous increasing bijection, and for any $\beta\in\cA$ we have
\begin{equation}
\label{EquationSlopesImage(2)}
\phi_{(f,\tau)}\big(u^b_\beta(f)\big)
=
\sum_{\pi_b(\chi)\leq\pi_b(\beta)-1}\tau_\chi.
\end{equation}
Observe that
$
\sum_{\beta\in\cA}\phi(f,\tau,\beta)\cdot|I^b_\chi(f)|=\sum_{\beta\in\cA}\tau_\beta=1
$, 
then define the \emph{rescaling factor}
\begin{equation}
\label{EquationHorizontalRescaling}
\lambda(f,\tau):=\sum_{\beta\in\cA}\phi(f,\tau,\beta)\cdot|I^t_\chi(f)|.
\end{equation}
We have $\phi(f,\tau,\beta)\geq1$ for at least one $\beta\in\cA$. On the other hand, since    
$
\sum_{\beta\in\cA}|I^t_\beta(f)|=1
$, 
then 
$
\phi(f,\tau,\beta)<|I^t_\beta(f)|^{-1}
$ 
for any $\beta$. Thus we have  
\begin{equation}
\label{EquationBoundsHorizontalRescaling}
\min_{\beta\in\cA}|I^t_\beta(f)|
<
\lambda(f,\tau)
<
\sum_{\beta\in\cA}\frac{1}{|I^t_\beta(f)|}.
\end{equation}
For any $\alpha\in\cA$ let 
$
\psi(f,\tau,\alpha)>0
$ 
be the real number defined by 
\begin{equation}
\label{EquationSlopesDomain}
\psi(f,\tau,\alpha):=\frac{\phi(f,\tau,\alpha)}{\lambda(f,\tau)},
\end{equation}
then let 
$
\psi_{(f,\tau)}:[0,1)\to[0,1)
$ 
be the unique piecewise affine map whose slope on any $I^t_\alpha(f)$ is $\psi(f,\tau,\alpha)$, that is 
$$
\psi_{(f,\tau)}(x):=
\int_0^x \sum_{\alpha\in\cA}\unit^t_{f,\alpha}(y)\cdot\psi(f,\tau,\alpha)dy,
$$
where $\unit^t_{f,\alpha}(\cdot)$ is the characteristic function of $I^t_\alpha(f)$ for any 
$\alpha\in\cA$. Obviously $\psi_{(f,\tau)}$ is a continuous increasing bijection, and the same is true for its inverse $\psi_{(f,\tau)}^{-1}$. Moreover we have 
$$
\psi_{(f,\tau)}\big(u^t_\alpha(f)\big)
=
\sum_{\pi_t(\chi)\leq\pi_t(\alpha)-1}\tau_\chi
\quad
\textrm{ for any }
\quad
\alpha\in\cA.
$$
Finally, consider the map 
\begin{equation}
\label{EquationConstructionFullFamily}
\cF:\cG\big(\pi,[0,1)\big)\times\Delta^\cA\to\cG\big(\pi,[0,1)\big)
\quad
\textrm{ , }
\quad
(f,\tau)\mapsto \cF(f,\tau):=\phi_{(f,\tau)}\circ f\circ \psi_{(f,\tau)}^{-1}.
\end{equation}

\subsection{Proof of Proposition~\ref{PropositionFullFamilies}}

In this subsection we prove that the function $\cF$ defined by Equation~\eqref{EquationConstructionFullFamily} satisfies Point (1), Point (2) and Point (3) in Proposition~\ref{PropositionFullFamilies}. Let us observe that Point (3) is evident from the definition of $\cF$, thus we just prove the first two.

\medskip

\emph{Proof of Point (2)}. Fix $f\in\cG\big(\pi,[0,1)\big)$ and 
$
\tau,\tau'\in\Delta^\cA
$, 
then set $f':=\cF(f,\tau)$. For $\cF$ defined by Equation~\eqref{EquationConstructionFullFamily}, the statement in Point (1) is equivalent to
$$
\phi_{(f',\tau')}\circ\phi_{(f,\tau)}
\circ f\circ 
\psi_{(f,\tau)}^{-1}\circ\phi_{(f',\tau')}^{-1}
=
\phi_{(f,\tau')}\circ f\circ \psi_{(f,\tau')}^{-1}
$$
thus it is enough to prove that we have 
$$
\phi_{(f,\tau')}=\phi_{(f',\tau')}\circ\phi_{(f,\tau)}
\quad
\textrm{ and }
\quad
\psi_{(f,\tau')}=\psi_{(f',\tau')}\circ\psi_{(f,\tau')}.
$$
All the functions in the identities above are continuous piecewise affine bijections of $[0,1)$ thus the identities are satisfied if and only if the corresponding relations for derivatives hold. This last property is easy to check, indeed Equation~\eqref{EquationSlopesImage(2)} implies that for any $\beta$ we have 
$$
u^b_{\beta^\ast}(f')-u^b_\beta(f')=
\phi_{(f,\tau)}\big(u^b_{\beta^\ast}(f)\big)-
\phi_{(f,\tau)}\big(u^b_{\beta}(f)\big)
=
\tau_\beta,
$$ 
so that Equation~\eqref{EquationSlopesImage} implies
$$
\phi(f,\tau',\beta)=
\frac{\tau'_\beta}{u^b_{\beta^\ast}(f)-u^b_\beta(f)}=
\frac{\tau'_\beta}{u^b_{\beta^\ast}(f')-u^b_\beta(f')}\cdot
\frac{\tau_\beta}{u^b_{\beta^\ast}(f)-u^b_\beta(f)}=
\phi(f',\tau',\beta)\cdot\phi(f,\tau,\beta).
$$
Moreover observing that 
$
|I^t_\chi(f')|=|I^t_\chi(f)|\cdot \psi(f,\tau,\chi)
$ 
for any $\chi\in\cA$, then the relation above and Equation~\eqref{EquationSlopesDomain} give 
\begin{align*}
&
\lambda(f,\tau')
=
\sum_{\chi\in\cA}\phi(f,\tau',\chi)\cdot|I^t_\chi(f)|
=
\sum_{\chi\in\cA}
\frac{\phi(f,\tau',\chi)}{\psi(f,\tau,\chi)}\cdot|I^t_\chi(f')|=
\\
&
\sum_{\chi\in\cA}
\frac{\phi(f,\tau,\chi)}{\psi(f,\tau,\chi)}\cdot\phi(f',\tau',\chi)\cdot|I^t_\chi(f')|
=
\lambda(f,\tau)\cdot\sum_{\chi\in\cA}\phi(f',\tau',\chi)\cdot|I^t_\chi(f')|
=
\lambda(f,\tau)\cdot\lambda(f',\tau').
\end{align*}
It follows that for any $\beta\in\cA$ we have also
$$
\psi(f,\tau',\beta)
=
\frac{\phi(f,\tau',\beta)}{\lambda(f,\tau')}
=
\frac{\phi(f,\tau,\beta)}{\lambda(f,\tau)}
\cdot
\frac{\phi(f',\tau',\beta)}{\lambda(f',\tau')}
=
\psi(f',\tau',\beta)\cdot\psi(f,\tau,\beta).
$$

\medskip

\emph{Proof of Point (1)}. We prove Points (1), (2) and (3) in Definition~\ref{DefinitionFullFamily}. Point (2) is a direct consequence of the definition of the map $\phi_{(f,\tau)}$. Point (1) is a consequence of the continuity of the map $\cF$ in Equation~\eqref{EquationConstructionFullFamily}, which follows from the continuity of the maps 
\begin{align}
&
\label{Equation(1)ProofFullFamily}
\cG\big(\pi,[0,1)\big)\times\Delta^\cA\to C^0\big([0,1)\big)
\quad,\quad
(f,\tau)\mapsto \phi_{(f,\tau)}
\\
&
\label{Equation(2)ProofFullFamily}
\cG\big(\pi,[0,1)\big)\times\Delta^\cA\to (0,+\infty)
\quad,\quad
(f,\tau)\mapsto \lambda(f,\tau),
\end{align}
where $C^0\big([0,1)\big)$ denotes the set of continuous maps of the closed interval $[0,1)$ with the sup norm $\|\cdot\|_\infty$. We finish the proof of Proposition~\ref{PropositionFullFamilies} showing that Point (3) in Definition~\ref{DefinitionFullFamily} is satisfied. Fix 
$
f\in\cG\big(\pi,[0,1)\big)
$ 
and $\tau\in\partial\Delta^\cA$. Let $\cZ(\tau)\subset\cA$ be the non-empty subset of those 
$\alpha\in\cA$ such that 
$
\tau_\alpha=0
$. 
Equation~\eqref{EquationSlopesImage} and Equation~\eqref{EquationSlopesDomain} extend continuously to 
$
\tau\in\partial\Delta^\cA
$, 
defining quantities $\phi(f,\tau,\alpha)$ and $\psi(f,\tau,\alpha)$. Observe that 
$$
\phi(f,\tau,\alpha)=0
\quad
\Leftrightarrow
\quad
\tau_\alpha=0
\quad
\Leftrightarrow
\quad
\psi(f,\tau,\alpha)=0
\quad
\textrm{ for any }
\quad
\alpha\in\cA,
$$
where the first equivalence is obvious, while the second holds because $\lambda(f,\tau)$ is bounded from above and from below, uniformly in $\tau$, according to 
Equation~\eqref{EquationBoundsHorizontalRescaling}. Since $\cZ(\tau)\not=\emptyset$, then the continuous piecewise affine maps $\phi_{(f,\tau)}$ and $\psi_{(f,\tau)}$ are still surjective, but not injective. Nevertheless, removing the intervals $I^{t/b}_\alpha(f)$, $\alpha\in\cZ(\tau)$, they still define bijections
\begin{align*}
&
\phi_{(f,\tau)}:
\bigsqcup_{\beta\in\cA\setminus\cZ(\tau)}I^b_\beta(f)\to[0,1)
\quad\quad
\textrm{ and }
\quad\quad
\psi_{(f,\tau)}:
\bigsqcup_{\alpha\in\cA\setminus\cZ(\tau)}I^t_\alpha(f)\to[0,1).
\end{align*}
Hence the composition satisfies 
$
\phi_{(f,\tau)}\circ f\circ \psi_{(f,\tau)}^{-1}
\in
\cG\big(\pi',[0,1)\big)
$, 
where $\pi'$ is the combinatorial datum obtained removing the letters in $\cZ(\tau)$ from both lines of $\pi$. For any $\tau\in\partial\Delta^\cA$ as above define a degeneration $D=\widehat{F}(\tau)$ setting 
$$
\widehat{F}(\tau):=G_{f_\tau}\cup S_\tau
\in
\widehat{\cG}\big(\pi,[0,1)\big),
$$
where 
$
f_\tau:=\phi_{(f,\tau)}\circ f\circ \psi_{(f,\tau)}^{-1}
$ 
is the GIET given by the composition above and $G_{f_\tau}\subset[0,1)^2$ is its graph, and where the singular part 
$
S_\tau=\{s_\alpha\}_{\alpha\in\cZ(\tau)}
$ 
is the family of points $s_\alpha\in[0,1]^2$ given by
$$
s_\alpha:=
\bigg(
\sum_{\pi_t(\chi)\leq\pi_t(\alpha)}\tau_\chi,\sum_{\pi_b(\chi)\leq\pi_b(\alpha)}\tau_\chi
\bigg)
\quad
\textrm{ for any }
\quad
\alpha\in\cZ(\tau).
$$
Setting $\widehat{F}(\tau):=\cF(f,\tau)$ for any non degenerate $\tau\in\Delta^\cA$, we get the required extension map 
$
\widehat{F}:\overline{\Delta^\cA}\to\widehat{\cG}\big(\pi,[0,1)\big)
$. 
Such $\widehat{F}$ is continuos because the maps defined by Equation~\eqref{Equation(1)ProofFullFamily} and Equation~\eqref{Equation(2)ProofFullFamily} extend continuously to 
$
\cG\big(\pi,[0,1)\big)\times\overline{\Delta^\cA}
$. 
Proposition~\ref{PropositionFullFamilies} is proved. $\qed$

\section{Rauzy induction on GIETs}
\label{SectionRauzyClassesAndInductionGIETs}

In this section we recall some background on the \emph{Rauzy induction} on GIETs (see \cite{Rauzy}), which is also known as \emph{Rauzy-Veech induction} for his relation with the \emph{Teichm\"uller flow} on the moduli space of \emph{translation surfaces} (see \cite{Veech}). We also follow \cite{MarmiMoussaYoccozCohomological}.

\subsection{Rauzy classes and Rauzy matrices}
\label{SectionRauzyClasses}

Let $\cA$ be a finite alphabet with $d\geq2$ letters. We define two operations $R^t$ and $R^b$ on the set of admissible combinatorial data $\pi$ over $\cA$, where the symbols "$t$" and "$b$" stand for \emph{top} and \emph{bottom} respectively. It is practical to introduce the variable 
$
\epsilon\in\{t,b\}
$. 
Fix an admissible combinatorial datum $\pi=(\pi_t,\pi_b)$ and let $\alpha_t$ and $\alpha_b$ be the letters in $\cA$ such that 
$
\pi_t(\alpha_t)=\pi_b(\alpha_b)=d
$. 
Below, for $\epsilon\in\{t,b\}$, we describe the combinatorial datum  
$
\widetilde{\pi}=(\widetilde{\pi}_t,\widetilde{\pi}_b)
$, 
where 
$
\widetilde{\pi}:=R^\epsilon(\pi)
$.

\begin{description}
\item
[Top operation]
The letter $\alpha_t$ is said the \emph{winner} of the top operation $R^t$ and $\alpha_b$ is said the \emph{looser}. The top operation 
$
\widetilde{\pi}:=R^t(\pi)
$ 
leaves invariant $\pi_t$, that is 
$
\widetilde{\pi}_t:=\pi_t
$, 
and its action on $\pi_b$ is defined by
\begin{eqnarray*}
&&
\widetilde{\pi}_b(\chi):=\pi_b(\chi)
\quad
\textrm{ for }
\quad
1\leq\pi_b(\chi)\leq\pi_b(\alpha_t)
\\
&&
\widetilde{\pi}_b(\alpha_b):=\pi_b(\alpha_t)+1
\\
&&
\widetilde{\pi}_b(\chi):=\pi_b(\chi)+1
\quad
\textrm{ for }
\quad
\pi_b(\alpha_t)+1\leq\pi_b(\chi)\leq d-1.
\end{eqnarray*}
\item
[Bottom operation]
The letter $\alpha_b$ is said the \emph{winner} of the bottom operation $R^b$ and $\alpha_t$ is said the \emph{looser}. The top operation 
$
\widetilde{\pi}:=R^b(\pi)
$ 
leaves invariant $\pi_b$, that is 
$
\widetilde{\pi}_b:=\pi_b
$, 
and its action on $\pi_t$ is defined by
\begin{eqnarray*}
&&
\widetilde{\pi}_t(\chi):=\pi_t(\chi)
\quad
\textrm{ for }
\quad
1\leq\pi_t(\chi)\leq\pi_t(\alpha_b)
\\
&&
\widetilde{\pi}_t(\alpha_t):=\pi_t(\alpha_b)+1
\\
&&
\widetilde{\pi}_t(\chi):=\pi_t(\chi)+1
\quad
\textrm{ for }
\quad
\pi_t(\alpha_b)+1\leq\pi_t(\chi)\leq d-1.
\end{eqnarray*}
\end{description}

It is easy to check that both $R^b(\pi)$ and $R^t(\pi)$ are admissible if $\pi$ is. A \emph{Rauzy class} $\cR$ is a set of admissible combinatorial data which is invariant both under $R^t$ and $R^b$ and which is minimal with such property. The \emph{Rauzy diagram} $\cD$ is the connected oriented graph whose vertexes are the elements of $\cR$ and whose elementary oriented arcs $\gamma$, or \emph{arrows}, correspond to Rauzy elementary operations. The set of arrows $\gamma$ of $\cD$ is in bijection with the set of pairs $(\pi,\epsilon)$ with $\pi\in\cR$ and $\epsilon\in\{t,b\}$. A concatenation of $r$ compatible arrows $\gamma_1,\dots,\gamma_r$ in a Rauzy diagram is called a \emph{Rauzy path} and is denoted $\gamma=\gamma_1\ast\dots\ast\gamma_r$. If $\pi,\pi'$ in $\cR$ are respectively the initial and the final combinatorial data in such chain of combinatorial operations, we write also $\gamma:\pi\to\pi'$. If a path $\gamma$ is concatenation of $r$ simple arrows, we say that $\gamma$ has length $r$. Length one paths are arrows. Elements of $\cR$ are identified with trivial length-zero paths. Let 
$
\{e_\chi\}_{\chi\in\cA}
$ 
be the canonical basis of $\RR^\cA$. For any finite path $\gamma$ define a linear map $B_\gamma\in\slgroup(d,\ZZ)$ as follows. If $\gamma$ is trivial then $B_\gamma:=id$. If $\gamma$ is an arrow with winner $\alpha$ and loser $\beta$ set
$$
B_\gamma e_\alpha=e_\alpha+e_\beta
\quad
\textrm{ and }
\quad
B_\gamma e_\chi=e_\chi
\textrm{ for }
\chi\not=\alpha.
$$
Then extend the definition to paths so that for any concatenation $\gamma_1\ast\gamma_2$ we have 
$$
B_{\gamma_1\ast\gamma_2}=B_{\gamma_2}\cdot B_{\gamma_1}.
$$

\subsection{The Rauzy induction map}
\label{SectionRauzyInductionGIETs}

Fix an admissible combinatorial datum $\pi$ over $\cA$, a right-open interval $I$ and a map 
$f\in\cG(\pi,I)$. Consider the two corresponding partitions 
$
\cP_t=\{I^t_\alpha\}_{\alpha\in\cA}
$ 
and 
$
\cP_b=\{I^b_\alpha\}_{\alpha\in\cA}
$ 
of $I$. It is practical to keep track of the dependence on $f$ of such partitions and their atoms, so we write
\begin{equation}
\label{EquationDynamicalPartition}
\cP_t(f)=\{I^t_\alpha(f)\}_{\alpha\in\cA}
\quad
\textrm{ and }
\quad
\cP_b(f)=\{I^b_\alpha(f)\}_{\alpha\in\cA}.
\end{equation}
For $\alpha\in\cA$, consider the critical points 
$
u_\alpha^t=u_\alpha^t(f)
$ 
of $f$ and the critical values 
$
u_\alpha^b=u_\alpha^b(f)
$. 
In particular, observe that if $f=T(\pi,\lambda)$ is the IET defined by the  combinatorial-length data $(\pi,\lambda)$ and acting on the interval $I=[0,\sum_\chi\lambda_\chi)$, then we have
$$
u_\alpha^t(f)=
\sum_{\pi_t(\chi)\leq\pi_t(\alpha)-1}\lambda_\chi
\quad
\textrm{ and }
\quad
u_\alpha^b(f)=
\sum_{\pi_b(\chi)\leq\pi_b(\alpha)-1}\lambda_\chi.
$$
As in \S~\ref{SectionRauzyClasses}, let $\alpha_t$ and $\alpha_b$ be the letters with $\pi_t(\alpha_t)=\pi_b(\alpha_b)=d$. Suppose that the following condition is satisfied
\begin{equation}
\label{EquationConditionRenormalizableGIET}
u_{\alpha_t}^t(f)\not=u_{\alpha_b}^b(f).
\end{equation}
Assign a value to the variable 
$
\epsilon=\epsilon(f)\in\{t,b\}
$ 
according to the two cases below: 
\begin{eqnarray*}
&&
\epsilon(f):=t
\quad
\Leftrightarrow
\quad
u_{\alpha_t}^t(f)<u_{\alpha_b}^b(f)
\\
&&
\epsilon(f):=b
\quad
\Leftrightarrow
\quad
u_{\alpha_t}^t(f)>u_{\alpha_b}^b(f),
\end{eqnarray*}
In particular for $f=T(\pi,\lambda)$ we have 
$
\epsilon(f)=t\Leftrightarrow\lambda_{\alpha_t}>\lambda_{\alpha_b}
$ 
and 
$
\epsilon(f)=b\Leftrightarrow\lambda_{\alpha_t}<\lambda_{\alpha_b}
$. 

\medskip

Let $\widetilde{\pi}:=R^{\epsilon(f)}(\pi)$ be the combinatorial datum and 
$
\gamma:\pi\to\widetilde{\pi}
$ 
be the arrow in the Rauzy diagram $\cD$ corresponding to the pair 
$
(\pi,\epsilon=\epsilon(f))
$ 
as in \S~\ref{SectionRauzyClasses}. Let also $B_\gamma\in\slgroup(d,\ZZ)$ be the matrix associated to $\gamma$ as in \S~\ref{SectionRauzyClasses}. Let $\widetilde{I}\subset I$ be the subinterval defined by 
$$
\inf\widetilde{I}=\inf I
\quad
\textrm{ and }
\quad
\sup\widetilde{I}:=\max\{u_{\alpha_t}^t(f),u_{\alpha_b}^b(f)\}.
$$

Let $\widetilde{f}:\widetilde{I}\to\widetilde{I}$ be the first return map of $f$ to $\widetilde{I}$. Recall that for any $\alpha\in\cA$ we denote by $f_\alpha$ the corresponding continuous branch defined by Equation~\eqref{EquationPiecewiseHomeomorphism}. The explicit expression for $\widetilde{f}$ is given according to the two cases below. 
\begin{enumerate}
\item
If $\epsilon(f)=t$ then, observing that 
$
I^b_{\alpha_b}=f_{\alpha_b}(I^t_{\alpha_b})\subset I^t_{\alpha_t}
$, 
we have
\begin{align*}
&
\widetilde{f}(x)=f^2(x)=f_{\alpha_t}\circ f_{\alpha_b}(x)
&
\quad
\textrm{ for any }
\quad
x\in I_{\alpha_b}^t
\\
&
\widetilde{f}(x)=f(x)
&
\quad
\textrm{ for any }
\quad
x\in\widetilde{I}\setminus I_{\alpha_b}^t.
\end{align*}
\item
If $\epsilon(f)=b$ then, observing that  
$
I_{\alpha_t}^t\subset I_{\alpha_b}^b=f_{\alpha_b}(I_{\alpha_b})
$, 
we have
\begin{align*}
&
\widetilde{f}(x)=f^2(x)=f_{\alpha_t}\circ f_{\alpha_b}(x)
&
\quad
\textrm{ for any }
\quad
x\in f_{\alpha_b}^{-1}(I_{\alpha_t}^t) 
\\
&
\widetilde{f}(x)=f(x)
&
\quad
\textrm{ for any }
\quad
x\in\widetilde{I}\setminus f_{\alpha_b}^{-1}(I_{\alpha_t}^t).
\end{align*}
\end{enumerate}

It is easy to see that $\widetilde{f}$ is a GIET and more precisely  
$
\widetilde{f}\in\cG(\widetilde{\pi},\widetilde{I})
$, 
where 
$
\widetilde{\pi}=R^{\epsilon(f)}(\pi)
$, 
so that we have a map
$$
\cQ:f\mapsto \cQ(f):=\widetilde{f},
$$
called \emph{Rauzy map}. The map $\cQ$ preserves the set of IETs. In particular for $f=T(\pi,\lambda)$ we have 
$
\widetilde{f}=T(\widetilde{\pi},\widetilde{\lambda})
$,  
where the new length datum is given by 
$
\widetilde{\lambda}=^t\!B_\gamma^{-1}\lambda
$. 
Explicitly, setting $1-\epsilon:=b$ if $\epsilon=t$ and otherwise $1-\epsilon:=t$ if $\epsilon=b$, we have
\begin{eqnarray*}
&&
\widetilde{\lambda}_{\alpha}=\lambda_{\alpha}
\quad
\textrm{ if }
\quad
\alpha\neq\alpha_{\epsilon}
\\
&&
\widetilde{\lambda}_{\alpha_{\epsilon}}=
\lambda_{\alpha_{\epsilon}}-\lambda_{\alpha_{1-\epsilon}}.
\end{eqnarray*}

\subsection{Iteration of the Rauzy induction map}
\label{SectionIterationRauzyInductionGIETs}

Let $\cR$ be a Rauzy class over $\cA$ and consider $\pi\in\cR$. Fix $f\in\cG(\pi,I)$ and $r\in\NN$ and assume that the $r$-th iterated $f^{(r)}:=\cQ^r(f)$ of the Rauzy map is defined on $f=f^{(0)}$. Let $\pi^{(r)}\in\cR$ be the combinatorial datum of $f^{(r)}$ and $I^{(r)}$ be the interval where $f^{(r)}$ acts, so that in our notation we write 
$
f^{(r)}\in\cG(\pi^{(r)},I^{(r)})
$, 
which means that the map $f^{(r)}:I^{(r)}\to I^{(r)}$ is a GIET with combinatorial datum $\pi^{(r)}$. Finally let $\alpha^{(r)}_t$ and $\alpha^{(r)}_b$ be the letters such that 
$
\pi^{(r)}_t(\alpha^{(r)}_t)=\pi^{(r)}_b(\alpha^{(r)}_b)=d
$ 
and assume that $f^{(r)}$ satisfies Condition~\eqref{EquationConditionRenormalizableGIET}, that is 
$$
u^t_{\alpha^{(r)}_t}(f^{(r)})\not=u^b_{\alpha^{(r)}_b}(f^{(r)}).
$$
Then the interval $I^{(r+1)}\subset I^{(r)}$ and the map 
$
f^{(r+1)}=\cQ(f^{(r)})\in\cG(\pi^{(r+1)},I^{(r+1)})
$ 
are defined inductively as in \S~\ref{SectionRauzyInductionGIETs}, that is
\begin{eqnarray*}
&&
\inf I^{(r+1)}=\inf I^{(r)}
\quad
\textrm{ and }
\quad
\sup I^{(r+1)}:=
\max\{u^t_{\alpha^{(r)}_t}(f^{(r)}),u^b_{\alpha^{(r)}_b}(f^{(r)})\}
\\
&&
\pi^{(r+1)}:=R^{\epsilon(r)}(\pi^{(r)})
\quad
\textrm{ where }
\quad
\epsilon(r)=\epsilon(f^{(r)}).
\end{eqnarray*}
Moreover the operation above is encoded by the arrow 
$
\gamma_{r+1}:\pi^{(r)}\to\pi^{(r+1)}
$ 
in the Rauzy diagram corresponding to the data 
$
(\pi^{(r)},\epsilon(f^{(r)}))
$.

\begin{definition}
\label{DefinitionRauzyPathOfGIET}
Consider $f\in\cG(\pi,I)$.
\begin{enumerate}
\item
The map $f$ is said \emph{infinitely renormalizable} if $f^{(r)}$ satisfies Condition~\eqref{EquationConditionRenormalizableGIET} for any $r\in\NN$ and moreover, for any $\alpha\in\cA$, there exists infinitely many $r$ such that $\alpha$ is the winner of the arrow $\gamma_r$.
\item
The \emph{Rauzy path} 
$
\gamma(f,r):\pi^{(0)}\to\pi^{(r)}
$ 
of $f$ up to time $r$ is the path in the Rauzy diagram $\cD$ obtained by concatenation of the arrows above, that is
$$
\gamma(f,r):=
\gamma_1\ast\dots\ast\gamma_r.
$$
\item
The infinite Rauzy path of $f$ is the infinite concatenation 
$
\gamma(f,+\infty)=\gamma_1\ast\gamma_2\ast\dots
$. 
According to Point (1) above, we say that such path is \emph{infinite complete}. 
\end{enumerate}
\end{definition}

Observe that 
$
B_{\gamma_1\ast\dots\ast\gamma_r}=B_{\gamma_r}\cdot\dots\cdot B_{\gamma_1}
$ 
according to its definition in \S~\ref{SectionRauzyClasses}. Therefore, if $f=T(\pi,\lambda)$ is the IET defined by combinatorial-length data $(\pi,\lambda)$, then according to \S~\ref{SectionRauzyInductionGIETs} we have 
$
f^{(r)}=T(\pi^{(r)},\lambda^{(r)})
$ 
for any $r\in\NN$, where the $r$-th length datum is given by
\begin{equation}
\label{EquationLinearActionLengthData}
\lambda^{(r)}=^t\!B_{\gamma_1\ast\dots\ast\gamma_r}^{-1}(\lambda).
\end{equation}
The space of IETs with combinatorial datum in $\cR$ is identified with 
$
\cR\times\RR_+^\cA
$ 
and according to Equation~\eqref{EquationLinearActionLengthData} the Rauzy map $\cQ$ acts as a piecewise linear map on this space. More precisely, fix $r\in\NN$ and let 
$
\gamma=\gamma_1\ast\dots\ast\gamma_r
$ 
be the concatenation of $r$ Rauzy arrows, where 
$
\gamma_k:\pi^{(k-1)}\to\pi^{(k)}
$ 
for any $k=1,\dots,r$, then set $\pi:=\pi^{(0)}$ and $\pi':=\pi^{(r)}$, so that $\gamma:\pi\to\pi'$. Observe that all entries of $B_\gamma$ are non-negative and define the simplicial sub-cone 
$
\cC_\gamma:=^t\!B_\gamma(\RR_{+}^{\cA})\subset\RR^\cA
$.  
The $r$-th iterate $\cQ^r$ is defined on any 
$
f=T(\pi,\lambda)\in\{\pi\}\times\cC_\gamma
$ 
and its restriction to such set coincides is given by the linear map
$$
\cQ^r:\{\pi\}\times\cC_\gamma \to \{\pi'\}\times\RR^\cA_+ 
\quad
\textrm{ , }
\quad
f=T(\pi,\lambda)\mapsto f^{(r)}=T(\pi',^t\!B_\gamma^{-1}\lambda).
$$
The set of IETs $f=T(\pi,\lambda)$ where $\cQ^r$ is not defined is the union of the sets  
$
\{\pi\}\times \partial\cC_\gamma
$ 
over all $\pi\in\cR$ and all Rauzy paths $\gamma:\pi\to\pi'$ of length $r$, where $\partial\cC_\gamma$ denotes the boundary of $\cC_\gamma$. Therefore infinitely renormalizable IETs form a set with full Lebesgue measure, its complement being contained into a countable union of hyperplanes. The following combinatorial characterization holds (for a proof see Corollary 4 at page 40 in  \cite{YoccozCollege}).

\begin{lemma}
\label{LemmaConnectionsRenormalizability}
Let $T=T(\pi,\lambda)$ be the IET defined by combinatorial-length data $(\pi,\lambda)$. Then $T$ is infinitely renormalizable if any only if it does
not have \emph{connections}, that is triples $(\beta,\alpha,n)$ with $n\geq 0$ and $\pi_b(\beta)\geq2$, $\pi_t(\alpha)\geq2$ such that 
$$
T^n(u_\beta^b)=u_\alpha^t.
$$
\end{lemma}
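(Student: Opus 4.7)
My plan is to prove both implications via an inductive identification of the critical structure of $T^{(r)}$. \emph{Key combinatorial fact:} I would show by induction on $r$ that every critical value $u^b_{\alpha'}(T^{(r)})$ of $T^{(r)}$ with $\pi^{(r)}_b(\alpha') \geq 2$ equals a forward iterate $T^k(u^b_\beta)$ (for some $k \geq 0$) of a critical value of $T$ with $\pi_b(\beta) \geq 2$, and symmetrically every critical point $u^t_{\alpha'}(T^{(r)})$ with $\pi^{(r)}_t(\alpha') \geq 2$ equals a backward iterate $T^{-m}(u^t_\alpha)$ of a critical point of $T$ with $\pi_t(\alpha) \geq 2$. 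The inductive step is read off the explicit description of $T^{(r+1)}$ in \S~\ref{SectionRauzyInductionGIETs}: a Rauzy step with $\epsilon=t$ leaves the set of critical points of $T^{(r)}$ unchanged and replaces only the rightmost critical value $u^b_{\alpha^{(r)}_b}(T^{(r)})$ by its $T^{(r)}$-image (hence by a further $T$-iterate of the same original critical value); the $\epsilon=b$ case is symmetric. The condition $\pi_b(\beta), \pi_t(\alpha) \geq 2$ persists because position $1$ of each permutation is invariant under $R^t$ and $R^b$.

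\emph{No connections implies infinitely renormalizable.} The key fact immediately yields that Condition~\eqref{EquationConditionRenormalizableGIET} must hold at every $r$: a failure $u^t_{\alpha^{(r)}_t}(T^{(r)}) = u^b_{\alpha^{(r)}_b}(T^{(r)})$ translates into $T^{-m}(u^t_\alpha) = T^k(u^b_\beta)$, i.e., the forbidden connection $T^{m+k}(u^b_\beta) = u^t_\alpha$ with $\pi_t(\alpha), \pi_b(\beta) \geq 2$. For the remaining completeness requirement that every letter wins infinitely often, I would argue by contradiction: if a letter $\alpha_0$ stops winning after some step $r_0$, then $\lambda^{(r)}_{\alpha_0}$ is constant for $r \geq r_0$ (only the winner's length decreases under $\lambda \mapsto {}^t\! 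B_\gamma^{-1}\lambda$). The bounded sequence $\lambda^{(r)}$ then has an accumulation point $\lambda^{(\infty)}$ which, along a subsequence of constant combinatorial datum $\pi^{(\infty)}$, is fixed by a closed Rauzy sub-loop at $\pi^{(\infty)}$; analyzing the limit IET $T(\pi^{(\infty)}, \lambda^{(\infty)})$ together with this periodic structure produces a critical coincidence that pulls back to a genuine connection of the original $T$.

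\emph{Connection implies not infinitely renormalizable.} This is simply the contrapositive of the first part of the previous paragraph: starting from a minimal connection $T^{n_0}(u^b_\beta) = u^t_\alpha$ and assuming Rauzy induction is defined at every step, one tracks the orbit $u^b_\beta, T(u^b_\beta), \ldots, T^{n_0}(u^b_\beta)$ through the shrinking intervals $I^{(r)}$; the minimality of $n_0$ ensures that at some specific $r^*$ a forward iterate of $u^b_\beta$ and a backward iterate of $u^t_\alpha$ coincide as the rightmost critical value and critical point of $T^{(r^*)}$, violating Condition~\eqref{EquationConditionRenormalizableGIET}. The main technical obstacle lies in the completeness half of the forward direction — rigorously producing a connection of $T$ from the hypothesis that some letter never wins — which requires careful analysis of the projective dynamics of the co-cycle $B_\gamma$ restricted to the sub-diagram of Rauzy arrows avoiding winner $\alpha_0$; this is standard Rauzy-Veech theory but technically delicate, which is why the authors prefer to cite it.
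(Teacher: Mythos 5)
The paper does not actually prove Lemma~\ref{LemmaConnectionsRenormalizability}: it cites Corollary~4, p.~40 of Yoccoz's \emph{\'Echanges d'intervalles} lecture notes. So your argument is a genuinely independent attempt, and several of your observations are correct and worth recording. In particular, the ``key combinatorial fact'' you prove by induction --- that every critical value $u^b_{\alpha'}(T^{(r)})$ with $\pi^{(r)}_b(\alpha')\geq 2$ is a forward $T$-iterate of some $u^b_\beta$ with $\pi_b(\beta)\geq 2$, and dually for critical points, with position $1$ preserved by $R^t$ and $R^b$ --- is exactly right and follows cleanly from the explicit description of the Rauzy step in \S~\ref{SectionRauzyInductionGIETs}. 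From it you correctly deduce that if $T$ has no connections then Condition~\eqref{EquationConditionRenormalizableGIET} never fails.

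However, both of the remaining implications are only sketches with real gaps. For the completeness half of the forward direction (no connections $\Rightarrow$ every letter wins infinitely often), the accumulation-point argument is not carried out: you would need to show, for instance, that $|I^{(r)}|\to 0$ whenever the induction never stops, or explicitly exhibit how the periodic limit behaviour along the closed sub-loop pulls back to a connection of $T$. This is precisely the nontrivial content of the cited Yoccoz corollary; asserting that it ``produces a critical coincidence'' without saying what that coincidence is does not close the argument. You acknowledge this yourself, but the statement you are proving cannot be closed without it.

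For the converse there is also a logical slip: ``connection $\Rightarrow$ not infinitely renormalizable'' is the \emph{converse}, not the contrapositive, of the implication you established (``\eqref{EquationConditionRenormalizableGIET} fails $\Rightarrow$ connection''), so the leading sentence of that paragraph is incorrect even if you then attempt a direct argument. The direct argument you sketch (tracking the orbit of $u^b_\beta$ through the shrinking intervals until a forward iterate of $u^b_\beta$ and a backward iterate of $u^t_\alpha$ occupy the last top and bottom positions simultaneously) is the right idea, but to make it rigorous you again need to know that the $I^{(r)}$ shrink past the finitely many points $u^b_\beta, T(u^b_\beta),\dots,T^{n_0}(u^b_\beta)=u^t_\alpha$, i.e.\ $|I^{(r)}|\to 0$, which is the same nontrivial fact used in the other gap. ``Minimality of $n_0$'' alone does not yield the required coincidence of rightmost critical point and critical value. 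In short: your combinatorial bookkeeping is correct and gives a clean proof of one implication, but the two hard steps are exactly where the work lies and remain unproved; the paper sidesteps this by citing Yoccoz.
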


\subsection{Dynamically defined partitions}
\label{SectionDynamicalPartition}

Fix $\pi\in\cR$ and let 
$
\gamma=\gamma_1\ast\dots\ast\gamma_r
$ 
be a finite Rauzy path of length $r$. Let $B_\gamma\in\slgroup(d,\ZZ)$ be the matrix defined in \S~\ref{SectionRauzyClasses}. Denote by 
$
\vec{1}\in\NN^\cA
$ 
the vector with all entries equal to $1$, define the integer vector 
$
q^{(r)}:=B_\gamma\vec{1}\in\NN^\cA
$, 
and for $\alpha\in\cA$ let $q^{(r)}_\alpha$ be its $\alpha$-entry. Denoting by $[A]_{\alpha,\beta}$ the entry of a matrix $A$ in row $\alpha$ and column $\beta$, we have
$$
q^{(r)}_\alpha=
\sum_{\chi\in\cA}[B_\gamma]_{\alpha,\chi}.
$$
Fix $f\in\cG(\pi,I)$ and $r\in\NN$ such that  
$
f^{(k)}:I^{(k)}\to I^{(k)}
$ 
satisfies Condition~\eqref{EquationConditionRenormalizableGIET} for any $k=0,\dots,r$. In particular in our notation 
$
f^{(r)}\in\cG(\pi^{(r)},I^{(r)})
$. 
Let 
$
\cP_t(f^{(r)})=\{I^t_\alpha(f^{(r)})\}_{\alpha\in\cA}
$ 
be the partition of $I^{(r)}$ associated to $f^{(r)}$ as in Equation~\eqref{EquationDynamicalPartition}, whose atoms are the sub-intervals 
$
I^t_\alpha(f^{(r)})\subset I^{(r)}
$ 
where the restriction of $f^{(r)}$ is an orientation preserving homeomorphism as in Equation~\eqref{EquationPiecewiseHomeomorphism}. In particular, if $f=T(\pi,\lambda)$ is the IET defined by combinatorial-length data $(\pi,\lambda)$ then for any $\alpha\in\cA$ we have 
$$
I^t_\alpha(f^{(r)})=
\bigg[
\sum_{\pi^{(r)}_t(\chi)\leq\pi^{(r)}_t(\alpha)-1}
\lambda^{(r)}_\chi
,
\sum_{\pi^{(r)}_t(\chi)\leq\pi^{(r)}_t(\alpha)}
\lambda^{(r)}_\chi
\bigg).
$$ 
Let $\gamma=\gamma(f,r)$ be the Rauzy path as in Definition~\ref{DefinitionRauzyPathOfGIET} and set 
$
q^{(r)}:=B_\gamma\vec{1}
$, 
which of course depends both on $f$ and on $r$. Define the intervals 

\begin{equation}
\label{EquationDynamicalPartitionHigherOrder}
I(f,r,\alpha,i):=f^i\big(I^t_\alpha(f^{(r)})\big)
\quad
\textrm{ for }
\quad
\alpha\in\cA
\quad,\quad
0\leq i\leq q^{(r)}_\alpha-1.
\end{equation}

The following Lemma is a classical fact, a proof of which can be found in \S~7.5 in \cite{YoccozClay}.

\begin{lemma}
\label{LemmaIntervalsPartitionAndMatrix}
The following holds.
\begin{enumerate}
\item
For any $\alpha\in\cA$ we have
$
f^{(r)}|_{I^t_\alpha(f,r)}=f^{q^{(r)}_\alpha}
$ 
and moreover
$$
q^{(r)}_\alpha=
\min
\left\{
i\geq1\quad;\quad f^i\big(I^t_\alpha(f^{(r)})\big)\subset I^{(r)}
\right\}.
$$
\item
For any $\alpha$ and $\beta$ we have
$$
[B_\gamma]_{\alpha,\beta}=
\sharp
\left\{
0\leq i\leq q^{(r)}_\alpha-1
\quad;\quad
I(f,r,\alpha,i)\subset I^t_\beta
\right\}.
$$
\end{enumerate}
\end{lemma}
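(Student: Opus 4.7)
The plan is a straightforward induction on $r$, exploiting the two-level recursive nature of Rauzy induction. For $r=0$ the path $\gamma$ is trivial, $B_\gamma = \id$, $q^{(0)}_\alpha = 1$ for every $\alpha$, and both claims reduce to the tautologies $f^{(0)} = f$ on $I^t_\alpha(f)$ and $I(f,0,\alpha,0) = I^t_\alpha(f) \subset I^t_\beta(f)$ if and only if $\alpha = \beta$.

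For the inductive step, assume the lemma for $r$ and treat the case $\epsilon(r) = t$ (the case $\epsilon(r) = b$ is entirely symmetric). Abbreviate $\alpha_t := \alpha_t^{(r)}$ and $\alpha_b := \alpha_b^{(r)}$. The description of $f^{(r+1)}$ in \S~\ref{SectionRauzyInductionGIETs} shows that the new top partition satisfies $I^t_\alpha(f^{(r+1)}) = I^t_\alpha(f^{(r)})$ for every $\alpha \neq \alpha_t$, while $I^t_{\alpha_t}(f^{(r+1)}) = I^t_{\alpha_t}(f^{(r)}) \setminus I^b_{\alpha_b}(f^{(r)})$, and $f^{(r+1)}$ acts as $f^{(r)} \circ f^{(r)}$ on $I^t_{\alpha_b}(f^{(r)})$ and as $f^{(r)}$ elsewhere. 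From the cocycle relation $B_{\gamma(f,r+1)} = B_{\gamma_{r+1}} \cdot B_{\gamma(f,r)}$ and the explicit form of $B_{\gamma_{r+1}}$ recalled in \S~\ref{SectionRauzyClasses}, the row of $B_{\gamma_{r+1}}$ indexed by $\alpha_b$ is $e_{\alpha_b}^T + e_{\alpha_t}^T$ while all other rows are $e_\alpha^T$, so that $q^{(r+1)}_\alpha = q^{(r)}_\alpha$ for $\alpha \neq \alpha_b$ and $q^{(r+1)}_{\alpha_b} = q^{(r)}_{\alpha_b} + q^{(r)}_{\alpha_t}$. To deduce Point (1), for $\alpha \neq \alpha_b$ the relation $f^{(r+1)}|_{I^t_\alpha(f^{(r+1)})} = f^{(r)}|_{I^t_\alpha(f^{(r+1)})}$ combined with $I^t_\alpha(f^{(r+1)}) \subset I^t_\alpha(f^{(r)})$ and the induction hypothesis gives $f^{q^{(r)}_\alpha} = f^{q^{(r+1)}_\alpha}$, while for $\alpha = \alpha_b$ the composition $f^{(r)} \circ f^{(r)}$ on $I^t_{\alpha_b}(f^{(r)})$ first applies $f^{q^{(r)}_{\alpha_b}}$ mapping into $I^b_{\alpha_b}(f^{(r)}) \subset I^t_{\alpha_t}(f^{(r)})$, then applies $f^{q^{(r)}_{\alpha_t}}$, yielding $f^{q^{(r+1)}_{\alpha_b}}$. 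The minimality of $q^{(r+1)}_\alpha$ as first return time to $I^{(r+1)}$ comes for free, since $f^{(r+1)}$ is by construction the first return of $f^{(r)}$, and inductively of $f$, to $I^{(r+1)}$.

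For Point (2), observe first that Point (1) forces each $I(f,r,\alpha,i)$ with $0 \leq i < q^{(r)}_\alpha$ to lie in a single top atom $I^t_\beta(f)$, since $f$ must be a single continuous branch on it. For $\alpha \neq \alpha_b$, the new tower $\{I(f,r+1,\alpha,i)\}_{i=0}^{q^{(r+1)}_\alpha - 1}$ sits intervalwise inside the old one (equal for $\alpha \neq \alpha_t$, a subtower for $\alpha = \alpha_t$), so the visit counts to each $I^t_\beta(f)$ are unchanged, which matches the fact that the row $\alpha$ of $B_{\gamma_{r+1}} B_{\gamma(f,r)}$ equals the row $\alpha$ of $B_{\gamma(f,r)}$. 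For $\alpha = \alpha_b$, the new tower splits as the old tower of height $q^{(r)}_{\alpha_b}$ over $\alpha_b$, followed by the forward $f$-orbit $\{f^j(I^b_{\alpha_b}(f^{(r)}))\}_{j=0}^{q^{(r)}_{\alpha_t}-1}$; since $I^b_{\alpha_b}(f^{(r)}) \subset I^t_{\alpha_t}(f^{(r)})$, each interval of the latter orbit is contained in the corresponding $I(f,r,\alpha_t,j)$ and hence in the same $I^t_\beta(f)$. Summing the two contributions gives $[B_{\gamma(f,r)}]_{\alpha_b,\beta} + [B_{\gamma(f,r)}]_{\alpha_t,\beta}$, which is precisely row $\alpha_b$ of $B_{\gamma_{r+1}} B_{\gamma(f,r)}$.

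The main difficulty is not analytical but bookkeeping: keeping track of which top atoms get truncated versus preserved in one Rauzy step, and verifying that these partition changes line up with the elementary matrix $B_{\gamma_{r+1}}$. Once the two transformations are aligned, both Points (1) and (2) propagate by plain concatenation; the case $\epsilon(r) = b$ is handled by swapping the roles of the top and bottom data throughout.
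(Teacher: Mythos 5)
The paper does not supply a proof of this lemma; it cites it as a classical fact and defers to \S 7.5 of \cite{YoccozClay}. Your inductive argument is correct and is, in essence, the standard proof (the same one used in Yoccoz's notes): one step of Rauzy induction either leaves the Rokhlin towers over the top atoms unchanged, thins the tower over the winner's base without changing its height, or stacks the loser's tower and the winner's tower, and these three behaviors match exactly the row structure of the elementary matrix $B_{\gamma_{r+1}}$ in the cocycle relation $B_{\gamma(f,r+1)} = B_{\gamma_{r+1}} B_{\gamma(f,r)}$. Two small points worth tightening in a final write-up: (i) the claim that minimality ``comes for free'' rests on the standard but unstated fact that the first return to $K\subset J$ of the first return to $J$ is the first return to $K$, which is what makes $f^{(r)}$ the first return of $f$ (not only of $f^{(r-1)}$) to $I^{(r)}$; and (ii) your preliminary observation that each $I(f,r,\alpha,i)$ lies in a single top atom is slightly informal as phrased, but it is in any case re-established directly inside your inductive step for Point (2), so nothing is missing.
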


Since $f:I\to I$ is a bijection and $f^{(r)}:I^{(r)}\to I^{(r)}$ is the first return of $f$ to 
$
I^{(r)}\subset I
$, 
then Lemma~\ref{LemmaIntervalsPartitionAndMatrix} implies that the right-open intervals $I(f,r,\alpha,i)$ defined above form a partition of $I$, that we denote by $\cP(f,r)$. Explicitly: 
$$
\cP(f,r):=
\left\{
I(f,r,\alpha,i)
\quad;\quad
\alpha\in\cA
\quad;\quad
0\leq i\leq q^{(r)}_\alpha-1
\right\}.
$$
The partition above is called the \emph{dynamical partition} of order $r$, and is a refinement of the partition $\cP_t(f)$ in Equation~\eqref{EquationDynamicalPartition}, which in our terminology corresponds to the dynamical partition of order $0$. Letting $f$ vary in $\cG(\pi,I)$ we obtain different partitions of $I$ of order $r$. For $f,f'\in\cG(\pi,I)$ we say that two dynamical partitions 
$\cP(f,r)$ and $\cP(f',r)$ are \emph{combinatorially equivalent}, and we write 
$
\cP(f,r)\sim\cP(f',r)
$, 
if the intervals $I(f,r,\alpha,i)$ in $\cP(f,r)$ are as in Equation~\eqref{EquationDynamicalPartitionHigherOrder}, moreover the intervals $I(f',r,\alpha,i)$ in $\cP(f',r)$ are labeled by the same indices $\alpha$ and $i$ as for $f$ and we have 
\begin{equation}
\label{EquationDynamicalPartitionHigherOrderEquivalence}
I(f',r,\alpha,i)=
\varphi\big(I(f,r,\alpha,i)\big)
\quad
\textrm{ for any }
\quad
\alpha\in\cA
\quad,\quad
0\leq i\leq q^{(r)}_\alpha-1,
\end{equation}
where $\varphi:I\to I$ is an increasing homeomorphism.  In other words the intervals in the two partitions have the same labels in the same order. Considering the inverse and the composition of increasing homeomorphisms of $I$, it is easy to see that the relation above is an equivalence relation.

\begin{proposition}
\label{PropositionPartitionDeterminesRauzyPath}
Let $f$ and $f'$ be two elements of $\cG(\pi,I)$ and fix $r\in\NN$. Then we have the equivalence 
$$
\cP(f,r)\sim\cP(f',r)
\quad
\Leftrightarrow
\quad
\gamma(f,r)=\gamma(f',r).
$$
\end{proposition}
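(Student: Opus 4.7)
The plan is to prove both implications by simultaneous induction on $r$. The base case $r=0$ is immediate: $\cP(f,0)=\cP_t(f)=\{I^t_\alpha(f)\}_{\alpha\in\cA}$ is a partition of $I$ into $d$ intervals labeled by $\cA$ and ordered according to $\pi_t$; since $f,f'\in\cG(\pi,I)$ share the combinatorial datum $\pi$, the canonical label-preserving increasing homeomorphism between their intervals furnishes the equivalence $\cP(f,0)\sim\cP(f',0)$, while $\gamma(f,0)=\gamma(f',0)=\pi$ is the trivial path; both sides of the biconditional hold automatically.

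For the inductive step, the key fact that I would isolate as a lemma is that \emph{the transition from $\cP(f,r-1)$ to $\cP(f,r)$ is combinatorial and invertible}. More precisely, suppose $\gamma_r(f):\pi^{(r-1)}\to\pi^{(r)}$ is the $r$-th arrow and let $\epsilon=\epsilon(f^{(r-1)})\in\{t,b\}$, with winner $\alpha_\epsilon^{(r-1)}$ and loser $\alpha_{1-\epsilon}^{(r-1)}$. Using Lemma~\ref{LemmaIntervalsPartitionAndMatrix} and the explicit formula for the Rauzy map in \S~\ref{SectionRauzyInductionGIETs}, every interval $I(f,r-1,\alpha,i)$ with $\alpha\neq\alpha_{1-\epsilon}^{(r-1)}$ coincides with an interval of $\cP(f,r)$ carrying the same label $\alpha$ and the same index $i$, while the single orbit associated to the loser $\alpha_{1-\epsilon}^{(r-1)}$ is split and interleaved with the orbit of the winner in a way entirely dictated by the pair $(\pi^{(r-1)},\epsilon)$. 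In particular the combinatorial equivalence class of $\cP(f,r)$ is determined by that of $\cP(f,r-1)$ together with $\gamma_r$; this gives the implication $\gamma(f,r)=\gamma(f',r)\Rightarrow\cP(f,r)\sim\cP(f',r)$ upon combining with the inductive hypothesis.

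For the converse direction, I would show that both $\cP(f,r-1)$ (up to equivalence) and the arrow $\gamma_r(f)$ can be recovered from the combinatorial class of $\cP(f,r)$. The coarsening $\cP(f,r-1)$ is characterized dynamically: by Lemma~\ref{LemmaIntervalsPartitionAndMatrix}, the atom $I(f,r-1,\alpha,i)$ is the unique union of consecutive atoms of $\cP(f,r)$ whose images under the increasing label-preserving identification with $\cP(f',r)$ agree. The type $\epsilon$ of $\gamma_r$ is then detected by a local combinatorial criterion at the right endpoint of $I^{(r-1)}$, namely by whether the rightmost interval among those of $\cP(f,r-1)$ lying in $I^{(r-1)}$ carries the label $\alpha_t^{(r-1)}$ or $\alpha_b^{(r-1)}$; equivalently, one reads off which of the two last positions of $\pi^{(r-1)}$ corresponds to the split atom identified in the previous sentence. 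This yields $\cP(f,r)\sim\cP(f',r)\Rightarrow\cP(f,r-1)\sim\cP(f',r-1)$ and $\gamma_r(f)=\gamma_r(f')$, and the inductive hypothesis concludes by concatenation.

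The main obstacle I anticipate is formalizing the combinatorial recipe in the previous paragraph — namely, cleanly expressing how a single Rauzy step modifies the orbit structure at level $r-1$ to produce the orbit structure at level $r$, so that the modification can be recognized and inverted from the labeled order of atoms alone. This amounts to a careful bookkeeping exercise using the two explicit cases in \S~\ref{SectionRauzyInductionGIETs} together with the identity $q^{(r)}_{\alpha_\epsilon^{(r-1)}}=q^{(r-1)}_{\alpha_\epsilon^{(r-1)}}+q^{(r-1)}_{\alpha_{1-\epsilon}^{(r-1)}}$ (and $q^{(r)}_\alpha=q^{(r-1)}_\alpha$ otherwise), which translate the matrix identity $B_{\gamma(f,r)}=B_{\gamma_r}\cdot B_{\gamma(f,r-1)}$ into a statement about how atoms split. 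Once this step-by-step correspondence is pinned down, both implications follow by a routine induction.
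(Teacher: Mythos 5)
Your plan — induction on $r$ with the key lemma that the passage from $\cP(f,r-1)$ to $\cP(f,r)$ is combinatorial and invertible — is close in spirit to the paper's, which builds $\cP(f,r+1)$ out of $\cP(f,r)$ and $\cP(f^{(r)},1)$ and handles $r=1$ explicitly. However, you have the roles of winner and loser reversed in the combinatorial facts you rely on, and this breaks the bookkeeping you defer to.

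Concretely, with $\alpha_\epsilon^{(r-1)}$ the winner and $\alpha_{1-\epsilon}^{(r-1)}$ the loser of the $r$-th arrow, the definition of $B_{\gamma_r}$ gives $B_{\gamma_r}e_{\alpha_\epsilon}=e_{\alpha_\epsilon}+e_{\alpha_{1-\epsilon}}$, so multiplying $q^{(r-1)}$ on the left by $B_{\gamma_r}$ adds the \emph{winner's} entry to the \emph{loser's}. Hence
$$
q^{(r)}_{\alpha_{1-\epsilon}^{(r-1)}}=q^{(r-1)}_{\alpha_{1-\epsilon}^{(r-1)}}+q^{(r-1)}_{\alpha_\epsilon^{(r-1)}}
\quad\textrm{ and }\quad
q^{(r)}_\chi=q^{(r-1)}_\chi
\ \textrm{ for }\ \chi\ne\alpha_{1-\epsilon}^{(r-1)}:
$$
it is the \emph{loser's} return time that grows, not the winner's as you wrote. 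Correspondingly, the atoms $I(f,r-1,\alpha,i)$ with $\alpha\ne\alpha_\epsilon^{(r-1)}$ (all letters \emph{except the winner}) are the ones that coincide with atoms of $\cP(f,r)$; each atom of the \emph{winner's} level-$(r-1)$ orbit is the one that gets cut in two, the second piece becoming one of the new atoms appended to the loser's extended orbit. This is exactly what the paper's explicit $r=1$ description shows: for $\epsilon=t$, $q^{(1)}_{\alpha_b}=2$ and $I^t_{\alpha_t}(f)$ splits into $I(f,1,\alpha_t,0)\sqcup I(f,1,\alpha_b,1)$. You have this backwards throughout, so the ``careful bookkeeping'' based on your identities would not go through.

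Independently, the converse direction as you sketch it is circular: to identify which consecutive atoms of $\cP(f,r)$ coalesce into an atom of $\cP(f,r-1)$ you already need to know which letter was the winner, i.e.\ $\gamma_r$ itself; and the criterion you propose (whether the rightmost interval of $\cP(f,r-1)$ inside $I^{(r-1)}$ is labeled $\alpha_t^{(r-1)}$ or $\alpha_b^{(r-1)}$) does not discriminate, since that rightmost interval is always $I^t_{\alpha_t^{(r-1)}}(f^{(r-1)})$. The paper sidesteps this inversion entirely: it proves the $r=1$ case in both directions by simply noting that there are exactly two arrows out of any $\pi$ and exactly two combinatorial classes of order-$1$ partitions, so the correspondence is a bijection; it then passes to $r+1$ through the pair $\big(\cP(f,r),\cP(f^{(r)},1)\big)$ as a chain of equivalences, rather than arguing the two implications separately. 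You should either adopt a bijection-count argument of this kind for the converse, or supply an honest non-circular reconstruction — and in either case fix the winner/loser swap first.
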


\begin{proof}
We prove the Proposition by induction. For $r=0$ the statement is trivial. Consider $r=1$ and recall \S~\ref{SectionRauzyClasses} and \S~\ref{SectionRauzyInductionGIETs}. We have 
$
\gamma(f,1)=\gamma(f',1)
$ 
iff 
$
\epsilon(f)=\epsilon(f')
$. 
If $\epsilon=t$ then $q^{(1)}_\chi=2$ for $\chi=\alpha_b$ and $q^{(1)}_\chi=1$ for any 
$\chi\not=\alpha_b$, moreover 
\begin{eqnarray*}
&&
I(f,1,\chi,0)=I^t_\chi(f^{(1)})=I^t_\chi(f)
\quad
\textrm{ for }
\quad
\chi\not=\alpha_t
\\
&&
I(f,1,\alpha_t,0)=I^t_{\alpha_t}(f^{(1)})
=
I^t_{\alpha_t}(f)\setminus f\big(I^t_{\alpha_b}(f)\big)
\\
&&
I(f,1,\alpha_b,1)=f\big(I^t_{\alpha_b}(f)\big),
\end{eqnarray*}
and the same holds for $f'$, therefore $\cP(f,1)\sim\cP(f',1)$. On the other hand, if $\epsilon=b$ then $q^{(1)}_\chi=2$ for $\chi=\alpha_t$ and $q^{(1)}_\chi=1$ for any 
$\chi\not=\alpha_t$, moreover 
\begin{eqnarray*}
&&
I(f,1,\chi,0)=I^t_\chi(f^{(1)})=I^t_\chi(f)
\quad
\textrm{ for }
\quad
\chi\not=\alpha_t,\alpha_b
\\
&&
I(f,1,\alpha_b,0)=I^t_{\alpha_b}(f^{(1)})=
I^t_{\alpha_b}(f)\setminus f^{-1}\big(I^t_{\alpha_t}(f)\big)
\\
&&
I(f,1,\alpha_t,0)=I^t_{\alpha_t}(f^{(1)})=f^{-1}\big(I^t_{\alpha_t}(f)\big)
\\
&&
I(f,1,\alpha_t,1)=f\big(I^t_{\alpha_t}(f^{(1)})\big)=I^t_{\alpha_t}(f),
\end{eqnarray*}
and the same holds for $f'$, therefore again $\cP(f,1)\sim\cP(f',1)$. The argument above proves the implication 
$
\gamma(f,1)=\gamma(f',1)\Rightarrow\cP(f,1)\sim\cP(f',1)
$, 
but since there are only two equivalence classes of dynamical partition of order $r=1$, then the opposite implication also holds. Finally fix $r\in\NN$ as assume that the equivalence in the statement is proved up to $r$ for any $\pi'\in\cR$, any interval $J$ and any $f,f'\in\cG(\pi',J)$. Fix 
$f,f'\in\cG(\pi,I)$. Observe preliminarly that for a map $f:I\to I$ the intervals in the partition 
$\cP(f,r+1)$ of $I$ are the images under $f$ of the intervals in the partition $\cP(f^{(r)},1)$ of $I^{(r)}$. Then the Proposition follows because the inductive assumption, together with the argument for $r=1$, imply the equivalences
\begin{align*}
\gamma(f,r+1)=\gamma(f',r+1)
&
\Leftrightarrow
\left\{
\begin{array}{l}
\gamma(f,r)=\gamma(f',r)
\\
\epsilon(f^{(r)})=\epsilon\big((f')^{(r)}\big)
\end{array}
\right.
\\
&
\Leftrightarrow
\left\{
\begin{array}{l}
\cP(f,r)\sim\cP(f',r)
\\
\cP(f^{(r)},1)\sim\cP((f')^{(r)},1)
\end{array}
\right.
&
\Leftrightarrow
\cP(f,r+1)\sim\cP(f',r+1).
\end{align*}
\end{proof}

\section{Proof of the Full Family Theorem}
\label{SectionProofFullFamilyTheorem}

In this section we prove the Full Family Theorem~\ref{TheoremFullFamily}, assuming Theorem~\ref{TheoremRealizationRauzyPath} below, which is the main technical result in this paper. The argument is given in \S~\ref{SectionProofTheoremFullFamily} below. Besides this section \S~\ref{SectionProofFullFamilyTheorem}, the rest of the paper is devoted to the development of the tools needed to prove Theorem~\ref{TheoremRealizationRauzyPath}, whose proof is resumed in \S~\ref{SectionProofTheoremRealizationRauzyPath}. Fix an admissible combinatorial datum $\pi$, let $\cR$ be its Rauzy class, and assume that $\cR$ contains also a cyclic combinatorial datum $\pi^{(\ast)}$, introduced in Definition~\ref{DefinitionCyclicCombinatorialDatum}. Recall from Definition~\ref{DefinitionFullFamily} the notion of full family 
$
(f_\tau)_{\tau\in\Delta^\cA}\subset \cG\big(\pi,[0,1)\big)
$ 
over the admissible combinatorial datum $\pi$. 

\begin{theorem}
\label{TheoremRealizationRauzyPath}
Let $\pi$ be an admissible combinatorial datum and $\cR$ be its Rauzy class, and assume that $\cR$ contains a cyclic combinatorial datum $\pi^{(\ast)}$. Let  
$
(f_\tau)_{\tau\in\Delta^\cA}
$ 
be a full family with combinatorial datum $\pi$. Then for any finite Rauzy path $\gamma:\pi\to\pi^{(\ast)}$ of length $r$ there exists 
$
\tau=\tau(\gamma)\in\Delta^\cA
$ 
such that 
$$
\gamma(f_{\tau(\gamma)},r)=\gamma.
$$
\end{theorem}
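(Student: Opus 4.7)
The plan is to reduce the realization problem for the Rauzy path $\gamma$ to a fixed-point problem for a map on a finite-dimensional simplex. By Proposition~\ref{PropositionPartitionDeterminesRauzyPath}, producing $\tau\in\Delta^\cA$ with $\gamma(f_\tau,r)=\gamma$ amounts to producing $\tau$ such that the dynamical partition $\cP(f_\tau,r)$ has the combinatorial type prescribed by $\gamma$. The first step is therefore to encode this combinatorial type as a model abstract partition of $[0,1)$ into right-open intervals labeled by pairs $(\alpha,i)$ with $\alpha\in\cA$ and $0\leq i\leq q^{(r)}_\alpha-1$, where $q^{(r)}=B_\gamma\vec{1}$, together with the two refinements into $\cP_t(f^{(r)})$-atoms and $\cP_b(f^{(r)})$-atoms induced by $\gamma$. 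A \emph{configuration} will be any realization of this abstract model by an actual length vector, so the space of configurations is naturally identified with an open simplex whose coordinates are the lengths of the atoms of $\cP(f_\tau,r)$.

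The second step is to define the Thurston map $\Theta$ on the space of configurations. Given a configuration $C$, one reads off the positions of the critical values of the would-be GIET: these are the endpoints of the bottom atoms of $\cP_b(f^{(r)})$, which under the compatibility imposed by $\gamma$ prescribe a parameter $\tau(C)\in\Delta^\cA$ via Point~(2) of Definition~\ref{DefinitionFullFamily}. Evaluating the full family at $\tau(C)$ produces a GIET $f_{\tau(C)}\in\cG(\pi,[0,1))$; iterating $f_{\tau(C)}$ and intersecting with $I^{(r)}$ produces a new partition of $[0,1)$ whose combinatorics matches the model (essentially because the full family marks the critical values correctly, so the first $r$ Rauzy steps track the model up to length), and one sets $\Theta(C)$ to be this new configuration. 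By construction, fixed points of $\Theta$ are exactly configurations for which the partition read off from the model coincides with the one dynamically generated by $f_\tau$; equivalently, $\cP(f_\tau,r)\sim$ the model, which by Proposition~\ref{PropositionPartitionDeterminesRauzyPath} gives $\gamma(f_\tau,r)=\gamma$.

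The final and most substantive step is to produce a fixed point of $\Theta$. The space of configurations is an open simplex, so Brouwer's theorem cannot be applied directly; one must first extend $\Theta$ continuously to the closed simplex $\overline{\Delta}$. The natural extension uses the degenerations $\widehat{\cG}(\pi,[0,1))$ supplied by Point~(3) of Definition~\ref{DefinitionFullFamily}: when some coordinates of the configuration collapse, the corresponding branches of $f_{\tau(C)}$ degenerate to points and $\Theta$ extends by continuity on the boundary strata. Once continuity of the extension is established, Brouwer yields a fixed point $C^\ast\in\overline{\Delta}$. The essential remaining task, and the main obstacle of the argument, is to exclude fixed points on $\partial\Delta$. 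This is precisely where the hypothesis that $\cR$ contains a cyclic datum $\pi^{(\ast)}$ is used: after a path $\gamma$ ending at $\pi^{(\ast)}$, the combinatorics $\sigma(\pi^{(\ast)})=\pi^{(\ast)}_b\circ(\pi^{(\ast)}_t)^{-1}$ is a $d$-cycle, and the boundary action of $\Theta$ can be shown to cyclically permute the strata of $\partial\Delta$ corresponding to collapsing individual atoms. Since a map cycling the faces non-trivially cannot have a fixed point on any boundary face, $C^\ast$ must lie in the open simplex $\Delta$, and the corresponding parameter $\tau=\tau(C^\ast)$ satisfies $\gamma(f_\tau,r)=\gamma$.

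The hard part, as anticipated, is the boundary analysis: one must both verify that the extended Thurston map is genuinely continuous across collapses (which depends on the compatibility of the Rauzy dynamics with degenerations in $\widehat{\cG}(\pi,[0,1))$) and show that the cyclic structure of $\sigma(\pi^{(\ast)})$ translates into an honest cyclic permutation of boundary strata with no fixed face. These are carried out in the paper in Sections~\ref{SectionExtensionToTheBoundary}, \ref{SectionContinuityExtendedThurstonMap}, and \ref{SectionCyclicBehavior}, and together they reduce the realization of any prescribed Rauzy path to a routine application of Brouwer's fixed-point theorem on $\overline{\Delta}$.
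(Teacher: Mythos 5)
Your overall strategy matches the paper's: reduce the realization to a fixed-point problem on a simplex of configurations, define a Thurston-type map, extend it to the closed simplex using the degenerations supplied by Point~(3) of Definition~\ref{DefinitionFullFamily}, apply Brouwer, and exclude boundary fixed points via the cyclic hypothesis on $\pi^{(\ast)}$. This is indeed the skeleton of \S\ref{SectionThurstonMap} and \S\ref{SectionExistenceFixedPointThurstonMap}.

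However, there is a genuine gap in your step defining the Thurston map. You propose to read off $\tau(C)$, evaluate $f_{\tau(C)}$, and then ``iterate $f_{\tau(C)}$ and intersect with $I^{(r)}$'' to obtain a new dynamical partition ``whose combinatorics matches the model''. For an arbitrary configuration $C$ there is no reason $f_{\tau(C)}$ admits $r$ steps of Rauzy induction along $\gamma$, nor that its dynamical partition has the model combinatorics; indeed the interval $I^{(r)}$ is only defined once the renormalization sequence is known. Your parenthetical justification (``because the full family marks the critical values correctly, so the first $r$ Rauzy steps track the model'') is exactly what would need a proof and is false in general: matching the critical values does not determine the rest of the orbit structure, so the forward dynamics of $f_{\tau(C)}$ need not respect the model partition. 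The paper sidesteps this entirely by defining the Thurston map as a \emph{single backward pullback}: $v'([\alpha,i]) := f_{\tau}^{-1}\big(v([\alpha,i+1])\big)$ as in Equation~\eqref{EquationDefinitionThurstonMap}, i.e.\ it does not compute a dynamical partition at all. Proposition~\ref{PropositionDefinitionThurstonMap} then proves the nontrivial fact that this one-step pullback preserves the combinatorial type of the configuration (this is where the marking of critical values in Point~(2) of Definition~\ref{DefinitionFullFamily} is actually used), and Lemma~\ref{LemmaFixedPointsThurstonAndRauzyPath} recovers the dynamical partition only \emph{at a fixed point}, where the configuration becomes an actual closed orbit of $f_\tau$. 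Without this pullback definition the map you describe is not well-defined on the whole configuration simplex, so Brouwer cannot be applied; replacing your forward-iteration construction with the one-step pullback and proving the analogue of Proposition~\ref{PropositionDefinitionThurstonMap} is the missing idea.
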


Given any two finite paths $\gamma:\pi\to\pi'$ and $\eta:\pi'\to\pi''$ in $\cR$, where the starting point $\pi'\in\cR$ of $\eta$ coincides with the ending point of $\gamma$, consider the concatenation 
$
\gamma\ast\eta:\pi\to\pi''
$.

\begin{remark}
\label{RemarkRealizationRauzyPath}
The requirement that $\gamma:\pi\to\pi^{(\ast)}$ ends at $\pi^{(\ast)}$ in Theorem~\ref{TheoremRealizationRauzyPath} can be easily removed a posteriori. Indeed if $\gamma:\pi\to\pi'$ is any finite Rauzy path with length $r$, there exists a finite path 
$
\eta:\pi'\to\pi^{(\ast)}
$ 
starting at the ending point $\pi'$ of $\gamma$, ending at $\pi^{(\ast)}$ and with length $l(\eta)\leq|\cR|$, where $|\cR|$ denotes the cardinality of $\cR$. For the concatenation $\gamma\ast\eta$ Theorem~\ref{TheoremRealizationRauzyPath} gives $\tau\in\Delta^\cA$ with 
$$
\gamma\big(f_\tau,r+l(\eta)\big)=\gamma\ast\eta,
$$ 
so that by truncation we get $\gamma(f_\tau,r)=\gamma$.
\end{remark}

\subsection{Preliminary Lemmas}

Before passing to the proof of Theorem~\ref{TheoremFullFamily}, we need Lemma~\ref{LemmaContinuityRauzyPath} and Lemma~\ref{LemmaPositiveRauzyPaths} below.

\begin{lemma}
\label{LemmaContinuityRauzyPath}
Fix any $r\in\NN$ and consider $f\in\cG\big(\pi,[0,1)\big)$ which admits $r$ steps of the Rauzy induction. Then there exists a set  
$
\cU=\cU(f,r)\subset\cG\big(\pi,[0,1)\big)
$ 
open with respect to the distance defined by Equation~\eqref{EquationDinstanceGIETs} such that for any 
$f'\in\cU$ we have 
$$
\gamma(f,r)=\gamma(f',r),
$$
that is $f$ and $f'$ have the same Rauzy renormalization path up to the first $r$ steps.
\end{lemma}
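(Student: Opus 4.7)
My plan is to argue by induction on $r$, using that the Rauzy path is entirely determined by the sequence of signs $\epsilon(f^{(k)})\in\{t,b\}$ for $k=0,\dots,r-1$, and that each such sign is determined by the sign of a nonzero difference of critical data of $f^{(k)}$. The case $r=0$ is vacuous. For the inductive step, I first apply the inductive hypothesis at level $r$ to obtain an open neighborhood $\cV$ of $f$ on which $\gamma(\cdot,r)\equiv\gamma(f,r)$; in particular every $f'\in\cV$ admits an $r$-th Rauzy renormalization $f'^{(r)}$ with the same combinatorial datum $\pi^{(r)}$. It then remains to shrink $\cV$ so that also $\epsilon(f'^{(r)})=\epsilon(f^{(r)})$, since this determines the $(r+1)$-th arrow and hence $\gamma(f',r+1)$.

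Since $\epsilon(g)$ is determined by the sign of $u^t_{\alpha_t}(g)-u^b_{\alpha_b}(g)$, and this is nonzero for $g=f^{(r)}$ by hypothesis, I need only show that the critical points and critical values of $f^{(r)}$ depend continuously on $f$ in the distance from Equation~\eqref{EquationDinstanceGIETs}. The key observation is that the passage from $f^{(k)}$ to $f^{(k+1)}$ described in \S~\ref{SectionRauzyInductionGIETs} either preserves a critical point or value (when the corresponding point survives the restriction to the smaller interval $I^{(k+1)}$) or else replaces it by its image or preimage under a single branch of $f^{(k)}$, which is itself a composition of boundedly many continuous branches of $f$. Unwinding for $k=0,\dots,r-1$, every critical point and every critical value of $f^{(r)}$ can be expressed as $f^{\pm n}\bigl(u^{t/b}_\beta(f)\bigr)$ for some $\beta\in\cA$ and some integer $n\geq 0$ bounded in terms of the path $\gamma(f,r)$.

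These finite orbits depend continuously on $f\in\cG(\pi,[0,1))$ in the distance from Equation~\eqref{EquationDinstanceGIETs}: this distance bounds both the positions of the critical points of $f$ and, via the orientation-preserving monotonicity of each branch, the sup-norm distance between $f_\chi$ and $f'_\chi$ on compact subsets of the interior of $I^t_\chi(f)$. At the nominal $f$, the iterates used to compute the critical data of $f^{(r)}$ lie strictly in the interior of the appropriate continuity subinterval at every intermediate step; this strict interiority persists for $f'\in\cV$ sufficiently close to $f$, and the orbit points then vary continuously with $f'$. Shrinking $\cV$ accordingly, the sign of $u^t_{\alpha^{(r)}_t}(f'^{(r)})-u^b_{\alpha^{(r)}_b}(f'^{(r)})$ becomes locally constant, which closes the induction.

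The main obstacle, and the only point I would grind through in detail, is this continuity of finite orbits in the Hausdorff-on-graph-closures distance: one must extract sup-norm closeness of each continuous branch on compact subsets of its domain from Hausdorff closeness of its graph closure, using monotonicity to convert the two-dimensional bound into a one-dimensional one, and then verify that at $f$ all finitely many relevant iterates stay uniformly bounded away from the discontinuity set, so that small perturbations cannot move them across a branch boundary. Once this stability is recorded, the inductive step above applies mechanically.
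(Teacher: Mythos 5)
Your argument is correct but takes a different route from the paper's. The paper observes that the endpoints of the dynamical partition $\cP(f,r)$ are finitely many iterates $f^k(u^t_\alpha)$ with $|k|$ bounded by the return times, that these points depend continuously on $f$, and then concludes $\cP(f',r)\sim\cP(f,r)$ for $f'$ close to $f$ and applies Proposition~\ref{PropositionPartitionDeterminesRauzyPath}. You instead run an explicit induction on $r$, tracking the sign of $u^t_{\alpha^{(r)}_t}\big(f'^{(r)}\big)-u^b_{\alpha^{(r)}_b}\big(f'^{(r)}\big)$, which is nonzero for $f'=f$ and is a continuous function of $f'$ once the critical data of $f'^{(r)}$ are recognized as bounded iterates of the critical data of $f'$. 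Both arguments reduce to the same continuity fact for finite orbits of critical points, but yours is more self-contained (no appeal to the partitions-determine-paths equivalence), at the cost of re-running an induction the paper delegates to Proposition~\ref{PropositionPartitionDeterminesRauzyPath}. One small imprecision: your ``strict interiority at every intermediate step'' is not literally true, since the starting points $u^{t/b}_\alpha(f)$ are by definition endpoints of the relevant continuity subintervals; the correct openness statement is that the intermediate iterates never coincide with an \emph{inappropriate} singular point (a condition guaranteed by the hypothesis that $r$ Rauzy steps are defined for $f$), and that this property persists under small perturbations of $f$.
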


\begin{proof}
Recall the notation in \S~\ref{SectionDynamicalPartition}. Set $\gamma=\gamma(f,r)$ and let 
$
q^{(r)}=B_\gamma\vec{1}
$ 
be the corresponding vector of return times. Let $\cP(f,r)$ be the corresponding dynamical partition, and recall that its atoms are intervals whose endpoints have the form $f^k(u^t_\alpha)$ with $\alpha\in\cA$ and 
$
|k|\leq \max_{\chi\in\cA}q_\chi^{(r)}
$. 
All these points depend continuously on $f\in\cG\big(\pi,[0,1)\big)$, hence we have 
$
\cP(f',r)\sim\cP(f,r)
$ 
if $f'$ is close enough to $f$. Then the Lemma follows directly from Proposition~\ref{PropositionPartitionDeterminesRauzyPath}. 
\end{proof}

Fix a full family 
$
(f_\tau)_{\tau\in\Delta^\cA}
$ 
over $\pi$, parametrized by a map 
$
\Delta^\cA\to\cG\big(\pi,[0,1)\big)
$, 
$
\tau\mapsto f_\tau
$. 
For any finite Rauzy path $\gamma:\pi\to\pi'$ of length $r$ denote by $\Delta_\gamma$ the set of those $\tau\in\Delta^\cA$ such that $f_\tau$ admits $r$ steps of the Rauzy induction map $\cQ$ with  
$
\gamma(f_\tau,r)=\gamma
$. 
The set $\Delta_\gamma$ is open in $\Delta^\cA$, according to Lemma~\ref{LemmaContinuityRauzyPath} and to the continuity of the map $\tau\mapsto f_\tau$. Moreover such open set $\Delta_\gamma$ is not empty, according to Theorem~\ref{TheoremRealizationRauzyPath} and Remark~\ref{RemarkRealizationRauzyPath}, and we denote by 
$
\overline{\Delta_\gamma}
$ 
its closure in $\overline{\Delta^\cA}$. A finite Rauzy path $\eta$ in $\cR$ is \emph{positive} if the matrix $B_\eta\in\slgroup(d,\ZZ)$ defined in \S~\ref{SectionRauzyClasses} has all its entries strictly positive. This  notion has a topological counterpart for full families of GIETs established by the Lemma below.

\begin{lemma}
\label{LemmaPositiveRauzyPaths}
Consider finite paths $\gamma:\pi\to\pi'$ and $\eta:\pi'\to\pi''$ as above and assume that $\eta$ is positive. Then we have
$$
\overline{\Delta_{\gamma\ast\eta}}\subset\Delta_\gamma.
$$
\end{lemma}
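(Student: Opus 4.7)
The plan is to take an arbitrary $\tau\in\overline{\Delta_{\gamma\ast\eta}}$, write it as a limit of a sequence $\tau_n\in\Delta_{\gamma\ast\eta}$ with $\tau_n\to\tau$, and establish $\tau\in\Delta_\gamma$. Since $\Delta_{\gamma\ast\eta}\subseteq\Delta_\gamma$ trivially, the task reduces to ruling out that $\tau$ lies on the topological boundary of $\Delta_\gamma$ inside $\overline{\Delta^\cA}$. This boundary decomposes into two kinds of parameters: (i) those in $\partial\Delta^\cA$, where $\widehat F(\tau)$ is a genuine degeneration; and (ii) those in $\Delta^\cA$ at which some critical-point coincidence $u^t_{\alpha_t^{(k)}}(f_\tau^{(k)})=u^b_{\alpha_b^{(k)}}(f_\tau^{(k)})$ with $k\leq r-1$ prevents the first $r$ Rauzy steps from all being defined. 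Once both cases are excluded, Lemma~\ref{LemmaContinuityRauzyPath} together with the continuity of $F$ gives $\gamma(f_\tau,r)=\gamma(f_{\tau_n},r)=\gamma$ for $n$ large, so $\tau\in\Delta_\gamma$.

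The algebraic consequence of positivity is that the matrix $B_{\gamma\ast\eta}=B_\eta\cdot B_\gamma$ has all entries at least $1$: $B_\eta$ is strictly positive by hypothesis, while $B_\gamma$ has non-negative integer entries with $[B_\gamma]_{\alpha,\alpha}\geq 1$ (products of elementary Rauzy matrices preserve unit diagonal from below), so $[B_{\gamma\ast\eta}]_{\alpha,\beta}\geq [B_\eta]_{\alpha,\beta}\cdot[B_\gamma]_{\beta,\beta}\geq 1$. By Lemma~\ref{LemmaIntervalsPartitionAndMatrix}(2), this translates into the dynamical statement that for every $\tau_n\in\Delta_{\gamma\ast\eta}$ and every pair $(\alpha,\beta)\in\cA\times\cA$ the $\alpha$-tower above $I^t_\alpha(f_{\tau_n}^{(r+l(\eta))})$ visits the column $I^t_\beta(f_{\tau_n})$ at least once; equivalently, the dynamical partition $\cP(f_{\tau_n},r+l(\eta))$ refines every top column into at least $d$ sub-intervals, one drawn from each of the $d$ towers.

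To rule out case (i), fix $\alpha_0\in\cZ(\tau)$. By Definition~\ref{DefinitionFullFamily}(2) one has $|I^b_{\alpha_0}(f_{\tau_n})|=\tau_{n,\alpha_0}\to 0$, and by the continuity of $\widehat F$ on $\overline{\Delta^\cA}$ also $|I^t_{\alpha_0}(f_{\tau_n})|\to 0$, because the graph of the $\alpha_0$-branch of $f_{\tau_n}$ converges in Hausdorff distance to the singular point $s_{\alpha_0}$ of $\widehat F(\tau)$, forcing both endpoints of this graph to collapse to the same point. The combinatorial richness above then forces, for every $\alpha\in\cA$, the atom of the $\alpha$-tower that lies inside $I^t_{\alpha_0}(f_{\tau_n})$ to shrink to a point as $n\to\infty$; propagating this collapse backward along the $\alpha$-tower via the non-collapsing branches $f_{\tau_n}|_{I^t_\beta}$, $\beta\notin\cZ(\tau)$, which converge uniformly to honest homeomorphisms of fixed intervals, one concludes that each base $I^t_\alpha(f_{\tau_n}^{(r+l(\eta))})$ shrinks to a point, hence $|I^{(r+l(\eta))}_{\tau_n}|\to 0$; this degeneration of the renormalized interval contradicts the combinatorial pattern of $r+l(\eta)$ non-degenerate Rauzy steps prescribed by $\gamma\ast\eta$ for $n$ large. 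Case (ii) is the GIET counterpart of the classical IET inclusion $\overline{\cC_{\gamma\ast\eta}}\setminus\{0\}\subseteq\cC_\gamma$, whose proof in the IET case uses the identity $\cC_{\gamma\ast\eta}={}^tB_\gamma\cdot{}^tB_\eta(\RR^\cA_+)$ together with the fact that ${}^tB_\eta$ maps $\overline{\RR^\cA_+}\setminus\{0\}$ strictly into $\RR^\cA_{>0}$; in the GIET setting one expresses the intermediate column-length difference $|u^t_{\alpha_t^{(k)}}(f_{\tau_n}^{(k)})-u^b_{\alpha_b^{(k)}}(f_{\tau_n}^{(k)})|$ as a length of a union of atoms of $\cP(f_{\tau_n},r+l(\eta))$ whose cardinality is controlled by the entries of $B_{\gamma\ast\eta}$ via Lemma~\ref{LemmaIntervalsPartitionAndMatrix}(2), and observes that for $\tau_n\to\tau\in\Delta^\cA$ these atoms stay of positive length uniformly on a compact neighborhood of $\tau$. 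The main obstacle is case (ii), namely the transfer of the clean linear IET statement to the nonlinear GIET setting, which is bridged by the atomic dictionary of Lemma~\ref{LemmaIntervalsPartitionAndMatrix} together with the continuity of $F$.
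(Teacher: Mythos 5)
Your plan is to prove the inclusion by contradiction, splitting the boundary of $\Delta_\gamma$ into two cases (degenerate parameters $\tau\in\partial\Delta^\cA$, and non-degenerate parameters at a critical-point coincidence), whereas the paper treats both cases uniformly in a single positive argument. The paper picks a letter $\alpha$ whose whole tower in $\cP(f_{\tau(n)},r+k)$ stays bounded below in length along the sequence (some atom is bounded below since the atoms partition $[0,1)$, and once one level survives the whole tower survives, since any collapsing branch traversed would shrink it to zero), then uses positivity of $B_\eta$ via Lemma~\ref{LemmaIntervalsPartitionAndMatrix}(2) to place a non-degenerating atom of this $\alpha$-tower inside \emph{every} atom of the coarser partition $\cP(f_{\tau(n)},r)$; hence all atoms of $\cP(f_{\tau(n)},r)$ stay bounded below, the limit partition $\cP(f_{\tau(\infty)},r)$ is non-degenerate and $\sim\cP(T,r)$, and $\gamma(f_{\tau(\infty)},r)=\gamma$ by Proposition~\ref{PropositionPartitionDeterminesRauzyPath}. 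This simultaneously rules out degeneration of $f_{\tau(\infty)}$ and of the first $r$ Rauzy steps, with no case split.

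There are genuine gaps in your argument as written. In case~(i), after propagating the collapse backward along a tower to conclude $|I^{(r+l(\eta))}_{\tau_n}|\to 0$, you claim this ``contradicts the combinatorial pattern of $r+l(\eta)$ non-degenerate Rauzy steps for $n$ large.'' It does not: for each fixed $n$ the renormalized interval has positive length and $\gamma(f_{\tau_n},r+l(\eta))=\gamma\ast\eta$ holds by hypothesis, and nothing forbids $|I^{(r+l(\eta))}_{\tau_n}|$ from tending to $0$ along a sequence of legitimate parameters. The contradiction you actually need is that if every base shrinks, then (propagating \emph{forward} through non-collapsing branches, or noting that collapsing branches shrink everything in them) every atom of $\cP(f_{\tau_n},r+l(\eta))$ shrinks, while these atoms partition $[0,1)$, so their lengths sum to $1$ -- that is the impossible statement, and you stop one step short of it. In case~(ii), the crucial assertion that the relevant atoms ``stay of positive length uniformly on a compact neighborhood of $\tau$'' is simply stated, not proved; atoms of $\cP(f_{\tau_n},r+l(\eta))$ can a priori collapse even when $\tau\in\Delta^\cA$, and establishing that they do not is exactly the content of the lemma. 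The positivity of $\eta$ has to be invoked here as well, and you do not do so. Both gaps are closed at once by the paper's single argument: exhibit one surviving tower in the fine partition, and push it into every atom of the coarse partition using $B_\eta>0$. (A minor additional imprecision: you write ``the atom of the $\alpha$-tower that lies inside $I^t_{\alpha_0}$'' as if it were unique, but $[B_{\gamma\ast\eta}]_{\alpha,\alpha_0}$ may exceed $1$; this does not affect the logic.)
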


\begin{proof}
Recall the notation in \S~\ref{SectionDynamicalPartition}. Let $r$ and $k$ be the length of $\gamma$ and $\eta$ respectively, so that $\gamma\ast\eta$ has length $r+k$, then consider the vectors 
$
q^{(r)}:=B_{\gamma}\vec{1}
$, 
$
q^{(r+k)}:=B_{\gamma\ast\eta}\vec{1}
$ 
and 
$
Q^{(k)}:=B_{\eta}\vec{1}
$ 
in $\NN^\cA$. Let $T=T(\pi,\lambda)$ be a fixed IET such that $\gamma(T,r+k)=\gamma\ast\eta$. Consider the partition $\cP(T,r)$ and its refinement $\cP(T,r+k)$. According to Propositin~\ref{PropositionPartitionDeterminesRauzyPath}, for any 
$
\tau\in\Delta_{\gamma\ast\eta}
$ 
similar partitions $\cP(f_\tau,r)$ and $\cP(f_\tau,r+k)$ are defined, where 
$
\cP(f_\tau,r)\sim\cP(T,r)
$ 
and 
$
\cP(f_\tau,r+k)\sim\cP(T,r+k)
$, 
and where the atoms in $\cP(f_\tau,r)$ are the intervals 
$$
I(f_{\tau},r,\chi,i):=
f_{\tau}^i\big(I^t_\chi(f_{\tau}^{(r)})\big)
\quad
\textrm{ for }
\quad
\chi\in\cA
\quad;\quad
0\leq i\leq q^{(r)}_\chi-1
$$
and the atoms in $\cP(f_\tau,r+k)$ are the intervals  
$$
I(f_{\tau},r+k,\chi,i):=
f_{\tau}^i\big(I^t_\chi(f_{\tau}^{(r+k)})\big)
\quad
\textrm{ for }
\quad
\chi\in\cA
\quad;\quad
0\leq i\leq q^{(r+k)}_\chi-1.
$$
Consider 
$
\tau(\infty)\in\partial\Delta_{\gamma\ast\eta}
$ 
and let 
$
\tau(n)\in\Delta_{\gamma\ast\eta}
$ 
with $\tau(n)\to\tau(\infty)$. A priori $f_{\tau(\infty)}$ is a degenerate GIET, but we show that indeed 
$
\tau(\infty)\in\Delta_{\gamma}
$, 
which in particular implies that 
$
f_{\tau(\infty)}\in\cG\big(\pi,[0,1)\big)
$. 
Since any $\cP(f_{\tau(n)},r+k)$ is a partition of $[0,1)$ and we have 
$
f_{\tau(n)}\to f_{\tau(\infty)}
$ 
as $n\to\infty$, then there exists a letter $\alpha\in\cA$ such that 
$$
\inf_{n\in\NN}|I(f_{\tau(n)},r+k,\alpha,i)|>0
\quad
\textrm{ for any}
\quad
0\leq i\leq q^{(r+k)}_\alpha-1.
$$
Since 
$
\gamma(f^{(r)}_{\tau(n)},k)=\eta
$ 
is positive, where 
$
f_{\tau(n)}^{(r)}=\cQ^r(f_{\tau(n)})
$ 
is the $r$-th step of the Rauzy map applied to $f_{\tau(n)}$, then Lemma~\ref{LemmaIntervalsPartitionAndMatrix} implies that there exists an integer 
$j$ with 
$0\leq j\leq Q_\alpha^{(k)}-1$ such that  
$$
(f_{\tau(n)}^{(r)})^j\big(I^t_\alpha(f_{\tau(n)}^{(r+k)})\big)
\subset I^t_\beta(f_{\tau(n)}^{(r)})
\quad
\textrm{ for any }
\quad
n\in\NN.
$$
Recalling that $f_{\tau(n)}^{(r)}$ is a first return of $f_{\tau(n)}$, then for any $\beta\in\cA$ and any $i$ with 
$
0\leq i\leq q^{(r)}_\beta-1
$ 
there exists $l$ with 
$
0\leq l\leq q_\alpha^{(r+k)}-1
$ 
such that
$$
I(f_{\tau(n)},r+k,\alpha,l)=
f_{\tau(n)}^l\big(I^t_\alpha(f_{\tau(n)}^{(r+k)})\big)
\subset I(f_{\tau(n)},r,\beta,i).
$$
It follows that
$$
\inf_{n\in\NN}|I(f_{\tau(n)},r,\beta,i)|>0
\quad
\textrm{ for any}
\quad
\beta\in\cA
\quad
\textrm{ and any }
\quad
0\leq i\leq q^{(r)}_\beta-1.
$$
Since 
$
\cP(f_{\tau(n)},r)\sim\cP(T,r)
$,  
then for any $n$ the intervals $I(f_{\tau(n)},r,\beta,i)$ in $\cP(f_{\tau(n)},r)$ have the same order as the intervals $I(T,r,\beta,i)$ in $\cP(T,r)$. Moreover the last condition implies that their size stays bounded from below for any $n$. If follows that the map $f_{\tau(\infty)}$ admits a partition 
$
\cP(f_{\tau(\infty)},r)
$ 
whose atoms are the non-degenerate right-open intervals defined by
$$
I(f_{\tau(\infty)},r,\beta,i):=
\lim_{n\to\infty}I(f_{\tau(n)},r,\beta,i),
$$
where the limit above is in the Hausdorff metric. Since the order is preserved in the limit, for such partition we have 
$
\cP(f_{\tau(\infty)},r)\sim\cP(T,r)
$, 
which is equivalent to 
$
\gamma(f_{\tau(\infty)},r)=\gamma(T,r)
$ 
according to Proposition~\ref{PropositionPartitionDeterminesRauzyPath}. The Lemma is proved.
\end{proof}

\subsection{Proof of Theorem~\ref{TheoremFullFamily}}
\label{SectionProofTheoremFullFamily}

Let $\pi$ be a combinatorial datum as in Theorem~\ref{TheoremFullFamily}. Let $\lambda\in\Delta^\cA$ be a length datum and $T=T(\pi,\lambda)$ be the corresponding IET, which we assume to be infinitely renormalizable, according to the definitions introduced in \S~\ref{SectionIterationRauzyInductionGIETs}. Recalling Definition~\ref{DefinitionRauzyPathOfGIET}, 
let 
$
\gamma(T,\infty)
$ 
be the infinite Rauzy path of $T$. Then fix $r\in\NN$ and let 
$
\gamma(T,r)=\gamma_1\ast\dots\ast\gamma_r
$ 
be the concatenation of the first $r$ arrows of 
$\gamma(T,\infty)$. According to Theorem~\ref{TheoremRealizationRauzyPath} and Remark~\ref{RemarkRealizationRauzyPath} there exists  
$
\tau(r)\in\Delta^\cA
$ 
such that 
\begin{equation}
\label{Equation(1)SectionProofTheoremFullFamily}
\gamma(f_{\tau(r)},r)=\gamma(T,r). 
\end{equation}
Moreover, according to Lemma~\ref{LemmaContinuityRauzyPath} and to the continuity of the map $\tau\mapsto f_\tau$, there is a non-empty open set 
$$
\Delta_{\gamma(T,r)}:=\{\tau\in\Delta^\cA,\gamma(f_\tau,r)=\gamma(T,r)\}.
$$
Increasing $r$ we get a sequence of nested non-empty open sets 
$$
\dots\subset\Delta_{\gamma(T,r+1)}\subset\Delta_{\gamma(T,r)}\subset\dots\subset\Delta^\cA.
$$
According to Proposition 7.9 in \cite{YoccozClay}, the path $\gamma(T,\infty)$ is infinite-complete, that is any letter $\alpha\in\cA$ is the winner of infinitely many arrows of $\gamma(T,\infty)$ (see also \S~1.2.3 in \cite{MarmiMoussaYoccozCohomological}). As a consequence of this last property, according to Proposition 7.12 in \cite{YoccozClay}, for any $r\in\NN$ there is an integer $k=k(r)>r$ such that, decomposing $\gamma(T,k)$ as
$$
\gamma(T,k)=\gamma(T,r)\ast\eta(T,r,k),
$$
the factor $\eta(T,r,k)$ is a positive finite Rauzy path (see also \S~1.2.4 in \cite{MarmiMoussaYoccozCohomological}). Therefore Lemma~\ref{LemmaPositiveRauzyPaths} implies that 
$$
\Delta_{\gamma(T,\infty)}:=
\bigcap_{r\in\NN}\overline{\Delta_{\gamma(T,r)}}
$$
is a compact subset contained in the interior of $\Delta^\cA$. Therefore, modulo taking a subsequence of the parameters $\tau(r)\in\Delta^\cA$, $r\in\NN$ in Equation~\eqref{Equation(1)SectionProofTheoremFullFamily}, there exists $\tau\in\Delta_{\gamma(T,\infty)}$ such that $\tau(r)\to\tau$ as $r\to\infty$, that is 
$
f_{\tau(r)}\to f_\tau
$. 
In particular the limit $f_\tau$ is a non-degenerate element in 
$
\cG\big(\pi,[0,1)\big)
$, 
because $\tau$ belongs to the interior of $\Delta^\cA$. Moreover $f_\tau$ is infinitely renormalizable, because 
$
\tau\in\Delta_{\gamma(T,\infty)}
$. 
Fix $r_0\in\NN$ and let 
$
\cU=\cU(f_\tau,r_0)\subset\cG\big(\pi,[0,1)\big)
$ 
be the open set around $f_\tau$ as in Lemma~\ref{LemmaContinuityRauzyPath}. Since 
$
f_{\tau(r)}\to f_\tau
$ 
as $r\to\infty$, then for $r$ big enough we have $f_{\tau(r)}\in \cU$. Moreover any $r$ big enough satisfies also $r\geq r_0$. Thus, according to Lemma~\ref{LemmaContinuityRauzyPath}, for any $r$ big enough we have
$$
\gamma(f_\tau,r_0)=
\gamma(f_{\tau(r)},r_0)=
\gamma(T,r_0).
$$
Since $r_0$ in the last equality can be chosen arbitrarily big, we get 
$
\gamma(f_\tau,\infty)=\gamma(T,\infty)
$. 
Theorem~\ref{TheoremFullFamily} is proved $\qed$.

\section{The Thurston map}
\label{SectionThurstonMap}

Fix an admissible combinatorial datum $\pi$ over the alphabet $\cA$, let $\cR$ be its Rauzy class and assume that $\cR$ contains a cyclic combinatorial datum $\pi^{(\ast)}$, where we recall that this means that the permutation 
$
\sigma(\pi^{(\ast)}):\{1,\dots,d\}\to\{1,\dots,d\}
$ 
introduced in Definition~\ref{DefinitionCyclicCombinatorialDatum} is cyclic with maximal order $d$. Fix a finite Rauzy path $\gamma:\pi\to\pi^{(\ast)}$ and let $r$ be its length. Let $B_\gamma\in\slgroup(d,\ZZ)$ be the matrix defined in \S~\ref{SectionRauzyClasses}. Recall that $\vec{1}\in\RR^\cA$ denotes the vector with all entries equal to $1$ and let 
$
q^{(r)}:=B_\gamma\vec{1}
$ 
be the integer vector of \emph{return times} defined in \S~\ref{SectionDynamicalPartition}. Define the positive integer
$$
N=N(\gamma):=\sum_{\chi\in\cA}q^{(r)}_\chi.
$$

\subsection{Configurations}
\label{SectionConfigurations}

Consider the normalized length datum 
$
\lambda^{(\gamma)}\in\QQ_+^\cA\cap\Delta^\cA
$ 
defined by 
$$
\lambda^{(\gamma)}:=N^{-1}\cdot ^t\!B_\gamma\vec{1}.
$$ 
Observe that 
$
\sum_{\chi\in\cA}\lambda^{(\gamma)}_\chi=1
$ 
and let 
$
T_\gamma:[0,1)\to[0,1)
$ 
be the IET with length datum $\pi$ and combinatorial datum $\lambda^{(\gamma)}$, that is 
$
T_\gamma:=T(\pi,\lambda_\gamma)
$. 
Let
$
T_\gamma^{(r)}:=\cQ^r(T_\gamma)
$ 
be the image of $T_\gamma$ under the $r$-th iteration of the Rauzy map, which is defined in \S~\ref{SectionRauzyInductionGIETs}. Recalling \S~\ref{SectionDynamicalPartition}, for $\alpha\in\cA$ consider the intervals $I^t_\alpha(T_\gamma^{(r)})$ and their images under $T_\gamma$, that is the intervals $I(T_\gamma,r,\alpha,i)$ with $0\leq i\leq q^{(r)}_\alpha-1$ as in Equation~\eqref{EquationDynamicalPartitionHigherOrder}, which are the atoms of the partition $\cP(T_\gamma,r)$. Observe that
$$
T_\gamma^{(r)}=T(\pi^{(\ast)},N^{-1}\cdot \vec{1}),
$$
which is a periodic IET acting on the intervals $I^t_\alpha(T_\gamma^{(r)})$, $\alpha\in\cA$ as the cyclic permutation 
$
\sigma(\pi^{(\ast)})
$. 
Since $T_\gamma^{(r)}$ is a first return of $T_\gamma$, then $T_\gamma$ acts as a cyclic permutation on the intervals of the partition $\cP(T_\gamma,r)$ by 
$$
T_\gamma\big(I(T_\gamma,r,\alpha,i)\big)=
I(T_\gamma,r,\alpha,i+1),
$$
modulo identifications on the labels $(\alpha,i)\in\cA\times\ZZ$ given by  
$$
(\alpha,i)\sim(\beta,j)
\Leftrightarrow
\left\{
\begin{array}{l}
\pi^{(\ast)}_t(\beta)=\pi^{(\ast)}_b(\alpha)
\\
i=q^{(r)}_\alpha +j.
\end{array}
\right.
$$
It is convenient to consider different identifications on $\cA\times\ZZ$. Recall from \S~\ref{SectionStatementOfResults} that we call  
$
u^t_\alpha(T_\gamma):=\min I^t_\alpha(T_\gamma)
$ 
and 
$
u^t_\alpha(T_\gamma^{(r)}):=\min I^t_\alpha(T_\gamma^{(r)})
$ 
the critical points respectively of $T_\gamma$ and of $T^{(r)}$ corresponding to the letter $\alpha\in\cA$, then let $h(\alpha,r)$ be the unique integer with 
$
0\leq h(\alpha,r)\leq q^{(r)}_\alpha-1
$ 
such that 
$$
T_\gamma^{h(\alpha,r)}\big(u^t_\alpha(T_\gamma^{(r)})\big)=
u^t_\alpha(T_\gamma).
$$
In particular $h(\alpha_0,r)=0$ for the letter $\alpha_0$ with $\pi_t(\alpha_0)=1$. Consider the identifications between the labels in $\cA\times\ZZ$ given by
\begin{equation}
\label{EquationIdentificationLabelsCriticalPoints}
(\alpha,i)\sim(\beta,j)
\Leftrightarrow
\left\{
\begin{array}{l}
\pi_t(\beta)=\pi_b(\alpha)
\\
i-j=q^{(r)}_\alpha +h(\beta,r)-h(\alpha,r).
\end{array}
\right.
\end{equation}
Let $\cI(\gamma)$ be the quotient of the set $\cA\times\ZZ$ under the equivalence relation $\sim$ induced by Equation~\eqref{EquationIdentificationLabelsCriticalPoints}, whose equivalence classes are denoted by $[\alpha,i]$. The map 
$
[\alpha,i]\mapsto[\alpha,i+1]
$ 
on equivalence classes still corresponds to the cyclic action of $T_\gamma$ on intervals of the partition $\cP(T_\gamma,r)$, but in this notation, for $\alpha\in\cA$, the classes $[\alpha,0]$ correspond to the intervals 
$
I\big(T_\gamma,r,\alpha,h(\alpha,r)\big)
$. 
An example of such labelling is given in Figure \S~\ref{FigureExampleConfiguration}, where we consider the alphabet $\cA:=\{A,B,C,D\}$ and the admissible combinatorial data 
$$
\pi=
\begin{pmatrix}
A & B & C & D \\
D & C & B & A 
\end{pmatrix}
\quad
\textrm{ and }
\quad
\pi^{(\ast)}=
\begin{pmatrix}
A & B & D & C \\
D & A & C & B 
\end{pmatrix},
$$
which belong to the same Rauzy class, and are connected by the Rauzy path $\gamma:\pi\to\pi^{(\ast)}$ of length five whose winner are in the order the letters $A,A,A,D,B$. In particular $\pi^{(\ast)}$ is cyclic with 
$
\sigma(\pi^{(\ast)})=(1,2,4,3)
$, 
which has maximal order $4$ in the symmetric group $S_4$. In this case, the matrix $B_\gamma$ defined in \S~\ref{SectionRauzyClasses} is 
$$
B_{\gamma}=
\begin{pmatrix}
2 & 0 & 0 & 1 \\
1 & 1 & 0 & 0 \\
1 & 0 & 1 & 0 \\
2 & 1 & 0 & 1
\end{pmatrix}
\quad
\textrm{ with }
\quad
^tB^{-1}_{\gamma}
=
\begin{pmatrix}
1 &-1 &-1 &-1 \\
1 & 0 &-1 &-2 \\
0 & 0 & 1 & 0 \\
-1& 1 & 1 & 2
\end{pmatrix}.
$$
We have $q^{(r)}=(3,2,2,4)$ and $N(\gamma)=11$, so that 
$
\lambda^{(\gamma)}=(\frac{6}{11},\frac{2}{11},\frac{1}{11},\frac{2}{11})
$. 
Moreover the vector $h\in\NN^\cA$ whose $\alpha$-entry is $h_\alpha:=h(\alpha,r)$ is $h=(0,1,1,3)$.

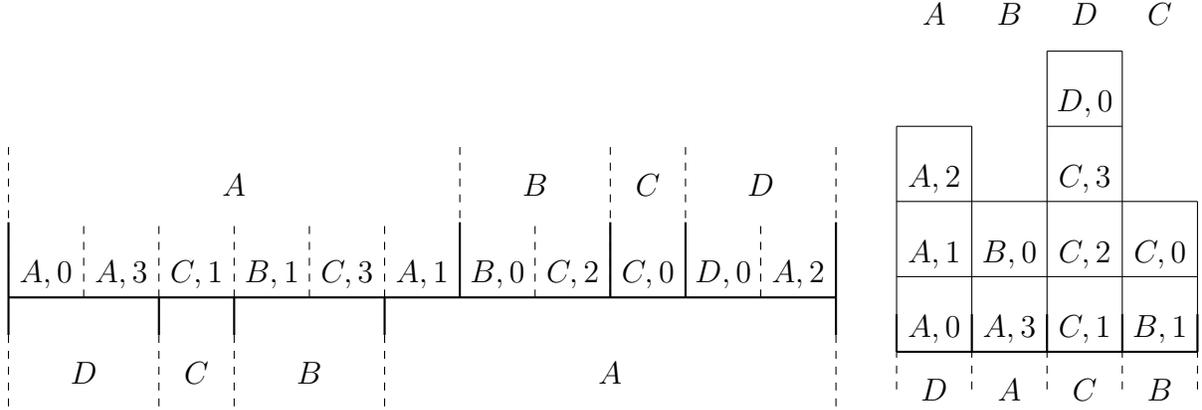
\begin{figure}
\begin{center}
{\begin{tikzpicture}[scale=0.1]



\draw[-,thick] (0,0) -- (110,0) {};
\draw[-,thick] (0,0) -- (0,10) {};
\draw[-,thin,dashed] (0,0) -- (0,20) {};
\node at (30,15) {$A$};
\node at (5,3) {$A,0$};
\draw[-,thin,dashed] (10,0) -- (10,10) {};
\node at (15,3) {$A,3$};
\draw[-,thin,dashed] (20,0) -- (20,10) {};
\node at (25,3) {$C,1$};
\draw[-,thin,dashed] (30,0) -- (30,10) {};
\node at (35,3) {$B,1$};
\draw[-,thin,dashed] (40,0) -- (40,10) {};
\node at (45,3) {$C,3$};
\draw[-,thin,dashed] (50,0) -- (50,10) {};
\node at (55,3) {$A,1$};

\draw[-,thick] (60,0) -- (60,10) {};
\draw[-,thin,dashed] (60,0) -- (60,20) {};
\node at (70,15) {$B$};
\node at (65,3) {$B,0$};
\draw[-,thin,dashed] (70,0) -- (70,10) {};
\node at (75,3) {$C,2$};

\draw[-,thick] (80,0) -- (80,10) {};
\draw[-,thin,dashed] (80,0) -- (80,20) {};
\node at (85,15) {$C$};
\node at (85,3) {$C,0$};

\draw[-,thick] (90,0) -- (90,10) {};
\draw[-,thin,dashed] (90,0) -- (90,20) {};
\node at (100,15) {$D$};
\node at (95,3) {$D,0$};
\draw[-,thin,dashed] (100,0) -- (100,10) {};
\node at (105,3) {$A,2$};
\draw[-,thick] (110,0) -- (110,10) {};
\draw[-,thin,dashed] (110,0) -- (110,20) {};


\draw[-,thick] (0,0) -- (0,-5) {};
\draw[-,thin,dashed] (0,-5) -- (0,-15) {};
\node at (10,-10) {$D$};
\draw[-,thick] (20,0) -- (20,-5) {};
\draw[-,thin,dashed] (20,-5) -- (20,-15) {};
\node at (25,-10) {$C$};
\draw[-,thick] (30,0) -- (30,-5) {};
\draw[-,thin,dashed] (30,-5) -- (30,-15) {};
\node at (40,-10) {$B$};
\draw[-,thick] (50,0) -- (50,-5) {};
\draw[-,thin,dashed] (50,-5) -- (50,-15) {};
\node at (80,-10) {$A$};
\draw[-,thick] (110,0) -- (110,-5) {};
\draw[-,thin,dashed] (110,-5) -- (110,-15) {};

\end{tikzpicture}}
\hspace{0.5 cm}
{\begin{tikzpicture}[scale=0.1]



\draw[-,thick] (0,0) -- (40,0) {};
\draw[-,thick] (0,0) -- (0,5) {};
\node at (5,45) {$A$};
\draw[-,thick] (10,0) -- (10,5) {};
\node at (15,45) {$B$};
\draw[-,thick] (20,0) -- (20,5) {};
\node at (25,45) {$D$};
\draw[-,thick] (30,0) -- (30,5) {};
\node at (35,45) {$C$};
\draw[-,thick] (40,0) -- (40,5) {};

\draw[-,thin,dashed] (0,-5) -- (0,0) {};
\node at (5,-5) {$D$};
\draw[-,thin,dashed] (10,-5) -- (10,0) {};
\node at (15,-5) {$A$};
\draw[-,thin,dashed] (20,-5) -- (20,0) {};
\node at (25,-5) {$C$};
\draw[-,thin,dashed] (30,-5) -- (30,0) {};
\node at (35,-5) {$B$};
\draw[-,thin,dashed] (40,-5) -- (40,0) {};

\draw[-,thin] (0,0) -- (0,30) {};
\node at (5,3) {$A,0$};
\draw[-,thin] (0,10) -- (10,10) {};
\node at (5,13) {$A,1$};
\draw[-,thin] (0,20) -- (10,20) {};
\node at (5,23) {$A,2$};
\draw[-,thin] (0,30) -- (10,30) {};
\draw[-,thin] (10,0) -- (10,30) {};

\draw[-,thin] (10,10) -- (20,10) {};
\node at (15,3) {$A,3$};
\draw[-,thin] (10,20) -- (20,20) {};
\node at (15,13) {$B,0$};

\draw[-,thin] (20,0) -- (20,40) {};
\node at (25,3) {$C,1$};
\draw[-,thin] (20,10) -- (30,10) {};
\node at (25,13) {$C,2$};
\draw[-,thin] (20,20) -- (30,20) {};
\node at (25,23) {$C,3$};
\draw[-,thin] (20,30) -- (30,30) {};
\node at (25,33) {$D,0$};
\draw[-,thin] (20,40) -- (30,40) {};
\draw[-,thin] (30,0) -- (30,40) {};

\draw[-,thin] (30,10) -- (40,10) {};
\node at (35,3) {$B,1$};
\draw[-,thin] (30,20) -- (40,20) {};
\node at (35,13) {$C,0$};
\draw[-,thin] (40,0) -- (40,20) {};

\end{tikzpicture}}
\end{center}
\caption{The dynamical partition $\cP(T_\gamma,5)$, where 
$
T_\gamma=T(\pi,\lambda_\gamma)
$ 
in terms of $\pi$, $\lambda^{(\gamma)}$ and $\gamma:\pi\to\pi^{(\ast)}$ defined above in \S~\ref{SectionConfigurations}. The left endpoints of the atoms of $\cP(T_\gamma,5)$ are the points in the standard $\gamma$-configuration $\cV^{(\gamma)}$. They form an unique orbit of length $N=11$ under the action of $T_\gamma$. The points $\underline{v([\alpha,0])}$ for $\alpha\in\cA$ are the critical points of $T_\gamma$.}
\label{FigureExampleConfiguration}
\end{figure}

\medskip

For any $[\alpha,i]\in\cI(\gamma)$ define the rational point 
$$
\underline{v([\alpha,i])}:=
T_\gamma^i\big(u^t_\alpha(T_\gamma)\big)\in [0,1)\cap\QQ,
$$
so that in particular the points $\underline{v([\alpha,0])}$ and $\underline{v([\alpha,1])}$ for $\alpha\in\cA$ correspond respectively to the critical values and the critical points of $T_\gamma$. The \emph{reference $\gamma$-configuration} $\cV^{(\gamma)}$ is the set of $N$ points in $[0,1)\cap\QQ$ defined by 
$$
\cV^{(\gamma)}:=
\{\underline{v([\alpha,i])};[\alpha,i]\in\cI(\gamma)\}.
$$
In particular, modulo the equivalence relation on $\cA\times\ZZ$ established by Equation~\eqref{EquationIdentificationLabelsCriticalPoints} we have 
$$
T_\gamma\big(\underline{v([\alpha,i])}\big)=
\underline{v([\alpha,i+1])}.
$$

\begin{definition}
\label{DefinitionGammaConfiguration}
A $\gamma$-configuration is a set $\cV\subset[0,1)$ of $N$ distinct points $v([\alpha,i])$ labeled by classes  
$
[\alpha,i]\in\cI(\gamma)
$, 
where $v([\alpha_0,0])=0$ for the letter $\alpha_0$ with $\pi_t(\alpha_0)=1$ and moreover 
$$
v([\alpha,i])<v([\beta,j])
\quad
\Leftrightarrow
\quad
\underline{v([\alpha,i])}<\underline{v([\beta,j])}
$$
for any other $[\alpha,i]$ and $[\beta,j]$ in $\cI(\gamma)$. The \emph{configuration space} 
$
\cO(\gamma)
$ 
is the set of all $\gamma$-configurations $\cV$.
\end{definition}

In other words, the points in a $\gamma$-configuration $\cV$ have the same geometrical order as the points in the standard $\gamma$-configuration $\cV^{(\gamma)}$. In particular the latter is a $\gamma$-configuration. Recalling that $\Delta^N$ denotes the open standard simplex in $\RR^N$, and denoting by 
$
\{\cV^{(\gamma)}\}
$ 
the singleton whose only element is the standard $\gamma$-configuration $\cV^{(\gamma)}$, we have 
$$
\cO(\gamma)=\{\cV^{(\gamma)}\}\times\Delta^N.
$$

\subsection{Definition of the Thurston map} 
\label{SectionDefinitionThurstonMap}

We define a map  
$
\cO(\gamma)\to\Delta^\cA
$, 
$
\cV\mapsto\tau=\tau(\cV)
$, 
where for any $\cV\in\cO(\gamma)$ and $\alpha\in\cA$ the entry $\tau_\alpha$ of the vector $\tau=\tau(\cV)\in\Delta^\cA$ is given by 
\begin{equation}
\label{EquationTauFunctionOfConfiguration}
\left\{
\begin{array}{l}
\tau_\alpha:=1-v([\alpha,1])
\quad
\textrm{ if }
\quad
\pi_t(\alpha)=d
\\
\tau_\alpha:=v([\alpha^\ast,1])-v([\alpha,1])
\quad
\textrm{ where }
\quad
\pi_t(\alpha^\ast)=\pi_t(\alpha)+1.
\end{array}
\right.
\end{equation}

Fix a full family 
$
(f_\tau)_{\tau\in\Delta^\cA}
$ 
of GIETs over $\pi$, parametrized by a map 
$
F:\Delta^\cA\to\cG\big(\pi,[0,1)\big)
$ 
as in Definition~\ref{DefinitionFullFamily}. Fix a configuration $\cV\in\cO(\gamma)$ and let $\tau(\cV)\in\Delta^\cA$ be given by Equation~\eqref{EquationTauFunctionOfConfiguration}. Property (2) in Definition~\ref{DefinitionFullFamily}, together with Equation~\eqref{EquationMarkingCriticalValues} and Equation~\eqref{EquationTauFunctionOfConfiguration}, imply that 
$
f_{\tau(\cV)}:[0,1)\to[0,1)
$ 
is the only element in 
$
(f_\tau)_{\tau\in\Delta^\cA}
$ 
having its critical values at points $v([\alpha,1])$ with $\alpha\in\cA$. Let
$
\cV'=\{v'([\alpha,i]);[\alpha,i]\in\cI(\gamma)\}\subset[0,1)
$ 
be the set of points defined by 
\begin{equation}
\label{EquationDefinitionThurstonMap}
v'([\alpha,i]):=f_\tau^{-1}\big(v([\alpha,i+1])\big).
\end{equation}

\begin{remark}
\label{RemarkIdentificationlabels}
Recall that labels $[\alpha,i]$ represent classes in the equivalence relation induced by Equation~\eqref{EquationIdentificationLabelsCriticalPoints}. In particular, for any class $[\alpha,0]$, denoting by $\beta$ the letter with $\pi_t(\beta)=\pi_b(\alpha)$, Equation~\eqref{EquationDefinitionThurstonMap} becomes explicitly 
$$
v'([\alpha,-1])=
v'([\alpha,q^{(r)}_\alpha+h(\beta,r)-h(\alpha,r)-1]):=
f_\tau^{-1}\big(v([\beta,0])\big).
$$
\end{remark}

\begin{proposition}
\label{PropositionDefinitionThurstonMap}
For any $\gamma$-configuration $\cV$ the set $\cV'$ defined above by Equation~\eqref{EquationDefinitionThurstonMap} is a $\gamma$-configuration. In other words we have a well defined map 
$$
\cT_\gamma:\cO(\gamma)\to\cO(\gamma)
\quad
\textrm{ ; }
\quad
\cV\mapsto\cV'
$$
called the \emph{Thurston map}.
\end{proposition}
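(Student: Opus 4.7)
The plan is to verify the three defining properties of a $\gamma$-configuration for the set $\cV'$: (a) that $v'$ is well-defined on the equivalence classes $\cI(\gamma)$, (b) the normalization $v'([\alpha_0,0])=0$, and (c) that the $N$ labeled points $v'([\alpha,i])$ are distinct and appear on $[0,1)$ in the same order as the points $\underline{v([\alpha,i])}$ of the reference configuration $\cV^{(\gamma)}$.

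For (a), I would observe that the relation defined by Equation~\eqref{EquationIdentificationLabelsCriticalPoints} depends only on the difference $i-j$, hence is invariant under the simultaneous shift $(\alpha,i)\mapsto(\alpha,i+1)$, $(\beta,j)\mapsto(\beta,j+1)$. Thus $(\alpha,i)\sim(\beta,j)$ forces $v([\alpha,i+1])=v([\beta,j+1])$, and so $v'([\alpha,i])=v'([\beta,j])$. For (b), the definition of $\tau(\cV)$ via Equation~\eqref{EquationTauFunctionOfConfiguration}, combined with property (2) of Definition~\ref{DefinitionFullFamily}, ensures that the critical values of $f_{\tau(\cV)}$ are exactly the points $\{v([\alpha,1]):\alpha\in\cA\}$; since $\pi_t(\alpha_0)=1$, the leftmost branch $f_\tau|_{I^t_{\alpha_0}}$ sends $0$ to $u^b_{\alpha_0}(f_\tau)=v([\alpha_0,1])$, and pulling back gives $v'([\alpha_0,0])=0$.

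The substance of the proof lies in (c), and I would argue atom by atom in the bottom partition. Since $\cV$ has the same order as $\cV^{(\gamma)}$, the critical values $v([\alpha,1])$ of $f_\tau$ appear in the same $\pi_b$-order as those of $T_\gamma$; consequently, for every class $[\alpha,i]\in\cI(\gamma)$, the point $v([\alpha,i+1])$ lies in $I^b_\beta(f_\tau)$ precisely when $\underline{v([\alpha,i+1])}$ lies in $I^b_\beta(T_\gamma)$, for the \emph{same} $\beta$. Each inverse branch $f_\tau^{-1}\colon I^b_\beta(f_\tau)\to I^t_\beta(f_\tau)$ is an orientation-preserving homeomorphism, so it preserves the relative order of the points inside the atom; and because $f_\tau$ and $T_\gamma$ share the combinatorial datum $\pi$, the atoms $I^t_\beta$ appear in $[0,1)$ in the same $\pi_t$-order for both maps. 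Comparing atom by atom with the trivial identity $T_\gamma^{-1}(\underline{v([\alpha,i+1])})=\underline{v([\alpha,i])}$ (which expresses that $\cV^{(\gamma)}$ is a fixed point of $\cT_\gamma$) yields the desired order-matching. Distinctness of the $N$ resulting points is then immediate, since the shift $[\alpha,i]\mapsto[\alpha,i+1]$ is a bijection of $\cI(\gamma)$ and $f_\tau^{-1}$ is injective.

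The main technical obstacle, in my view, is the bookkeeping in step (c), especially for points $v([\alpha,i+1])$ that coincide with critical values $v([\beta,1])$ of $f_\tau$, i.e.\ with shared endpoints of two right-open atoms of $\cP_b(f_\tau)$. Such coincidences occur precisely when $[\alpha,i+1]=[\beta,1]$ in the equivalence, which by Equation~\eqref{EquationIdentificationLabelsCriticalPoints} is the same condition as $[\alpha,i]=[\beta,0]$; one must check that the right-open convention used to define $I^b_\beta(f_\tau)$ and $I^t_\beta(f_\tau)$ together with the identifications of Equation~\eqref{EquationIdentificationLabelsCriticalPoints} is consistent, so that the two possible pre-images under $f_\tau^{-1}$ yield a single value on each equivalence class. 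Once this is handled uniformly, the order-comparison with $T_\gamma$ reduces to the purely combinatorial observation that $f_\tau$ and $T_\gamma$ share $\pi$, and the argument closes.
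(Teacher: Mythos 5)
Your proof is correct and takes essentially the same approach as the paper: pin down the images $v'([\alpha,0])=u^t_\alpha(f_\tau)$ via property (2) of Definition~\ref{DefinitionFullFamily}, then propagate the ordering atom by atom, using that the order in $\cV$ matches that in $\cV^{(\gamma)}$ to identify for each label the bottom interval $I^b_\chi(f_\tau)$ containing $v([\alpha,i+1])$, apply the orientation-preserving inverse branch, and compare across atoms by the shared $\pi_t$-order (the paper phrases this as a two-case split, same-atom versus different-atom, but the content is identical). One small slip: the identity $T_\gamma^{-1}(\underline{v([\alpha,i+1])})=\underline{v([\alpha,i])}$ says only that $\cV^{(\gamma)}$ is an orbit of $T_\gamma$, not that $\cV^{(\gamma)}$ is a fixed point of $\cT_\gamma$ (the latter holds only for the full family of genuine IETs, since $\cT_\gamma$ involves $f_{\tau(\cV)}$ rather than $T_\gamma$), but this aside plays no role in your argument.
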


The map $\cT_\gamma:\cO(\gamma)\to\cO(\gamma)$ depends of course on the specific full family 
$
(f_\tau)_{\tau\in\Delta^\cA}
$.

\begin{proof}
Let $\tau=\tau(\cV)$ be defined by Equation~\eqref{EquationTauFunctionOfConfiguration} and let 
$
f_\tau=F(\tau)
$ 
be the corresponding GIET. Observe first that 
$
v([\alpha,1])=u^b_\alpha(f_\tau)
$ 
for any $\alpha\in\cA$, according to Point (2) in Definition~\ref{DefinitionFullFamily}, therefore 
\begin{equation}
\label{EquationPropositionDefinitionThurstonMap}
v'([\alpha,0])=f_\tau^{-1}\big(u^b_\alpha(f_\tau)\big)=u^t_\alpha(f_\tau).
\end{equation}
Since $f_\tau$ and 
$
T_\gamma=T(\pi,\lambda^{(\gamma)})
$ 
have the same combinatorial datum $\pi$, then the equality above implies $v'([\alpha_0,0])=0$ for the letter with $\pi_t(\alpha_0)=1$. Moreover for all other critical points we have
$$
v'([\alpha,0])< v'([\beta,0])  
\quad
\Leftrightarrow
\quad
\underline{v([\alpha,0])}< \underline{v([\beta,0])}.
$$

Now consider any pair of different points $v([\alpha,i])$ and $v([\beta,j])$ in the configuration $\cV$ and assume without loss of generality that $v([\alpha,i])<v([\beta,j])$, which is equivalent to the condition 
$
\underline{v([\alpha,i])}<\underline{v([\beta,j])}
$. 
The proposition follows proving that $v'([\alpha,i])<v'([\beta,j])$. Below, we consider separately two cases. Before proceeding, recall that if $f:[0,1)\to[0,1)$ is a GIET then for any $\alpha\in\cA$ we denote $I^t_\alpha(f)$ and $I^b_\alpha(f)$ the intervals such that the restricted map in Equation~\eqref{EquationPiecewiseHomeomorphism} is an orientation preserving homeomorphism. In particular, for $T_\gamma=T(\pi,\lambda^{(\gamma)})$ and for any $\alpha\in\cA$ we have 
\begin{align*}
&
\min I^t_\alpha(T_\gamma)=\underline{v([\alpha,0])}
\quad,\quad
\sup I^t_\alpha(T_\gamma)=\underline{v([\alpha,0])}+\lambda^{(\gamma)}_\alpha
\\
&
\min I^b_\alpha(T_\gamma)=\underline{v([\alpha,1])}
\quad,\quad
\sup I^b_\alpha(T_\gamma)=\underline{v([\alpha,1])}+\lambda^{(\gamma)}_\alpha.
\end{align*}

Assume first that 
$
\underline{v([\alpha,i])}
$ 
and 
$
\underline{v([\beta,j])}
$ 
are in the same continuity interval of 
$
T_\gamma
$, 
that is there is a letter $\chi\in\cA$ such that  
$$
\min I^t_\chi(T_\gamma)
=
\underline{v([\chi,0])}
\leq
\underline{v([\alpha,i])}<\underline{v([\beta,j])}
<
\sup I^t_\chi(T_\gamma).
$$ 
In this case we have
\begin{align*}
\min I^b_\chi(T_\gamma)=
\underline{v([\chi,1])}
\leq
&
T_\gamma\big(\underline{v([\alpha,i])}\big)=
\underline{v([\alpha,i+1])}<
\\
&
\underline{v([\beta,j+1])}=
T_\gamma\big(\underline{v([\beta,j])}\big)
<
\sup I^b_\chi(T_\gamma).
\end{align*}
All the critical values of $f_\tau$ are points of the configuration $\cV$, moreover the geometrical order of the points in $\cV$ is the same as in $\cV^{(\gamma)}$, hence we have 
$$
\min I^b_\chi(f_\tau)
=
v([\chi,1])
\leq
v([\alpha,i+1])<v([\beta,j+1])
<
\sup I^b_\chi(f_\tau).
$$ 
It follows that $v([\alpha,i+1])$ and $v([\beta,j+1])$ belong to the same (right-open) continuity interval of $f^{-1}_\tau$, and thus 
$$
v'([\alpha,i])=
f_\tau^{-1}\big(v([\alpha,i+1])\big) 
<
f_\tau^{-1}\big(v([\beta,j+1])\big)
=
v'([\beta,j]).
$$

Assume now that 
$
\underline{v([\alpha,i])}
$ 
and 
$
\underline{v([\beta,j])}
$ 
belong to two different continuity intervals of $T_\gamma$, labelled respectively by letters $\chi_1$ and $\chi_2$ with $\pi_t(\chi_2)\geq\pi_t(\chi_1)+1$, that is  
\begin{align*}
\min I^t_{\chi_1}(T_\gamma)
\leq
&
\underline{v([\alpha,i])}<
\sup I^t_{\chi_1}(T_\gamma)
\leq
\min I^t_{\chi_2}(T_\gamma)
\leq
\underline{v([\beta,j])}<
\sup I^t_{\chi_2}(T_\gamma,0).
\end{align*}
In this case we have
\begin{align*}
&
\min I^b_{\chi_1}(T_\gamma)=
\underline{v([\chi_1,1])}\leq
\underline{v([\alpha,i+1])}<
\sup I^b_{\chi_1}(T_\gamma)
\\
&
\min I^b_{\chi_2}(T_\gamma)=
\underline{v([\chi_2,1])}\leq
\underline{v([\beta,j+1])}<
\sup I^b_{\chi_2}(T_\gamma).
\end{align*}
As in the previous case, since all the critical values of $f_\tau$ are points of the configuration $\cV$, and since the geometrical order of the point in $\cV$ is the same as in $\cV^{(\gamma)}$, we have 
\begin{align*}
&
\min I^b_{\chi_1}(f_\tau)=
v([\chi_1,1])\leq
v([\alpha,i+1])<
\sup I^b_{\chi_1}(f_\tau)
\\
&
\min I^b_{\chi_2}(f_\tau)=
v([\chi_2,1])\leq
v([\beta,j+1])<
\sup I^b_{\chi_2}(f_\tau).
\end{align*}
Recalling Equation~\eqref{EquationDefinitionThurstonMap} and arguing as in the previous case we get
\begin{align*}
&
\min I^t_{\chi_1}(f_\tau)=
v'([\chi_1,0])\leq
v'([\alpha,i])<
\sup I^t_{\chi_1}(f_\tau)
\\
&
\min I^t_{\chi_2}(f_\tau)=
v'([\chi_2,0])\leq
v'([\beta,j])<
\sup I^t_{\chi_2}(f_\tau).
\end{align*}
The proof is completed observing that 
$
\sup I^t_{\chi_1}(f_\tau)\leq \sup I^t_{\chi_2}(f_\tau)
$, 
because we proved yet that the critical points of $f_\tau$ have the same geometrical order as those of $T_\gamma$.
\end{proof}

\begin{lemma}
\label{LemmaContinuityThurstonMap}
The Thurston map 
$
\cT_\gamma:\cO(\gamma)\to\cO(\gamma)
$ 
defined by Equation~\eqref{EquationDefinitionThurstonMap} is continuous.
\end{lemma}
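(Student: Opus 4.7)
The plan is to express $\cT_\gamma$ as a finite composition of manifestly continuous maps. For each equivalence class $[\alpha,i]\in\cI(\gamma)$, Equation~\eqref{EquationDefinitionThurstonMap} gives $v'([\alpha,i])=f_{\tau(\cV)}^{-1}\big(v([\alpha,i+1])\big)$, so it suffices to verify continuity coordinate-by-coordinate. First, $\cV\mapsto\tau(\cV)$ is affine in the coordinates of $\cV$ by the explicit formula in Equation~\eqref{EquationTauFunctionOfConfiguration}, and $\tau\mapsto f_\tau=F(\tau)$ is continuous from $\Delta^\cA$ to $\cG(\pi,[0,1))$ by Property~(1) of Definition~\ref{DefinitionFullFamily}. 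Moreover $\cV\mapsto v([\alpha,i+1])$ is a coordinate projection on $\cO(\gamma)=\{\cV^{(\gamma)}\}\times\Delta^N\subset\RR^N$. All that remains is to verify that the evaluation of the relevant inverse branch of $f_\tau$ at this point depends continuously on both arguments.

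The combinatorial input that makes this clean is that every $\cV\in\cO(\gamma)$ carries its points in the same linear order as the reference configuration $\cV^{(\gamma)}$, while the critical values of $f_{\tau(\cV)}$ are \emph{exactly} the points $v([\beta,1])=u^b_\beta(f_{\tau(\cV)})$ for $\beta\in\cA$ (by Property~(2) of Definition~\ref{DefinitionFullFamily}). Consequently there is a unique letter $\beta=\beta([\alpha,i+1])\in\cA$, depending only on the labelling and independent of $\cV$, such that
$$
v([\alpha,i+1])\in\overline{I^b_\beta(f_{\tau(\cV)})}\quad\text{for every }\cV\in\cO(\gamma).
$$
Thus $f_{\tau(\cV)}^{-1}$ applied to $v([\alpha,i+1])$ unambiguously refers to the inverse of the fixed branch $f_{\tau(\cV),\beta}:I^t_\beta(f_{\tau(\cV)})\to I^b_\beta(f_{\tau(\cV)})$, and we only have to check continuity of this single-branch inversion.

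Finally, if $\cV_n\to\cV$ in $\cO(\gamma)$, the previous two steps give $f_{\tau(\cV_n)}\to f_{\tau(\cV)}$ in the metric of Equation~\eqref{EquationDinstanceGIETs}, which means that each closed graph $\overline{G_{f_{\tau(\cV_n),\beta}}}\subset[0,1]^2$ converges in Hausdorff distance to $\overline{G_{f_{\tau(\cV),\beta}}}$. Reflecting across the diagonal, the same Hausdorff convergence holds for the graphs of the inverse branches, which are monotone continuous homeomorphisms between closed intervals whose endpoints also converge (since those endpoints are $u^{t/b}_\beta(f_{\tau(\cV_n)})$, themselves continuous in $\tau$). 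A standard fact about monotone continuous functions on intervals is that Hausdorff convergence of graphs with convergent endpoints implies uniform convergence on compact subintervals, so $f_{\tau(\cV_n),\beta}^{-1}(y_n)\to f_{\tau(\cV),\beta}^{-1}(y)$ whenever $y_n\to y$ with all $y_n$, $y$ lying in the respective domains. Applying this with $y_n=v_n([\alpha,i+1])\to v([\alpha,i+1])=y$ gives $v'_n([\alpha,i])\to v'([\alpha,i])$, i.e. $\cT_\gamma(\cV_n)\to\cT_\gamma(\cV)$.

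The only subtlety I anticipate is the degenerate case $[\alpha,i+1]=[\beta,1]$, where $v([\alpha,i+1])$ coincides with the left endpoint $u^b_\beta(f_{\tau(\cV)})$ of $I^b_\beta(f_{\tau(\cV)})$ and the inverse image equals the critical point $u^t_\beta(f_{\tau(\cV)})$; but here the evaluation point and the endpoint are literally the same function of $\cV$, so continuity follows from Equation~\eqref{EquationPropositionDefinitionThurstonMap} and continuity of critical points in $\tau$. The main technical point is thus really the uniform convergence statement for branches, which I expect to be straightforward but worth being explicit about.
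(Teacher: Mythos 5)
Your proposal is correct and follows essentially the same route as the paper's own proof: reduce to coordinate-wise continuity, treat the labels $[\alpha,0]$ (equivalently $[\alpha,i+1]=[\beta,1]$) via the explicit identity $v'([\alpha,0])=u^t_\alpha(f_{\tau(\cV)})$ from Equation~\eqref{EquationPropositionDefinitionThurstonMap}, and for the remaining labels use the fixed ordering of a $\gamma$-configuration together with Property~(2) of Definition~\ref{DefinitionFullFamily} to pin down the branch $\beta$ of $f_{\tau(\cV)}^{-1}$ being applied. The only difference is stylistic: where the paper runs a triangle-inequality estimate with the intermediate term $f_{\tau(\cV)}^{-1}\big(\widetilde{v}([\alpha,i+1])\big)$ and appeals loosely to continuity of $\tau\mapsto f_\tau^{-1}$, you make explicit the underlying fact that Hausdorff convergence of the closed branch graphs (which is what the metric in Equation~\eqref{EquationDinstanceGIETs} records) implies pointwise convergence of the inverse branch at points that stay in the interior of the relevant continuity interval. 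That is a reasonable thing to spell out, since the paper glides over it; just be careful to phrase the uniform-convergence claim so that it covers evaluation points $y_n\to y$ with $y$ possibly the left endpoint of its continuity interval, or else note (as you do) that the endpoint case $[\alpha,i+1]=[\beta,1]$ is handled by the explicit critical-point formula rather than the generic argument.
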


\begin{proof}
For a configuration $\cV\in\cO(\gamma)$ set 
$
\cV':=\widehat{\cT}_\gamma(\cV)
$, 
then for $[\alpha,i]\in\cI(\gamma)$ let $v([\alpha,i])$ and $v'([\alpha,i])$ be the points in $\cV$ and $\cV'$ respectively. We prove that $v'([\alpha,i])$ depends continuously on $\cV$ for any 
$
[\alpha,i]\in\cI(\gamma)
$. 
Observe first that this is true for any point $v'([\alpha,0])$ with $\alpha\in\cA$, according to Equation~\eqref{EquationPropositionDefinitionThurstonMap} and the continuity of the composition 
$
\cV\mapsto\tau(\cV)\mapsto f_{\tau(\cV)}
$. 
In order to treat all other points, let 
$
\epsilon=\epsilon(\cV)>0
$ 
be small enough so that the compact set
$$
K:=[0,1)\setminus
\bigsqcup_{\alpha\in\cA}B\big(v([\alpha,1]),\epsilon\big)
$$
contains all the points $v([\alpha,i])$ of $\cV$ with $i\not=1$. Let 
$
\Omega=\Omega(\cV,\epsilon)\subset\cO(\gamma)
$ 
be an open set such that for any $\gamma$-configuration $\widetilde{\cV}\in\Omega$, denoting by 
$
\widetilde{v}([\alpha,i])
$ 
with $[\alpha,i]\in\cI(\gamma)$ the points in $\widetilde{\cV}$, we have
$$
\widetilde{v}([\alpha,1])\not\in K
\quad
\textrm{ and }
\quad
\widetilde{v}([\alpha,i])\in K
\quad
\textrm{ for }
\quad
i\not=1.
$$
Set 
$
\widetilde{\cV}':=\cT_\gamma(\widetilde{\cV})
$ 
and denote by $\widetilde{v}'([\alpha,i])$ the points in $\widetilde{\cV}'$. Finally let  
$
\tau:=\tau(\cV)
$ 
and 
$
\widetilde{\tau}:=\tau(\widetilde{\cV})
$ 
be the elements in $\Delta^\cA$ given by Equation~\eqref{EquationTauFunctionOfConfiguration}. For any $[\alpha,i]\in\cI(\gamma)$ with $i\not=0$ we have 
\begin{align*}
&
|v'([\alpha,i])-\widetilde{v}'([\alpha,i)]|
=
\big|
f_\tau^{-1}\big(v([\alpha,i+1])\big)-
f_{\widetilde{\tau}}^{-1}\big(\widetilde{v}([\alpha,i+1])\big)
\big|\leq
\\
&
\big|
f_\tau^{-1}\big(v([\alpha,i+1])\big)-
f_{\tau}^{-1}\big(\widetilde{v}([\alpha,i+1])\big)
\big|
+
\big|
f_\tau^{-1}\big(\widetilde{v}([\alpha,i+1])\big)-
f_{\widetilde{\tau}}^{-1}\big(\widetilde{v}([\alpha,i+1])\big)
\big|.
\end{align*}
The fist term in the second line is small because 
$
v([\alpha,i+1])\big)
$ 
and 
$
\widetilde{v}([\alpha,i+1])
$ 
are two nearby points in a continuity interval of $f_{\tau}^{-1}$. The second term in the second line is small because the point 
$
\widetilde{v}([\alpha,i+1])
$ 
is in an interval where both $f_\tau^{-1}$ and $f_{\widetilde{\tau}}^{-1}$ acts continuously and moreover the map $\tau\mapsto f_{\tau}^{-1}$ is continuous by Point (1) in Definition~\ref{DefinitionFullFamily}. 
\end{proof}

\subsection{Fixed points for the Thurston map} 
\label{SectionFixedPointsThurstonMap}

In \S~\ref{SectionExistenceFixedPointThurstonMap} below we prove the existence of a $\gamma$-configuration which is fixed by the Thurston map 
$
\cT_\gamma:\cO(\gamma)\to\cO(\gamma)
$. 
Lemma~\ref{LemmaFixedPointsThurstonAndRauzyPath} below explains why fixed points of the Thurston map are relevant.

\begin{lemma}
\label{LemmaFixedPointsThurstonAndRauzyPath}
Let $\cV\in\cO(\gamma)$ be a $\gamma$-configuration such that $\cT_\gamma(\cV)=\cV$ and let $\tau=\tau(\cV)$ be given by Equation~\eqref{EquationTauFunctionOfConfiguration}. Then $f_\tau$ has the same Rauzy path as $T_\gamma$ up to time $r$, that is 
$$
\gamma(f_\tau,r)=\gamma=\gamma(T_\gamma,r).
$$
\end{lemma}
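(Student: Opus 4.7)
The plan is to invoke Proposition~\ref{PropositionPartitionDeterminesRauzyPath}, which reduces the lemma to establishing the combinatorial equivalence $\cP(f_\tau,r) \sim \cP(T_\gamma,r)$. Writing $f := f_\tau$, the fixed-point equation $\cT_\gamma(\cV) = \cV$ combined with Equation~\eqref{EquationDefinitionThurstonMap} yields
\[
f(v([\alpha,i])) = v([\alpha,i+1]) \quad\text{for every } [\alpha,i]\in\cI(\gamma),
\]
while Equation~\eqref{EquationPropositionDefinitionThurstonMap} specialized at the fixed point identifies the critical points of $f$ as $u^t_\alpha(f) = v([\alpha,0])$. Thus $f$ permutes the $N$ points of $\cV$ in exactly the cyclic combinatorial fashion in which $T_\gamma$ permutes $\cV^{(\gamma)}$, and the critical points of the two maps sit at matching combinatorial positions.

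As indicated in the caption of Figure~\ref{FigureExampleConfiguration}, the atoms of $\cP(T_\gamma,r)$ are precisely the $N$ right-open intervals delimited by consecutive points of $\cV^{(\gamma)}$, the atom $I(T_\gamma,r,\alpha,i)$ having left endpoint $\underline{v([\alpha,i-h(\alpha,r)])}$. By Definition~\ref{DefinitionGammaConfiguration} the points of $\cV$ are in the same linear order as those of $\cV^{(\gamma)}$, so the piecewise-linear increasing bijection $\varphi:[0,1)\to[0,1)$ sending $\underline{v([\alpha,i])} \mapsto v([\alpha,i])$ is well-defined. It therefore suffices to show that $\cP(f,r)$ likewise consists of the $N$ right-open intervals delimited by consecutive points of $\cV$, with labels matching those of $\cP(T_\gamma,r)$ via $\varphi$.

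I propose to prove this by induction on $k \in \{0,\dots,r\}$ with the compound hypothesis that (i) $f$ admits $k$ steps of Rauzy induction with $\gamma(f,k) = \gamma(T_\gamma,k)$, and (ii) the atoms of $\cP(f,k)$ are the right-open intervals delimited by consecutive elements of the subset $\cV_k \subseteq \cV$ corresponding (via the canonical label-preserving bijection $\cV^{(\gamma)} \to \cV$) to the left endpoints of atoms of $\cP(T_\gamma,k)$. The base case $k=0$ is immediate, since $\cP(f,0) = \cP_t(f)$ has the critical points $\{v([\alpha,0])\}_{\alpha\in\cA}$ as left endpoints by the first paragraph. For the inductive step, the $(k{+}1)$-th Rauzy arrow of $f$ is determined by the sign of $u^t_{\alpha_t^{(k)}}(f^{(k)}) - u^b_{\alpha_b^{(k)}}(f^{(k)})$; by (i) and (ii) both of these are iterates of critical points of $f$ lying in $\cV$, so their geometric order agrees with that of the corresponding points in $\cV^{(\gamma)}$, which in turn dictates the $(k{+}1)$-th arrow of $\gamma(T_\gamma,\infty)$. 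The refinement of $\cP(f,k)$ into $\cP(f,k+1)$, described explicitly in the one-step case of the proof of Proposition~\ref{PropositionPartitionDeterminesRauzyPath}, adds precisely one new subdivision point which is an iterate of a critical point of $f$, hence an element of $\cV$ with the expected label. This closes the induction, and feeding the equality at $k = r$ into Proposition~\ref{PropositionPartitionDeterminesRauzyPath} yields $\gamma(f_\tau,r) = \gamma(T_\gamma,r) = \gamma$.

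The main obstacle I anticipate is the combinatorial bookkeeping in the inductive step: tracking how the identifications~\eqref{EquationIdentificationLabelsCriticalPoints} and the shifts $h(\alpha,k)$ at intermediate level $k$ relate the labels used at that level to those at the terminal level $r$, and verifying that at each step the unique new subdivision point is precisely the element of $\cV$ predicted by the dynamics of $T_\gamma$, rather than some other point produced by $f$. Once this is set up, the order-preserving property at the heart of Definition~\ref{DefinitionGammaConfiguration} propagates the combinatorial equivalence from level $k$ to level $k{+}1$ painlessly.
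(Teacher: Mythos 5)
Your overall strategy is the same as the paper's: reduce via Proposition~\ref{PropositionPartitionDeterminesRauzyPath} to showing $\cP(f_\tau,r)\sim\cP(T_\gamma,r)$, then use the fixed-point identity $f_\tau(v([\alpha,i]))=v([\alpha,i+1])$ to recognize $\cV$ as a closed $f_\tau$-orbit consisting of critical-point iterates with the same dynamical and geometrical order as $\cV^{(\gamma)}$. The paper's proof essentially stops at that observation, asserting directly that the two families of points are the left endpoints of the atoms of the two dynamical partitions; your explicit induction on $k$ is a legitimate (and arguably clarifying) way to verify in addition that $f_\tau$ does admit $r$ Rauzy steps with the prescribed combinatorics, a point the paper leaves implicit.

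There is, however, a concrete error in your inductive step. You claim that passing from $\cP(f,k)$ to $\cP(f,k+1)$ ``adds precisely one new subdivision point.'' This is true only for $k=0$, which is the case worked out explicitly in the proof of Proposition~\ref{PropositionPartitionDeterminesRauzyPath}, and you appear to have extrapolated from it. In general, if $\alpha$ is the winner and $\beta$ the loser of the $(k+1)$-th arrow, then $q^{(k+1)}_\beta=q^{(k)}_\beta+q^{(k)}_\alpha$ with all other entries unchanged, so the refinement adds $q^{(k)}_\alpha$ new atoms: inside $I^{(k)}$ only the top interval labeled by the winner is cut, but pushing forward by $f$ splits each of the $q^{(k)}_\alpha$ atoms of $\cP(f,k)$ carrying the winner's label. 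The new endpoints are the iterates $f^{j}\big(u^b_{\alpha_b^{(k)}}(f^{(k)})\big)$ (top case) or $f^{j}\big(u^t_{\alpha_t^{(k)}}(f^{(k)})\big)$ (bottom case) for $j$ in the appropriate range; all of them lie in $\cV$ since $\cV$ is $f_\tau$-closed and contains the critical points, and all inherit the expected label from the identical order structure. So the argument closes once the count is corrected, but as written the inductive step misrepresents what the refinement does.
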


\begin{proof}
It is enough to prove 
$
\cP(f_\tau,r)\sim\cP(T_\gamma,r)
$, 
according to Proposition~\ref{PropositionPartitionDeterminesRauzyPath}. Observe that if 
$
\cT_\gamma(\cV)=\cV
$ 
then Equation~\eqref{EquationDefinitionThurstonMap} implies 
$
v([\alpha,i+1])=f_\tau\big(v([\alpha,i])\big)
$ 
for any $[\alpha,i]\in\cI(\gamma)$, where we recall that the identifications between labels in $\cA\times\ZZ$ are given in Equation~\eqref{EquationIdentificationLabelsCriticalPoints}. It follows that the points in $\cV$ form a closed orbit of $f_\tau$, and their dynamical order is the same as the cyclic dynamical order of the points in $\cV^{(\gamma)}$ under the action of $T_\gamma$. By assumption $\cV$ contains the critical values $u^b_\alpha(f_\tau)$ of $f_\tau$, and thus also the critical points 
$
u^t_\alpha(f_\tau)=f_\tau^{-1}\big(u^b_\alpha(f_\tau)\big)
$, 
where $\alpha\in\cA$. Thereofre $\cV$ equals the set of points 
$$
f_\tau^i\big(u^t_\alpha(f_\tau)\big)
\quad,\quad
\alpha\in\cA
\quad,\quad
-h(\alpha,r)\leq i\leq q^{(r)}_\alpha-h(\alpha,r)-1,
$$
where the integers $h(\alpha,r)$ are defined right before Equation~\eqref{EquationIdentificationLabelsCriticalPoints}. Moreover these points have the same geometrical order as the points of the standard configuration $\cV^{(\gamma)}$, which are
$$
T_\gamma^i\big(u^t_\alpha(T_\gamma)\big)
\quad,\quad
\alpha\in\cA
\quad,\quad
-h(\alpha,r)\leq i\leq q^{(r)}_\alpha-h(\alpha,r)-1.
$$
These two families of points are the endpoints of the intervals in the partitions $\cP(f_\tau,r)$ and 
$\cP(T_\gamma,r)$ respectively, thus the Lemma follows.
\end{proof}

\section{Existence of fixed points for the Thurston map}
\label{SectionExistenceFixedPointThurstonMap}

In this section we prove Theorem~\ref{TheoremRealizationRauzyPath} via Proposition~\ref{PropositionCyclicBehaviour} below. We use the notation introduced in \S~\ref{SectionThurstonMap}.  In particular, we fix an admissible combinatorial datum $\pi$ and we assume that the Rauzy class $\cR$ of $\pi$ contains a cyclic combinatorial datum $\pi^{(\ast)}$. Then we fix a finite Rauzy path 
$
\gamma:\pi\to\pi^{(\ast)}
$.

\subsection{The boundary of the configuration space}
\label{SectionBoundaryConfigurationSpace}

Recall that configuration space $\cO(\gamma)$ is identified with the open simplex 
$
\{\cV^{(\gamma)}\}\times\Delta^N
$. 
A \emph{degenerate $\gamma$-configuration} is a set $\cV$ of points $v([\alpha,i])\in[0,1]$ labeled by classes 
$
[\alpha,i]\in\cI(\gamma)
$ 
such that there exist $\gamma$-configurations $\cV_n\in\cO(\gamma)$, where $n\in\NN$ and 
$
\cV_n=\{v_n([\alpha,i]);[\alpha,i]\in\cI(\gamma)\}
$,  
with 
$$
v([\alpha,i])=\lim_{n\to\infty}v_n([\alpha,i])
\quad
\textrm{ for any }
\quad
[\alpha,i]\in\cI(\gamma)
$$
We also require that degenerate configurations are not configurations in the standard sense, that is there must be either two sequences $v_n([\alpha,i])$ and $v_n([\beta,j])$ with 
$
[\alpha,i]\not=[\beta,j]
$ 
converging to the same limit for $n\to\infty$, or some sequence with $v_n([\alpha,i])\to1$ for $n\to\infty$. Such degenerate configurations are elements of the boundary $\partial\cO(\gamma)$ of the simplex $\cO(\gamma)$, whose faces are described combinatorially below.

\medskip

Let $[\alpha_m,i_m]\in\cI(\gamma)$ be the label such that 
$
\underline{v([\alpha_m,i_m])}
$ 
is the rightmost element in the standard configuration $\cV^{(\gamma)}$, that is 
$
\underline{v([\alpha_m,i_m])}=1-N^{-1}
$, 
or equivalently the label such that for any $[\alpha,i]\in\cI(\gamma)$ we have
$$
\underline{v([\alpha,i])}\leq\underline{v([\alpha_m,i_m])}.
$$
The set $\cI(\gamma)$ admits a \emph{geometrical order}, where for any $[\alpha,i]\in\cI(\gamma)$ with 
$
[\alpha,i]\not=[\alpha_m,i_m]
$ 
we define $[\alpha,i]^\ast$ as the element of $\cI(\gamma)$ such that  
$$
\underline{v([\alpha,i]^\ast)}=
\min\{\underline{v([\beta,j])}
\textrm{ ; }
\underline{v([\beta,j])}>\underline{v([\alpha,i])}\}
\quad
\Leftrightarrow
\quad
\underline{v([\alpha,i]^\ast)}=\underline{v([\alpha,i])}+\frac{1}{N}.
$$

First, define the \emph{maximal face} 
$
\partial^{[\alpha_m,i_m]}\cO(\gamma)
$ 
as the set of those degenerate $\gamma$-configurations 
$
\cV=\{v([\beta,j])\in[0,1],[\beta,j]\in\cI(\gamma)\}
$ 
with 
$$
v([\alpha_m,i_m])=1.
$$

For any $[\alpha,i]\in\cI(\gamma)$ with 
$
[\alpha,i]\not=[\alpha_m,i_m]
$,  
define the face $\partial^{[\alpha,i]}\cO(\gamma)$ as the set of degenerate $\gamma$-configurations 
$
\cV=\{v([\beta,j])\in[0,1],[\beta,j]\in\cI(\gamma)\}
$ 
such that 
$$
v([\alpha,i])=v([\alpha,i]^\ast).
$$

Finally define
$$
\overline{\cO(\gamma)}:=
\cO(\gamma)
\cup
\bigcup_{[\alpha,i]\in\cI(\gamma)}\partial^{[\alpha,i]}\cO(\gamma).
$$

Let 
$
\cT_\gamma:\cO(\gamma)\to\cO(\gamma)
$ 
be the Thurston map defined in \S~\ref{SectionDefinitionThurstonMap}. The main result of this section is  Proposition~\ref{PropositionCyclicBehaviour} below, which is used in the next \S~\ref{SectionProofTheoremRealizationRauzyPath} to prove Theorem~\ref{TheoremRealizationRauzyPath}. The proof of the Proposition is the subject of the remaining part of the section.

\begin{proposition}
\label{PropositionCyclicBehaviour}
There exists a map 
$
\widehat{\cT}_\gamma:\overline{\cO(\gamma)}\to\overline{\cO(\gamma)}
$ 
satisfying the properties below.
\begin{enumerate}
\item
We have
$
\widehat{\cT}_\gamma(\cV)=\cT_\gamma(\cV)
$ 
for any $\cV\in\cO(\gamma)$, that is $\widehat{\cT}_\gamma$ extends the map $\cT_\gamma$ to 
$
\overline{\cO(\gamma)}
$.
\item
The extended map $\widehat{\cT}_\gamma$ is continuous on 
$
\overline{\cO(\gamma)}
$.
\item
The extended map $\widehat{\cT}_\gamma$ leaves $\partial\cO(\gamma)$ invariant. Moreover for any 
$
[\alpha,i]\in\cI(\gamma)
$ 
we have 
$$
\widehat{\cT}_\gamma
\big(\partial^{[\alpha,i]}\cO(\gamma)\big)
\subset 
\partial^{[\alpha,i-1]}\cO(\gamma). 
$$
\end{enumerate}
\end{proposition}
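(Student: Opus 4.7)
I would define $\widehat{\cT}_\gamma$ by extending each ingredient of $\cT_\gamma$ continuously to the closed simplex $\overline{\cO(\gamma)}$ and then verify the three stated properties; Property~(3) is the substantive point and requires a case analysis on the combinatorial type of the degenerating face.

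\emph{Construction and basic properties.} The formula in Equation~\eqref{EquationTauFunctionOfConfiguration} is affine in the coordinates $v([\chi,1])$, so $\cV\mapsto\tau(\cV)$ extends continuously to $\overline{\cO(\gamma)}\to\overline{\Delta^\cA}$. Composing with the continuous extension $\widehat{F}$ from Property~(3) of Definition~\ref{DefinitionFullFamily} yields a continuous map $\cV\mapsto D_\cV:=\widehat{F}(\tau(\cV))\in\widehat{\cG}(\pi,[0,1))$, possibly a degeneration. For each $[\alpha,i]\in\cI(\gamma)$ set
\[
v'([\alpha,i]):=\lim_{n\to\infty}f_{\tau(\cV_n)}^{-1}\bigl(v_n([\alpha,i+1])\bigr),
\]
along any sequence $\cV_n\in\cO(\gamma)$ with $\cV_n\to\cV$. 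The ordering constraint in Definition~\ref{DefinitionGammaConfiguration} forces $v_n([\alpha,i+1])$ to approach its limit from a side uniquely determined by the face on which $\cV$ lies, so the appropriate one-sided limit of the preimage applies and the definition is unambiguous. Property~(1) holds by construction, while Property~(2) reduces to continuity of this one-sided preimage operation $(D,v)\mapsto\lim_{y\to v}D^{-1}(y)$, which follows from the continuity of $\widehat{F}$ together with the fixed combinatorial type of the approach on each boundary stratum.

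\emph{Property~(3).} Consider $\cV\in\partial^{[\alpha,i]}\cO(\gamma)$ with $[\alpha,i]\neq[\alpha_m,i_m]$, set $[\beta,j]:=[\alpha,i]^*$ (so $v([\alpha,i])=v([\beta,j])$), and observe that $v'([\alpha,i-1])=v'([\beta,j-1])$ by construction; it remains to identify $[\beta,j-1]$ with $[\alpha,i-1]^*$. If $[\beta,j]$ is \emph{not} a critical-value label of $T_\gamma$ (i.e., not of the form $[\chi,1]$ for any $\chi\in\cA$), then no critical value lies between $\underline{v([\alpha,i])}$ and $\underline{v([\beta,j])}$, the two points lie in a common continuity interval of $T_\gamma^{-1}$, and $T_\gamma^{-1}$ preserves adjacency, giving the claim. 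If instead $[\beta,j]=[\chi,1]$ for some $\chi\in\cA$, then $\underline{v([\alpha,i])}=u^b_\chi-1/N$, and a direct computation gives $\underline{v([\alpha,i-1])}=u^t_{(\chi^-)'_t}-1/N$, where $\chi^-$ is the predecessor of $\chi$ in $\pi_b$ and $(\chi^-)'_t$ its successor in $\pi_t$; hence $[\alpha,i-1]^*=[(\chi^-)'_t,0]$, and the left-limit yields $v'([\alpha,i-1])=u^t_{(\chi^-)'_t}(f_{\tau(\cV)})=v'([(\chi^-)'_t,0])$, landing on the prescribed face $\partial^{[\alpha,i-1]}\cO(\gamma)$. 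The maximal face $[\alpha,i]=[\alpha_m,i_m]$ is treated analogously, with the right endpoint~$1$ playing the role of $u^b_\chi$; admissibility of $\pi$ rules out the trivial alternative and forces the critical-value-type analysis.

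\emph{Main obstacle.} The crux is the critical-value sub-case (and the maximal face), where $f_\tau^{-1}$ is genuinely discontinuous at the relevant point: both the choice of the correct one-sided limit and the combinatorial identity $[\alpha,i-1]^*=[(\chi^-)'_t,0]$ in the standard configuration $\cV^{(\gamma)}$ are essential. This identity is not purely formal — it uses the equal-length structure of the atoms of the dynamical partition $\cP(T_\gamma,r)$ (all of size $1/N$) and the identifications of Equation~\eqref{EquationIdentificationLabelsCriticalPoints} — and it is precisely what makes the cyclic shift $\partial^{[\alpha,i]}\mapsto\partial^{[\alpha,i-1]}$ go through.
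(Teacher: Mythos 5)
Your construction of $\widehat{\cT}_\gamma$ via limits $v'([\alpha,i]):=\lim_n f_{\tau(\cV_n)}^{-1}\bigl(v_n([\alpha,i+1])\bigr)$ along approximating sequences is exactly the paper's Equation~\eqref{EquationDefinitionExtensionThurstonMap}, and the three-part verification (well-definedness, continuity, cyclic shift on faces) follows the same architecture as the paper's Lemmas~\ref{LemmaDefinitionExtensionThurstonMap}, \ref{LemmaContinuityExtendedThurstonMap} and \ref{LemmaCyclicBehavior}, including the same combinatorial case split for Property~(3).

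There is, however, a genuine gap that runs through all three properties. You repeatedly invoke phrases like ``the appropriate one-sided limit of the preimage applies,'' ``continuity of the one-sided preimage operation follows from the continuity of $\widehat{F}$,'' and ``$v'([\alpha,i-1])=v'([\beta,j-1])$ by construction.'' Each of these hides the same non-trivial fact, which is Lemma~\ref{LemmaControlExpansionInverseBranches} in the paper: if $\tau_n\to\tau$ (so $\widehat{F}(\tau_n)\to\widehat{F}(\tau)$ in the Hausdorff metric on graphs) and $I_n\subset I^b_\chi(f_{\tau_n})$ are intervals with $|I_n|\to 0$, then $|f_{\tau_n}^{-1}(I_n)|\to 0$. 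This is \emph{not} automatic. Hausdorff convergence of graphs gives you no direct control on the pointwise behavior of the inverse branches, because those branches can be arbitrarily expanding; one has to rule out the scenario in which the preimages $f_{\tau_n}^{-1}(\inf I_n)$ and $f_{\tau_n}^{-1}(\sup I_n)$ stay a definite distance apart while $|I_n|\to 0$. The paper does this by contradiction: such a scenario would force the graph of $f_{\tau_n}$ to contain an almost-horizontal segment of definite width, which is incompatible with Hausdorff convergence to a degeneration whose regular part is a genuine increasing homeomorphism on that branch. Without stating and proving this control on inverse branches, the claim that colliding points in $\cV$ produce colliding points in $\cV'$ (the heart of Property~(3)) is circular, and the continuity in Property~(2) is unsubstantiated. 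Once you isolate and prove this lemma, the remainder of your case analysis goes through and matches the paper.
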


\begin{proof}
The proof of the Proposition is the content of \S~\ref{SectionExtensionToTheBoundary}, \S~\ref{SectionContinuityExtendedThurstonMap} and \S~\ref{SectionCyclicBehavior} below. 
\end{proof}

\subsection{Proof of Theorem~\ref{TheoremRealizationRauzyPath}}
\label{SectionProofTheoremRealizationRauzyPath}

According to Point (2) of Proposition~\ref{PropositionCyclicBehaviour} and to Brower's fixed point Theorem, there exists a configuration $\cV\in\overline{\cO(\gamma)}$ such that 
$
\widehat{\cT}_\gamma(\cV)=\cV
$. 
Recall that $\pi^{(\ast)}$ is a cyclic combinatorial datum, that is $\sigma(\pi^{(\ast)})$ is a cyclic permutation of $\{1,\dots,d\}$ with maximal order $d$. Hence the map 
$
[(\alpha,i)]\mapsto[(\alpha,i-1)]
$ 
is a cyclic permutation of all classes in $\cI(\gamma)$, with maximal order $N$. Therefore, if 
$
\cV\in\partial\cO(\gamma)
$, 
that is such fixed point $\cV$ is a degenerate $\gamma$-configuration, then Point (3) of Proposition~\ref{PropositionCyclicBehaviour} implies 
$$
\cV\in
\bigcap_{[\alpha,i]\in\cI(\gamma)}\partial^{[\alpha,i]}\cO(\gamma),
$$
which is absurd, because the intersection of all faces of $\Delta^N$ is empty. We must have 
$\cV\in\cO(\gamma)$ and 
$
\cT_\gamma(\cV)=\widehat{\cT}_\gamma(\cV)=\cV
$, 
hence Theorem~\ref{TheoremRealizationRauzyPath} follows from Lemma~\ref{LemmaFixedPointsThurstonAndRauzyPath}.

\subsection{The extension to the boundary}
\label{SectionExtensionToTheBoundary}

Lemma~\ref{LemmaDefinitionExtensionThurstonMap} in this subsection proves Point (1) in Proposition~\ref{PropositionCyclicBehaviour}. Equation~\eqref{EquationTauFunctionOfConfiguration} defines a continuous extension 
$$
\tau(\cdot):\overline{\cO(\gamma)}\to\overline{\Delta^\cA}
\quad
\textrm{ ; }
\quad
\cV\mapsto\tau=\tau(\cV).
$$
Observe that for a degenerate configurations $\cV\in\partial\cO(\gamma)$ it is possible to have a non degenerate vector  
$
\tau(\cV)\in\Delta^\cA
$ 
and not necessarily 
$
\tau(\cV)\in\partial\Delta^\cA
$. 
Let $(f_\tau)_{\tau\in\Delta^\cA}$ be a full family of GIETs parametrized by 
$
\tau\mapsto F(\tau):=f_\tau
$ 
and consider the extension 
$$
\widehat{F}:
\overline{\Delta^\cA}\to\widehat{\cG}\big(\pi,[0,1)\big)
$$
as in Point (3) of Definition~\ref{DefinitionFullFamily}. Recall that if $\tau\in\Delta^\cA$, then 
$
\widehat{F}(\tau)=f_\tau\in\cG\big(\pi,[0,1)\big)
$. 
Otherwise if 
$
\tau\in\partial\Delta^\cA
$, 
then we have a degeneration 
$
D_\tau=\widehat{F}(\tau)
$, 
which we decompose as $D_\tau=G_{f_\tau}\cup S_\tau$, where $S_\tau$ is the singular part and 
$
f_\tau:[0,1)\to[0,1)
$ 
is a GIET whose combinatorial datum is a reduction of $\pi$. In both cases we say that $f_\tau$ is the regular part of $\widehat{F}(\tau)$. Composition Equation~\eqref{EquationTauFunctionOfConfiguration} with $\widehat{F}$ we get a continuous map
\begin{equation}
\label{EquationGietFunctionOfConfiguration(Extension)}
\overline{\cO(\gamma)}\to\widehat{\cG}(\pi,[0,1))
\quad
\textrm{ ; }
\quad
\cV\mapsto\widehat{F}\big(\tau(\cV)\big),
\end{equation}
where the distance between configurations $\widetilde{\cV}$ and $\cV$ is the restriction to $\overline{\Delta^N}$ of the distance on $\RR^N$ induced by the sup-norm $\|\cdot\|_\infty$ and where the distance on $\widehat{\cG}\big(\pi,[0,1)\big)$ is defined by Equation~\eqref{EquationDinstanceGIETsDegenerate}. In the following we will apply frequently Lemma~\ref{LemmaControlExpansionInverseBranches} below.

\begin{lemma}
\label{LemmaControlExpansionInverseBranches}
Let $\tau\in\overline{\Delta^\cA}$ and 
$
(\tau_n)_{n\in\NN}\subset\Delta^\cA
$ 
be a sequence with $\tau_n\to\tau$ as $n\to\infty$. Let $I_n$ be a sequence of open intervals with $|I_n|\to0$ as $n\to\infty$ such that there exists $\alpha\in\cA$ with 
$$
I_n\subset I^b_\alpha(f_{\tau_n})
\quad
\textrm{ for any }
\quad
n\in\NN.
$$
Then $f_{\tau_n}^{-1}(I_n)$ is an interval too and we have $|f_{\tau_n}^{-1}(I_n)|\to0$. 
\end{lemma}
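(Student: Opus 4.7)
The plan is to argue by contradiction. Setting $J_n := f_{\tau_n}^{-1}(I_n)$, I first note that $J_n$ is automatically an interval: by hypothesis $I_n$ is contained in $I^b_\alpha(f_{\tau_n})$, and $(f_{\tau_n})_\alpha : I^t_\alpha(f_{\tau_n}) \to I^b_\alpha(f_{\tau_n})$ is an orientation-preserving homeomorphism, so $J_n \subset I^t_\alpha(f_{\tau_n})$ is a sub-interval. Supposing $|J_n| \not\to 0$, I would pass to a subsequence on which $|J_n| \geq \delta > 0$ for some fixed $\delta$, and derive a contradiction from the continuity of the extended family map $\widehat{F}:\overline{\Delta^\cA}\to\widehat{\cG}(\pi,[0,1))$ provided by Point~(3) of Definition~\ref{DefinitionFullFamily}.

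The idea is that by this continuity and by the definition of the distance in Equation~\eqref{EquationDinstanceGIETsDegenerate}, the closed sets $\overline{G_{(f_{\tau_n})_\alpha}}$ converge in Hausdorff distance to a limit set $D_\alpha\subset[0,1]^2$, which is either the closed graph of $(f_\tau)_\alpha$ (when $\alpha\in\cA(D_\tau)$, where $D_\tau=\widehat{F}(\tau)$) or a single point $\{s_\alpha\}$ (when $\alpha\in\cZ(\tau)$). I would then split into two cases. In the degenerate case $\alpha\in\cZ(\tau)$, the horizontal extent of $\overline{G_{(f_{\tau_n})_\alpha}}$, namely $|I^t_\alpha(f_{\tau_n})|$, tends to zero, which already contradicts $|J_n|\geq\delta$ because $J_n\subset I^t_\alpha(f_{\tau_n})$. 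In the regular case $\alpha\in\cA(D_\tau)$, the limit branch $(f_\tau)_\alpha$ is a continuous strictly increasing homeomorphism on a non-degenerate closed interval $\closure(I^t_\alpha(f_\tau))$; after extracting a further subsequence, the closed intervals $\closure(J_n)$ converge in Hausdorff distance to a closed sub-interval $J_\infty\subset\closure(I^t_\alpha(f_\tau))$ with $|J_\infty|\geq\delta$. Hausdorff-convergence of the graphs then forces the $y$-projections $\closure(I_n)$ of the corresponding graph pieces to converge to the non-degenerate interval $\closure((f_\tau)_\alpha(J_\infty))$, whose positive length contradicts $|I_n|\to 0$.

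The main point to be careful with is the extraction of the sub-interval $J_\infty$ and the identification of the limit of $\closure(I_n)$ with $\closure((f_\tau)_\alpha(J_\infty))$ in the regular case; this uses that $\overline{G_{(f_{\tau_n})_\alpha}}$ is the graph of a strictly increasing bijection, so its horizontal and vertical projections determine each other continuously, combined with the standard compactness of the space of closed subsets of $[0,1]$ under the Hausdorff metric and the uniform continuity of $(f_\tau)_\alpha$ on $\closure(I^t_\alpha(f_\tau))$. Once this is in place, both cases yield a contradiction, completing the proof.
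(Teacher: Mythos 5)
Your proof is correct and follows essentially the same route as the paper. Both arguments proceed by contradiction, split according to whether the branch labelled by $\alpha$ survives in the limiting degeneration $\widehat{F}(\tau)$ (equivalently whether $|I^t_\alpha(f_{\tau_n})|\to0$), and in the surviving case use the Hausdorff convergence $\overline{G_{(f_{\tau_n})_\alpha}}\to\overline{G_{(f_\tau)_\alpha}}$ to show that a non-shrinking sequence of preimage intervals would force the images $I_n$ to have a non-degenerate limit. The only cosmetic difference is that you package the preimages as a Hausdorff limit interval $J_\infty$ and conclude via $|I_n|\to|\closure((f_\tau)_\alpha(J_\infty))|>0$, whereas the paper tracks the endpoints $a_n=f_{\tau_n}^{-1}(c_n)$, $b_n=f_{\tau_n}^{-1}(d_n)$ explicitly and reads the contradiction off as a lower bound on $\distance(f_{\tau_n},\widehat{F}(\tau))$; these are two phrasings of the same estimate.
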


\begin{proof}
Recall that  
$
\distance\big(f_{\tau_n},\widehat{F}(\tau)\big)\to0
$ 
as $n\to\infty$ in the distance defined by Equation~\eqref{EquationDinstanceGIETsDegenerate}. Observe that 
$
f_{\tau_n}^{-1}(I_n)\subset I^t_\alpha(f_{\tau_n})
$ 
for any $n$. Thus in particular the Lemma is obvious if 
$
|I^t_\alpha(f_{\tau_n})|\to0
$ 
as $n\to\infty$, that is $\tau\in\partial\Delta^\cA$ with $\tau_\alpha=0$. Otherwise there exists non-empty right-open intervals $I^t_\alpha$ and $I^b_\alpha$ with
$$
\closure\big(I^t_\alpha(f_{\tau_n})\big)\to \closure(I^t_\alpha)
\quad
\textrm{ and }
\quad
\closure\big(I^b_\alpha(f_{\tau_n})\big)\to \closure(I^b_\alpha)
$$
in the Hausdorff distance as $n\to\infty$, and the restriction 
$
f_\tau:I^t_\alpha \to I^b_\alpha
$ 
is a homeomorphism, where $f_\tau$ is the regular part of $\widehat{F}(\tau)$. For any $n\in\NN$ set $I_n=(c_n,d_n)$ and let 
$
a_n:=f_{\tau_n}^{-1}(c_n)
$ 
and 
$
b_n:=f_{\tau_n}^{-1}(d_n)
$. 
Modulo subsequences, let $a,b\in\closure(I^t_\alpha)$ with $a_n\to a$ and $b_n\to b$ as $n\to\infty$, where obviously $a\leq b$. If $a<b$ strictly then 
$$
\epsilon:=\lim_{x\to b-}f_\tau(x)-\lim_{x\to a+}f_\tau(x)>0
$$ 
also strictly. In this case, let $n_0$ be such that for $n\geq n_0$ we have 
$
0<d_n-c_n\leq \epsilon/50
$. 
Assume also that $|a_n-a|<\epsilon/100$ and $|b_n-b|<\epsilon/100$ for any such $n$. Then we have 
$$
\distance\big(f_{\tau_n},\widehat{F}(\tau)\big)
\geq
\max
\left\{
|c_n-\lim_{x\to a+}f_\tau(x)|,|d_n-\lim_{x\to b-}f_\tau(x)|
\right\}
\geq\frac{\epsilon}{100},
$$
for any $n$ big enough, which is absurd. The Lemma is proved.
\end{proof}

Fix 
$
\cV\in\overline{\cO(\gamma)}
$, 
that is a $\gamma$-configuration which can be degenerate or not. Consider a sequences of configurations $\cV_n\in\cO(\gamma)$, where $n\in\NN$ and 
$
\cV_n=\{v_n([\alpha,i]),[\alpha,i]\in\cI(\gamma)\}
$, 
such that $\cV_n\to\cV$, that is such that 
$
\lim_{n\to\infty}v_n([\alpha,i])= v([\alpha,i])
$ 
for any $[\alpha,i]\in\cI(\gamma)$. Define the set of points 
$
\cV'=\{v'([\alpha,i]);[\alpha,i]\in\cI(\gamma)\}\subset[0,1)
$ 
by 
\begin{equation}
\label{EquationDefinitionExtensionThurstonMap}
v'([\alpha,i]):=
\lim_{n\to\infty}f_{\tau(\cV_n)}^{-1}\big(v_n([\alpha,i+1])\big),
\end{equation}
where we recall Remark~\ref{RemarkIdentificationlabels}. According to Lemma~\ref{LemmaDefinitionExtensionThurstonMap} below, Equation~\eqref{EquationDefinitionExtensionThurstonMap} defines a map
$$
\widehat{\cT}_\gamma:\underline{\cO(\gamma)}\to\underline{\cO(\gamma)}
\quad
\textrm{ ; }
\quad
\cV\mapsto\widehat{\cT}_\gamma(\cV):=\cV'
$$
which extends the map 
$
\cT_\gamma:\cO(\gamma)\to\cO(\gamma)
$, 
called the \emph{extended Thurston map}.

\begin{lemma}
\label{LemmaDefinitionExtensionThurstonMap}
Fix 
$
\cV\in\overline{\cO(\gamma)}
$. 
\begin{enumerate}
\item
The limit in Equation~\eqref{EquationDefinitionExtensionThurstonMap} exists.
\item
The set $\cV'$ defined by Equation~\eqref{EquationDefinitionExtensionThurstonMap} does not depend on the choice of the sequence $\cV_n\in\cO(\gamma)$ with $\cV_n\to\cV$. 
\item
We have  
$
\cV'\in\overline{\cO(\gamma)}
$.
\item
If $\cV\in\cO(\gamma)$, that is $\cV$ is non degenerate, then $\cV'=\cT_\gamma(\cV)$.
\end{enumerate}
\end{lemma}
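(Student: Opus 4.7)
I would prove Points (1), (2), (4) together by analyzing the sequence $f_{\tau(\cV_n)}^{-1}(v_n([\alpha,i+1]))$ for an arbitrary sequence $\cV_n\to\cV$ in $\overline{\cO(\gamma)}$, and then deduce Point (3) by passing to the limit in the strict order relations that characterize $\cO(\gamma)$. The key input throughout is the continuity of the composite map $\cV\mapsto\widehat{F}(\tau(\cV))$ of Equation~\eqref{EquationGietFunctionOfConfiguration(Extension)}, which yields $f_{\tau(\cV_n)}\to\widehat{F}(\tau(\cV))$ in the distance of Equation~\eqref{EquationDinstanceGIETsDegenerate}.

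Write $\widehat{F}(\tau(\cV))=G_g\cup S$, where $g$ is the regular part, defined on the reduced alphabet $\cA(g)\subseteq\cA$, and $S=\{s_\chi\}_{\chi\in\cA\setminus\cA(g)}$ is the singular part. Because every $\cV_n\in\cO(\gamma)$ respects the geometric order of the standard configuration $\cV^{(\gamma)}$, the point $v_n([\alpha,i+1])$ lies in the right-open interval $I^b_\beta(f_{\tau(\cV_n)})$ for a letter $\beta=\beta(\alpha,i)\in\cA$ that is determined solely by the position of $\underline{v([\alpha,i+1])}$ in $\cV^{(\gamma)}$ and is therefore independent of $n$. I would then split into two cases according to whether $\beta\in\cA(g)$ or not.

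If $\beta\in\cA(g)$, equivalently $\tau(\cV)_\beta>0$, then the closures $\overline{I^b_\beta(f_{\tau(\cV_n)})}$ converge in Hausdorff distance to the non-degenerate closed interval $\overline{I^b_\beta(g)}$, and Hausdorff convergence of their graphs forces the restricted homeomorphisms $f_{\tau(\cV_n)}^{-1}|_{\overline{I^b_\beta(f_{\tau(\cV_n)})}}$ to converge uniformly to $g^{-1}|_{\overline{I^b_\beta(g)}}$. Hence $f_{\tau(\cV_n)}^{-1}(v_n([\alpha,i+1]))\to g^{-1}(v([\alpha,i+1]))$, a limit that depends only on $\cV$. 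If instead $\beta\notin\cA(g)$, so $\tau(\cV)_\beta=0$, then $|I^b_\beta(f_{\tau(\cV_n)})|\to 0$; applying Lemma~\ref{LemmaControlExpansionInverseBranches} with the subintervals $I^b_\beta(f_{\tau(\cV_n)})$ gives $|I^t_\beta(f_{\tau(\cV_n)})|\to 0$ as well, so the preimage, confined to $I^t_\beta(f_{\tau(\cV_n)})$, is squeezed to the singular point $s_\beta\in S$. Because the $y$-coordinate of $s_\beta$ matches $\lim_n v_n([\alpha,i+1])=v([\alpha,i+1])$, the identification is consistent and again independent of the chosen approximating sequence. This simultaneously gives Points (1) and (2).

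For Point (3), I would pass to the limit in the strict order inequalities $v'_n([\alpha,i])<v'_n([\beta,j])$ and in the normalization $v'_n([\alpha_0,0])=0$ (for $\pi_t(\alpha_0)=1$) that each $\cT_\gamma(\cV_n)\in\cO(\gamma)$ satisfies by Proposition~\ref{PropositionDefinitionThurstonMap}; strict inequalities may degenerate into equalities in the limit, which is exactly what defines the boundary $\partial\cO(\gamma)$, placing $\cV'\in\overline{\cO(\gamma)}$. Point (4) is then immediate from sequence-independence by taking the constant sequence $\cV_n=\cV$ in Equation~\eqref{EquationDefinitionExtensionThurstonMap} (which is legitimate because $\cV\in\cO(\gamma)$ forces $\tau(\cV)\in\Delta^\cA$): one directly recovers $v'([\alpha,i])=f_{\tau(\cV)}^{-1}(v([\alpha,i+1]))$, which is $\cT_\gamma(\cV)([\alpha,i])$ by definition.

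The main obstacle is the collapsing case $\beta\notin\cA(g)$: a priori both $I^b_\beta(f_{\tau(\cV_n)})$ and $I^t_\beta(f_{\tau(\cV_n)})$ could shrink at very different rates, and $v_n([\alpha,i+1])$ and its preimage could oscillate wildly inside the collapsing branch. Lemma~\ref{LemmaControlExpansionInverseBranches} is precisely the tool forbidding such pathology, and its systematic use is the crux of the argument. A secondary point to check is that the limit point so obtained matches the geometric location of $s_\beta$ prescribed by the construction of $\widehat{F}(\tau(\cV))$ in the proof of Proposition~\ref{PropositionFullFamilies}, which ensures that $\widehat{\cT}_\gamma$ does indeed take values in $\overline{\cO(\gamma)}$.
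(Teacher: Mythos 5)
Your proposal is correct and follows essentially the same route as the paper: both rest on the continuity of $\cV\mapsto\widehat{F}(\tau(\cV))$, on Lemma~\ref{LemmaControlExpansionInverseBranches} (or the equivalent Hausdorff-compactness argument you invoke) to control the inverse branch containing $v_n([\alpha,i+1])$, and on $\cV'=\lim_n\cT_\gamma(\cV_n)$ with $\cT_\gamma(\cV_n)\in\cO(\gamma)$ for Point (3). The only organizational difference is that you split according to whether the branch $\beta$ survives in $\widehat{F}(\tau(\cV))$, whereas the paper treats $i=0$ first and then, for $i\neq0$, splits according to whether the subintervals $I_n=(\min I^b_\chi(f_{\tau(\cV_n)}),v_n([\alpha,i+1]))$ or $I'_n=(v_n([\alpha,i+1]),\sup I^b_\chi(f_{\tau(\cV_n)}))$ shrink to zero, a finer decomposition that lets one cite Lemma~\ref{LemmaControlExpansionInverseBranches} verbatim rather than asserting a slightly stronger Hausdorff-to-uniform convergence statement as you do.
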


\begin{proof} 
Consider $\cV_n\in\cO(\gamma)$ with $\cV_n\to\cV$ as $n\to\infty$ and recall that, by continuity of the map in Equation~\eqref{EquationGietFunctionOfConfiguration(Extension)}, we have 
$
f_{\tau(\cV_n)}\to \widehat{F}\big(\tau(\cV)\big)
$ 
in terms of the distance defined by Equation~\eqref{EquationDinstanceGIETsDegenerate}. For $i=0$, Equation~\eqref{EquationDefinitionExtensionThurstonMap} becomes
$$
\lim_{n\to+\infty}
f_{\tau(\cV_n)}^{-1}\big(v_n([\alpha,1])\big)=
\lim_{n\to+\infty}
u^t_\alpha(f_{\tau(\cV_n)}),
$$
and the limit above exists and is independent from the choice of $\cV_n$ with $\cV_n\to\cV$, so that Point (1) and (2) are proved for elements $v'([\alpha,0])$ of $\cV'$. For $i\not=0$, since all $\cV_n$ are $\gamma$-configuration, then there exists a letter $\chi\in\cA$ such that 
$$
u^b_\chi(f_{\tau(\cV_n)})
=
\min I^b_\chi(f_{\tau(\cV_n)})
<
v_n([\alpha,i+1])
<
\sup I^b_\chi(f_{\tau(\cV_n)})
$$
for any $n$. The argument for $i=0$ implies that there exist points $a=a(\chi)$ and $b=b(\chi)$ in the closed interval $[0,1]$, which do not depend on the choice of $\cV_n\to\cV$, such that 
$$
u^b_\chi(f_{\tau(\cV_n)})\to a(\chi)
\quad
\textrm{ and }
\quad
\sup I^b_\chi(f_{\tau(\cV_n)})\to b(\chi)
$$ 
as $n\to\infty$. We have obviously $a\leq b$, where $a=b$ holds when the intervals 
$
I^{t/b}_\chi(f_{\tau(\cV_n)})
$ 
shrink to a point as $n\to\infty$. For any $n$, consider the two open sub-intervals of $I^b_\chi(f_{\tau(\cV_n)})$ given by
$$
I_n:=
\big(
\min I^b_\chi(f_{\tau(\cV_n)}),v_n([\alpha,i+1])
\big)
\quad
\textrm{ and }
\quad
I'_n:=
\big(
v_n([\alpha,i+1]),\sup I^b_\chi(f_{\tau(\cV_n)})
\big).
$$ 
If either $|I_n|\to0$ or $|I'_n|\to0$ as $n\to\infty$, then according to Lemma~\ref{LemmaControlExpansionInverseBranches} we have either  
$
|f_{\tau(\cV_n)}^{-1}(I_n)|\to0
$ 
or 
$
|f_{\tau(\cV_n)}^{-1}(I'_n)|\to0
$ 
respectively, and thus 
$
f_{\tau(\cV_n)}^{-1}\big(v_n([\alpha,i+1])\big)
$ 
converges either to $a(\chi)$ or to $b(\chi)$ respectively. In this case too Point (1) and (2) holds for $v'([\alpha,i])$. If both $\inf_{n\in\NN}|I_n|>0$ and  $\inf_{n\in\NN}|I'_n|>0$ then the letter $\chi$ corresponds to a non-degenerate branch of the regular part $f_{\tau(\cV)}$ of 
$
\widehat{F}\big(\tau(\cV)\big)
$, 
and there exist right open intervals 
$
I^{t/b}_\chi\subset[0,1)
$ 
such that the restriction 
$
f_{\tau(\cV)}^{-1}:I^b_\chi\to I^t_\chi
$ 
is an homeomorphism. Moreover we have 
$
v([\alpha,i+1])\in \interior(I^b_\chi)
$, 
and thus also 
$
v_n([\alpha,i+1])\in \interior(I^b_\chi)
$ 
for $n$ big enough. We have 
\begin{align*}
&
\left|
f_{\tau(\cV_n)}^{-1}\big(v_n([\alpha,i+1])\big)
-
f_{\tau(\cV)}^{-1}\big(v([\alpha,i+1])\big)
\right|
\leq
\\
&
\left|
f_{\tau(\cV_n)}^{-1}\big(v_n([\alpha,i+1])\big)
-
f_{\tau(\cV_n)}^{-1}\big(v([\alpha,i+1])\big)
\right|
+
\left|
f_{\tau(\cV_n)}^{-1}\big(v([\alpha,i+1])\big)
-
f_{\tau(\cV)}^{-1}\big(v([\alpha,i+1])\big)
\right|,
\end{align*}
therefore 
$
f_{\tau(\cV_n)}^{-1}\big(v_n([\alpha,i+1])\big)
\to
f_{\tau(\cV)}^{-1}\big(v([\alpha,i+1])\big), 
$ 
indeed the first term in the second line converges to $0$ by Lemma~\ref{LemmaControlExpansionInverseBranches}, while the second converges to $0$ because 
$
f_{\tau(\cV_n)}\to \widehat{F}\big(\tau(\cV)\big)
$. 
This concludes the proof of Point (1) and (2) in the Lemma. Point (3) follows observing that
$
\cV'=\lim_{n\to\infty}\cV'_n
$, 
where $\cV'_n=\cT_\gamma(\cV_n)$, and the latter is a sequence of $\gamma$-configurations according to Proposition~\ref{PropositionDefinitionThurstonMap}. Finally, Point (4) just corresponds to the continuity of the map 
$
\cT_\gamma:\cO(\gamma)\to\cO(\gamma)
$, 
proved in Lemma~\ref{LemmaContinuityThurstonMap}. The proof of the Lemma is complete.
\end{proof}

\subsection{Continuity of the extended map}
\label{SectionContinuityExtendedThurstonMap}

Lemma~\ref{LemmaContinuityExtendedThurstonMap} in this subsection proves Point (2) in Proposition~\ref{PropositionCyclicBehaviour}. 

\begin{lemma}
\label{LemmaContinuityExtendedThurstonMap}
The extended Thurston map 
$
\widehat{\cT}_\gamma:\overline{\cO(\gamma)}\to\overline{\cO(\gamma)}
$ 
defined by Equation~\eqref{EquationDefinitionExtensionThurstonMap} is continuous.
\end{lemma}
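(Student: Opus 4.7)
The plan is to deduce sequential continuity on $\overline{\cO(\gamma)}$ via a diagonal argument, using the fact that the independence of the limit in Equation~\eqref{EquationDefinitionExtensionThurstonMap} from the approximating sequence has already been established in Point (2) of Lemma~\ref{LemmaDefinitionExtensionThurstonMap}. The key is that $\cO(\gamma)$, being the interior of the closed simplex $\overline{\cO(\gamma)}$, is dense in it, so that $\widehat{\cT}_\gamma$ on the boundary is characterised by arbitrary approximation from the interior.

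More precisely, I would take a converging sequence $\cV^{(k)}\to\cV$ in $\overline{\cO(\gamma)}$ (with the sup-norm metric inherited from $\RR^N$) and aim to show $\widehat{\cT}_\gamma(\cV^{(k)})\to\widehat{\cT}_\gamma(\cV)$. For each $k$, by the definition of the extended map together with Lemma~\ref{LemmaDefinitionExtensionThurstonMap}(2), one can pick a sequence $(\cV^{(k,n)})_{n\in\NN}\subset\cO(\gamma)$ of non-degenerate configurations with $\cV^{(k,n)}\to\cV^{(k)}$ and $\cT_\gamma(\cV^{(k,n)})\to\widehat{\cT}_\gamma(\cV^{(k)})$ as $n\to\infty$. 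Choose $n_k$ so that simultaneously
$$
\|\cV^{(k,n_k)}-\cV^{(k)}\|_\infty<\tfrac{1}{k}
\quad\text{and}\quad
\|\cT_\gamma(\cV^{(k,n_k)})-\widehat{\cT}_\gamma(\cV^{(k)})\|_\infty<\tfrac{1}{k},
$$
and set $\cW_k:=\cV^{(k,n_k)}\in\cO(\gamma)$. By the triangle inequality $\cW_k\to\cV$ in $\overline{\cO(\gamma)}$, and by Lemma~\ref{LemmaDefinitionExtensionThurstonMap}(4) we have $\widehat{\cT}_\gamma(\cW_k)=\cT_\gamma(\cW_k)$. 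Applying once more the sequence-independence granted by Lemma~\ref{LemmaDefinitionExtensionThurstonMap}(2) to the new approximating sequence $(\cW_k)_{k\in\NN}\subset\cO(\gamma)$ of $\cV$, I conclude that $\cT_\gamma(\cW_k)\to\widehat{\cT}_\gamma(\cV)$ as $k\to\infty$. A final triangle inequality
$$
\|\widehat{\cT}_\gamma(\cV^{(k)})-\widehat{\cT}_\gamma(\cV)\|_\infty
\leq
\|\widehat{\cT}_\gamma(\cV^{(k)})-\cT_\gamma(\cW_k)\|_\infty
+
\|\cT_\gamma(\cW_k)-\widehat{\cT}_\gamma(\cV)\|_\infty
$$
yields the desired convergence, since the first summand is bounded by $1/k$ and the second tends to zero.

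The bulk of the technical work has already been done: the delicate case analysis dealing with branches of $f_\tau$ that may shrink to a point in the limit was carried out in the proof of Lemma~\ref{LemmaDefinitionExtensionThurstonMap}, resting crucially on Lemma~\ref{LemmaControlExpansionInverseBranches} and on the continuity of $\tau\mapsto \widehat{F}(\tau)$ as a map from $\overline{\Delta^\cA}$ into $\widehat{\cG}\big(\pi,[0,1)\big)$ with the distance of Equation~\eqref{EquationDinstanceGIETsDegenerate}. The only point to verify, which amounts to bookkeeping, is that the composition $\cV\mapsto\tau(\cV)\mapsto\widehat{F}(\tau(\cV))$ used inside the diagonal extraction is continuous on $\overline{\cO(\gamma)}$, which is immediate from Equation~\eqref{EquationTauFunctionOfConfiguration} and the continuity statement for $\widehat{F}$ in Point (3) of Definition~\ref{DefinitionFullFamily}. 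The step I expect to be most subject to slips is ensuring, in the diagonal choice of $n_k$, that both estimates above can be realised simultaneously; but this is purely a matter of metric-space generalities and carries no additional dynamical content.
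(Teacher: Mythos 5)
Your proof is correct, and it takes a genuinely different route from the paper. The paper proves continuity of $\widehat{\cT}_\gamma$ directly by an $\epsilon$-$\delta$ argument: it takes two configurations $\cV,\widetilde{\cV}\in\overline{\cO(\gamma)}$, approximates each by non-degenerate sequences, and re-runs essentially the same case analysis used to prove Lemma~\ref{LemmaDefinitionExtensionThurstonMap} (once more invoking Lemma~\ref{LemmaControlExpansionInverseBranches} to control the shrinking branches and the continuity of $\cV\mapsto\widehat{F}(\tau(\cV))$ to compare nearby inverse branches). Your argument instead abstracts away all of that: you observe that the content of Lemma~\ref{LemmaDefinitionExtensionThurstonMap}, parts (1), (2) and (4), is exactly the hypothesis of the general metric-space fact that a map $T$ on a dense subset $D\subset X$ whose values along every $D$-sequence converging to $x$ have a common limit $\hat T(x)$ automatically has a continuous extension $\hat T$, and you implement the standard diagonal extraction to prove this fact in the case at hand. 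The diagonal step you flag as the one most prone to slips is sound: for each $k$ the definition of $\widehat{\cT}_\gamma(\cV^{(k)})$ as a sequence-independent limit lets you choose $\cW_k\in\cO(\gamma)$ simultaneously within $1/k$ of $\cV^{(k)}$ and with $\cT_\gamma(\cW_k)$ within $1/k$ of $\widehat{\cT}_\gamma(\cV^{(k)})$, and sequential continuity suffices since $\overline{\cO(\gamma)}$ is metrizable. What each route buys: the paper's direct argument is self-contained and keeps the reader's attention on the actual geometry of the degenerating branches, at the cost of repeating most of the work already done in Lemma~\ref{LemmaDefinitionExtensionThurstonMap}; your approach makes it transparent that, once existence and sequence-independence of the extension are established, continuity is a purely formal consequence and requires no new dynamical input. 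Both are valid, but yours is shorter and isolates the logical structure more cleanly.
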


\begin{proof}
Consider configurations $\cV,\widetilde{\cV}$ in $\overline{\cO(\gamma)}$, which can be degenerate or not. Let $\cV_n$ and $\widetilde{\cV}_n$ be non-degenerate configurations in $\cO(\gamma)$ with 
$\cV_n\to\cV$ and 
$
\widetilde{\cV}_n\to\widetilde{\cV}
$ 
as $n\to\infty$, so that 
$$
\cV':=\widehat{\cT}_\gamma(\cV)=\lim_{n\to\infty}\cV'_n
\quad
\textrm{ and }
\quad
\widetilde{\cV}':=\widehat{\cT}_\gamma(\widetilde{\cV})=\lim_{n\to\infty}\widetilde{\cV}'_n,
$$
where $\cV'_n:=\cT_\gamma(\cV_n)$ and 
$
\widetilde{\cV}'_n:=\cT_\gamma(\widetilde{\cV}_n)
$ 
for any $n\in\NN$, according to the definition of $\widehat{\cT}_\gamma$ in Equation~\eqref{EquationDefinitionExtensionThurstonMap}. For any 
$[\alpha,i]\in\cI(\gamma)$ denote $v'([\alpha,i])$ and $\widetilde{v}'([\alpha,i])$ the corresponding labeled points respectively in $\cV'$ and $\widetilde{\cV}'$. Similarly, denote $v_n([\alpha,i])$, $v'_n([\alpha,i])$, 
$
\widetilde{v}_n([\alpha,i])
$ 
and 
$
\widetilde{v}'_n([\alpha,i])
$ 
the corresponding points respectively in $\cV_n$, $\cV'_n$, 
$
\widetilde{\cV}_n
$ 
and $\widetilde{\cV}_n'$. Fix $\epsilon>0$ and $[\alpha,i]\in\cI(\gamma)$. We prove that if 
$\widetilde{\cV}$ is close enough to $\cV$, then we have 
$
|v'([\alpha,i])-\widetilde{v}'([\alpha,i])|\leq\epsilon
$, 
and in order to do so it is enough to prove that if $n$ is big enough then we have  
\begin{equation}
\label{Equation(1)LemmaContinuityExtendedThurstonMap}
\left|
v'([\alpha,i])-\widetilde{v}'_n([\alpha,i])
\right|
<\epsilon.
\end{equation}

Observe that if 
$
|\widetilde{\cV}-\cV|<\delta
$ 
for some $\delta>0$ then also 
$
|\widetilde{\cV}_n-\cV_n|<\delta
$ 
for any $n$ big enough. Moreover the map 
$
\cV\mapsto \widehat{F}\big(\tau(\cV)\big)
$ 
in Equation~\eqref{EquationGietFunctionOfConfiguration(Extension)} is continuous. It follows that if $\delta$ is small enough and $n$ is big enough we have 
\begin{equation}
\label{Equation(2)LemmaContinuityExtendedThurstonMap}
\distance\left(
f_{\tau(\widetilde{\cV}_n)},
\widehat{F}\big(\tau(\cV)\big)
\right)<\epsilon.
\end{equation}

For the labels $[\alpha,0]$ with $\alpha\in\cA$, Equation~\eqref{Equation(1)LemmaContinuityExtendedThurstonMap} follows easily from Equation~\eqref{Equation(2)LemmaContinuityExtendedThurstonMap} observing that for any $n$ we have
\begin{align*}
&
v'_n([\alpha,0])=
f_{\tau(\cV_n)}^{-1}\big(v_n([\alpha,1])\big)=
u^t_\alpha(f_{\tau(\cV_n)})\to v'([\alpha,0])
\\
&
\widetilde{v}'_n([\alpha,0])=
f_{\tau(\widetilde{\cV}_n)}^{-1}\big(\widetilde{v}_n([\alpha,1])\big)=
u^t_\alpha(f_{\tau(\widetilde{\cV}_n)}).
\end{align*}

Now consider $[\alpha,i]\not=[\alpha,0]$ and let $\chi\in\cA$ be the letter such that for any $n$ we have 
\begin{align*}
&
u^b_\chi(f_{\tau(\cV_n)})=
\min I^b_\chi(f_{\tau(\cV_n)})
<
v_n([\alpha,i+1])
<
\sup I^b_\chi(f_{\tau(\cV_n)})
\\
&
u^b_\chi(f_{\tau(\widetilde{\cV}_n)})=
\min I^b_\chi(f_{\tau(\widetilde{\cV}_n)})
<
\widetilde{v}_n([\alpha,i+1])
<
\sup I^b_\chi(f_{\tau(\widetilde{\cV}_n)}),
\end{align*}
so that 
$
v'_n([\alpha,i])\in I^t_\chi(f_{\tau(\cV_n)})
$ 
and 
$
\widetilde{v}'_n([\alpha,i])\in I^t_\chi(f_{\tau(\widetilde{\cV}_n)})
$ 
by Equation~\eqref{EquationDefinitionThurstonMap}. Set 
$$
a=a(\chi):=\lim_{n\to\infty}u^t_\chi(f_{\tau(\cV_n)})
\quad
\textrm{ and }
\quad
b=b(\chi):=\lim_{n\to\infty}\left(\sup I^t_\chi(f_{\tau(\cV_n)})\right).
$$
According to Part (1) of Lemma~\ref{LemmaDefinitionExtensionThurstonMap}, we have 
$$
\lim_{n\to\infty}
f_{\tau(\cV_n)}^{-1}\big(v_n([\alpha,i+1])\big)=
v'([\alpha,i])\in [a,b].
$$
Moreover, if $\widetilde{\cV}$ is close to $\cV$ and $n$ is big enough, then Equation~\eqref{Equation(2)LemmaContinuityExtendedThurstonMap} implies
$$
\left|
a-\min I^t_\chi(f_{\tau(\widetilde{\cV}_n)})
\right|
<\epsilon
\quad
\textrm{ and }
\quad
\left|
\sup I^t_\chi(f_{\tau(\widetilde{\cV}_n)})-b
\right|
<\epsilon.
$$

If $a=v'([\alpha,i])=b$, then  
$
|\widetilde{v}'_n([\alpha,i])-v'([\alpha,i])|<\epsilon
$, 
so that Equation~\eqref{Equation(1)LemmaContinuityExtendedThurstonMap} is satisfied. If otherwise $a<b$ strictly, then the letter $\chi$ corresponds to a non-degenerate branch of the regular part $f_{\tau(\cV)}$ of 
$
\widehat{F}\big(\tau(\cV)\big)
$ 
and there exists a right-open interval 
$
I^b_\chi\subset[0,1]
$ 
such that the restriction 
$
f_{\tau(\cV)}^{-1}:I^b_\chi\to[a,b)
$ 
is an homeomorphism. By continuity of the map 
$
\cV\mapsto \widehat{F}\big(\tau(\cV)\big)
$ 
the letter $\chi$ also corresponds to a non-degenerate branch of the regular part $f_{\tau(\widetilde{\cV})}$ of 
$
\widehat{F}\big(\tau(\widetilde{\cV})\big)
$, 
and there exists right-open intervals 
$
\widetilde{I}^{t/b}_\chi\subset[0,1)
$ 
such that the restriction 
$
f_{\tau(\widetilde{\cV})}^{-1}:\widetilde{I}^{b}_\chi\to\widetilde{I}^{t}_\chi
$ 
is an homeomorphism onto its image. As in the proof of Part (1) of Lemma~\ref{LemmaDefinitionExtensionThurstonMap}, consider the two open sub-intervals of $I^b_\chi(f_{\tau(\cV_n)})$ given by
$$
I_n:=
\big(
\min I^b_\chi(f_{\tau(\cV_n)}),v_n([\alpha,i+1])
\big)
\quad
\textrm{ and }
\quad
I'_n:=
\big(
v_n([\alpha,i+1]),\sup I^b_\chi(f_{\tau(\cV_n)})
\big).
$$ 
If both $\sup_{n\in\NN}|I_n|>0$ and $\sup_{n\in\NN}|I'_n|>0$, then 
$
v([\alpha,i+1])=\lim_{n\to\infty}v([\alpha,i+1])
$ 
is an interior point of $I^b_\chi$. Moreover, if $\widetilde{\cV}$ is close enough to $\cV$, then also   
$
\widetilde{v}([\alpha,i+1])
$ 
is an interior point of $I^b_\chi$, and thus 
$
\widetilde{v}_n([\alpha,i+1])
$ 
too, for any $n$ big enough. Therefore Equation~\eqref{Equation(1)LemmaContinuityExtendedThurstonMap} follows with the same argument as in the second part of the proof of Lemma~\ref{LemmaContinuityThurstonMap}. Otherwise assume $|I_n|\to0$ as $n\to\infty$, where the argument for the case $|I'_n|\to0$ is the same and is left to the reader. We have 
$
|f_{\tau(\cV_n)}^{-1}(I_n)|\to0
$ 
and thus Lemma~\ref{LemmaControlExpansionInverseBranches} implies 
$$
v'_n([\alpha,i])=
f_{\tau(\cV_n)}^{-1}\big(v_n([\alpha,i+1])\big)\to a.
$$
Therefore Equation~\eqref{Equation(1)LemmaContinuityExtendedThurstonMap} holds for  
$
\widetilde{v}'_n([\alpha,i])
=
f_{\tau(\widetilde{\cV}_n)}^{-1}\big(\widetilde{v}_n([\alpha,i+1])\big)
$ 
because Equation~\eqref{Equation(2)LemmaContinuityExtendedThurstonMap} implies 
$$
\left|
\left(
f_{\tau(\widetilde{\cV}_n)}^{-1}
\big(
\widetilde{v}_n([\alpha,i+1])
\big),
\widetilde{v}_n([\alpha,i+1])
\right)
-
\left(
a,v([\alpha,i+1])
\right)
\right|_{\RR^2}\leq \epsilon,
$$
where $|p_1-p_2|_{\RR^2}$ denotes the distance between points $p_1,p_2$ in $\RR^2$. The Lemma is proved.
\end{proof}

\subsection{Cyclic behavior}
\label{SectionCyclicBehavior}

Lemma~\ref{LemmaCyclicBehavior} in this subsection proves Point (3) in Proposition~\ref{PropositionCyclicBehaviour}.

\begin{lemma}
\label{LemmaCyclicBehavior}
For any $[\alpha,i]\in\cI(\gamma)$ we have 
$$
\widehat{\cT}_\gamma\big(\partial^{[\alpha,i]}\cO(\gamma)\big)
\subset
\partial^{[\alpha,i-1]}\cO(\gamma).
$$
\end{lemma}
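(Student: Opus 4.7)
The plan is to take an arbitrary $\cV\in\partial^{[\alpha,i]}\cO(\gamma)$, approximate it by a sequence of non-degenerate configurations $\cV_n\in\cO(\gamma)$ with $\cV_n\to\cV$, and invoke the continuity of $\widehat{\cT}_\gamma$ established in Lemma~\ref{LemmaContinuityExtendedThurstonMap} to write $\cV':=\widehat{\cT}_\gamma(\cV)=\lim_n\cT_\gamma(\cV_n)$. The task then reduces to locating, in $\cV'$, the specific collision placing it on the face $\partial^{[\alpha,i-1]}\cO(\gamma)$, by tracing through the defining formula $v'([\beta,j])=\lim_n f_{\tau(\cV_n)}^{-1}\big(v_n([\beta,j+1])\big)$ the image of the collision already present in $\cV$.

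The key geometric observation underpinning the whole argument is that the standard points $\underline{v([\chi,k])}$ are the left endpoints of the $N$ atoms of $\cP(T_\gamma,r)$, all of length $1/N$, and that $T_\gamma^{-1}$ is piecewise a translation with continuity intervals $I^b_\chi(T_\gamma)$, sending $\underline{v([\chi,k])}$ to $\underline{v([\chi,k-1])}$. Its critical values are exactly the points $\underline{v([\chi,1])}=u^b_\chi(T_\gamma)$ for $\chi\in\cA$. Consequently, unless $[\alpha,i]^\ast=[\chi,1]$ for some $\chi$, the two consecutive atom boundaries $\underline{v([\alpha,i])}$ and $\underline{v([\alpha,i]^\ast)}$ lie inside a common continuity interval of $T_\gamma^{-1}$, whence the geometric successor operation commutes with the label shift by $-1$: writing $[\beta,j]:=[\alpha,i]^\ast$ one has $[\alpha,i-1]^\ast=[\beta,j-1]$.

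From here I would proceed by a natural trichotomy. In the regular case with $[\alpha,i]\neq[\alpha_m,i_m]$ and $[\alpha,i]^\ast\neq[\chi,1]$ for all $\chi$, the sequences $v_n([\alpha,i])$ and $v_n([\beta,j])$ converge to the same limit $y$ while the intervals $(v_n([\alpha,i]),v_n([\beta,j]))\subset I^b_\chi(f_{\tau(\cV_n)})$ have vanishing length, so Lemma~\ref{LemmaControlExpansionInverseBranches} forces their $f_{\tau(\cV_n)}^{-1}$-images to coincide in the limit, yielding $v'([\alpha,i-1])=v'([\beta,j-1])=v'([\alpha,i-1]^\ast)$. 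In the singular case with $[\alpha,i]^\ast=[\chi,1]$ and $[\alpha,i]\neq[\alpha_m,i_m]$, set $\chi'$ to be the letter with $\pi_b(\chi')=\pi_b(\chi)-1$; then $\underline{v([\alpha,i])}$ is the rightmost atom boundary inside $I^b_{\chi'}(T_\gamma)$, and the translation $T_\gamma^{-1}|_{I^b_{\chi'}}$ moves $\underline{v([\alpha,i-1])}$ to the rightmost atom boundary of $I^t_{\chi'}(T_\gamma)$, whose geometric successor is $\sup I^t_{\chi'}$. This equals $u^t_{\chi''}(T_\gamma)=\underline{v([\chi'',0])}$ with $\pi_t(\chi'')=\pi_t(\chi')+1$ whenever $\pi_t(\chi')<d$, in which case $[\alpha,i-1]^\ast=[\chi'',0]$; and it equals $1$ when $\pi_t(\chi')=d$, in which case one verifies that $[\alpha,i-1]=[\alpha_m,i_m]$. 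Either way, Lemma~\ref{LemmaControlExpansionInverseBranches} applied to the shrinking interval $(v_n([\alpha,i]),u^b_\chi(f_{\tau(\cV_n)}))\subset I^b_{\chi'}(f_{\tau(\cV_n)})$ gives $v'([\alpha,i-1])=\lim_n\sup I^t_{\chi'}(f_{\tau(\cV_n)})$, which is $v'([\chi'',0])$ in the first sub-case and $1$ in the second, as required.

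Finally, the maximal face $[\alpha,i]=[\alpha_m,i_m]$ is handled in the same spirit: $\underline{v([\alpha_m,i_m])}=1-1/N$ lies in the interior of $I^b_\chi(T_\gamma)$ with $\pi_b(\chi)=d$, and admissibility of $\pi$ forces $\pi_t(\chi)<d$, so taking $\chi''$ with $\pi_t(\chi'')=\pi_t(\chi)+1$ the same translation argument identifies $[\alpha_m,i_m-1]^\ast=[\chi'',0]$, and Lemma~\ref{LemmaControlExpansionInverseBranches}, now applied to the shrinking interval $(v_n([\alpha_m,i_m]),1)\subset I^b_\chi(f_{\tau(\cV_n)})$, delivers $v'([\alpha_m,i_m-1])=\lim_n u^t_{\chi''}(f_{\tau(\cV_n)})=v'([\chi'',0])$. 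The main obstacle throughout is the purely combinatorial bookkeeping that matches the label shift $[\cdot,\cdot]\mapsto[\cdot,\cdot-1]$ against the geometric successor operation across the discontinuities of $T_\gamma^{-1}$; once this is organised by the above trichotomy, the analytic content is delivered uniformly by Lemma~\ref{LemmaControlExpansionInverseBranches}.
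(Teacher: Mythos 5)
Your proposal is correct and follows essentially the same route as the paper's proof: approximate the degenerate configuration by interior configurations, track the image of the collision through the defining formula $v'_n([\alpha,i-1])=f_{\tau(\cV_n)}^{-1}(v_n([\alpha,i]))$, and use Lemma~\ref{LemmaControlExpansionInverseBranches} to control the shrinking preimages. The only noteworthy difference is organizational: the paper splits into five cases (leftmost point, critical values, rightmost point, successor is a critical value, other points), whereas you collapse this to a cleaner trichotomy (regular, singular, maximal face) by observing that what matters is only whether $[\alpha,i]^\ast$ is a critical value $[\chi,1]$ or not, and whether $[\alpha,i]=[\alpha_m,i_m]$. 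Your single regular case absorbs the paper's Cases 1(b), 2(b) and 5, and your singular case absorbs Cases 1(a), 2(a) and 4, with the same uniform application of Lemma~\ref{LemmaControlExpansionInverseBranches} (the paper instead argues directly in the subcases where a branch disappears from the alphabet, but the content is equivalent). You are also slightly more explicit than the paper about the degenerate subcase of the singular case where $\pi_t(\chi')=d$ forces $[\alpha,i-1]=[\alpha_m,i_m]$, which the paper handles only implicitly. One trivial slip: the limit identity $\widehat{\cT}_\gamma(\cV)=\lim_n\cT_\gamma(\cV_n)$ is the definition in Equation~\eqref{EquationDefinitionExtensionThurstonMap} and Lemma~\ref{LemmaDefinitionExtensionThurstonMap}, not a consequence of the continuity Lemma~\ref{LemmaContinuityExtendedThurstonMap}; but this does not affect the argument.
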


\begin{proof}
Consider a degenerate configuration 
$
\cV\in\partial\cO(\gamma)
$ 
and its image $\cV':=\widehat{\cT}_\gamma(\cV)$. Then let $\cV_n\in\cO(\gamma)$ be  a sequence with 
$\cV_n\to\cV$ as $n\to\infty$ and let $\cV'_n:=\cT_\gamma(\cV_n)$ be its image, so that 
$
\cV'=\lim_{n\to\infty}\cV'_n
$, 
according to the definition of the map 
$
\widehat{\cT}_\gamma
$ 
in Equation~\eqref{EquationDefinitionExtensionThurstonMap}. For $[\beta,j]\in\cI(\gamma)$ denote by 
$v([\beta,j])$, $v'([\beta,j])$, $v_n([\beta,j])$ and $v'_n([\beta,j])$ the corresponding labelled points respectively in the configurations $\cV$, $\cV'$, $\cV_n$ and $\cV'_n$. 

Fix $[\alpha,i]\in\cI(\gamma)$ and consider  
$
\cV\in\partial^{[\alpha,i]}\cO(\gamma)
$, 
that is a degenerate configuration with 
$
v([\alpha,i])=v([\alpha,i]^\ast)
$. 
The Lemma follows proving that for the image  configuration 
$
\cV'=\widehat{\cT}_\gamma(\cV)
$ 
we have 
$
v'([\alpha,i-1])=v'([\alpha,i-1]^\ast)
$, 
that is
\begin{equation}
\label{EquartionLemmaCyclicBehavior}
\big|
v'_n([\alpha,i-1]^\ast)
-
v'_n([\alpha,i-1])
\big|\to0
\quad
\textrm{ as }
\quad
n\to\infty.
\end{equation}
We consider separately the cases below. 

\smallskip

\emph{Case (1): leftmost point of $\cV$.} Let $[\alpha,i]:=[\alpha_0,0]$, where $\alpha_0$ is the letter with 
$\pi_t(\alpha_0)=1$. Recall that in this case  
$
v([\alpha_0,0])=v'([\alpha_0,0])=v_n([\alpha_0,0])=v'_n([\alpha_0,0])=0
$. 
Moreover let $\beta_0$ be the letter with $\pi_b(\beta_0)=1$ and observe that the identifications on 
$\cA\times\ZZ$ induced by Equation~\eqref{EquationIdentificationLabelsCriticalPoints} give  
$
[\alpha_0,-1]=[\beta_0,0]
$. 
Consider the label $[\chi,j]:=[\alpha_0,0]^\ast$, so that in particular $v_n([\chi,j])\to0$ as 
$n\to\infty$. 

If $\pi_t(\chi)=2$ and $j=1$, that is 
$
\underline{v([\alpha_0,0]^\ast)}
$ 
is the critical value of $T_\gamma$ right after $0$, then 
$
\underline{v([\beta_0,0]^\ast)}=\sup I^t_{\beta_0}(T_\gamma)
$. 
In this case condition $v_n([\alpha_0,0]^\ast)\to0$ as $n\to\infty$ means that the limit 
$
\widehat{F}\big(\tau(\cV)\big)=\lim_{n\to\infty}f_{\tau(\cV_n)}
$ 
has regular part $f_{\tau(\cV)}$ which does not contain $\beta_0$ in its alphabet, that is the intervals 
$
I^t_{\beta_0}(f_{\tau(\cV_n)})
$ 
and 
$
I^b_{\beta_0}(f_{\tau(\cV_n)})
$ 
vanish to a point. Therefore Equation~\eqref{EquartionLemmaCyclicBehavior} is satisfied, indeed we have
$$
v'_n([\beta_0,0]^\ast)-v'_n([\beta_0,0])
=
\sup I^t_{\beta_0}(f_{\tau(\cV_n)}) - \min I^t_{\beta_0}(f_{\tau(\cV_n)})
\to 0.
$$

In all other cases we have $[\beta_0,0]^\ast=[\chi,j-1]$, 
since $\underline{v([\chi,j])}$ belongs to the right-open interval 
$
[0,\lambda^{(\gamma)}_{\beta_0})=I^b_{\beta_0}(T_\gamma)
$ 
where $T_\gamma^{-1}$ acts continuously. The interval 
$
I_n:=\big(0,v_n([\chi,j])\big)
$ 
satisfies the assumption in Lemma~\ref{LemmaControlExpansionInverseBranches}, indeed we have 
$$
v_n([\alpha_0,0])=0<v_n([\chi,j])<\sup I^b_{\beta_0}(f_{\tau(\cV_n)}).
$$
Therefore Equation~\eqref{EquartionLemmaCyclicBehavior} follows from Lemma~\ref{LemmaControlExpansionInverseBranches}, since we have 
\begin{align*}
&
\big|
v'_n([\chi,j-1])-v'_n([\beta_0,0])
\big|
=
\big|
f_{\tau(\cV_n)}^{-1}\big(v_n([\chi,j])\big)
-
f_{\tau(\cV_n)}^{-1}\big(v_n([\alpha_0,0])\big)
\big|=
\\
&
\big|
f_{\tau(\cV_n)}^{-1}\big(v_n([\chi,j])\big)
-
f_{\tau(\cV_n)}^{-1}(0)
\big|
=
\big|
f_{\tau(\cV_n)}^{-1}\big(I_n\big)
\big|
\to0
\quad
\textrm{ as }
\quad
n\to\infty.
\end{align*}

\smallskip

\emph{Case (2): critical values.} Consider $[\alpha,i]$ with $\pi_b(\alpha)\geq 2$ and $i=1$, then consider the label $[\chi,j]:=[\alpha,1]^\ast$, so that 
$
v_n([\chi,j])
$ 
is collapsing onto the critical value $v_n([\alpha,1])$ of $f_{\tau(\cV_n)}$ at its left. One reasons as in Case (1). If 
$
\underline{v([\alpha,1]^\ast)}=\sup I^b_\alpha(T_\gamma)
$, 
then  
$
\underline{v([\alpha,0]^\ast)}=\sup I^t_\alpha(T_\gamma)
$ 
and Equation~\eqref{EquartionLemmaCyclicBehavior} becomes
$$
v'_n([\alpha_0,0]^\ast)-v'_n([\alpha_0,0])=
\sup I^t_\alpha(f_{\tau_n})-\min I^t_\alpha(f_{\tau_n})\to0,
$$
which is satisfied because the limit 
$
\widehat{F}\big(\tau(\cV)\big)=\lim_{n\to\infty}f_{\tau(\cV_n)}
$ 
has regular part $f_{\tau(\cV)}$ which does not contain $\alpha$ in its alphabet, that is the intervals labeled by the letter $\alpha$ are shrunk to a point. Otherwise, in all other cases one has $[\alpha,0]^\ast=[\chi,j-1]$, so that Equation~\eqref{EquartionLemmaCyclicBehavior} becomes 
$$
v'_n([\chi,j-1])-v'_n([\alpha,0])\to0,
$$
which follows from Lemma~\ref{LemmaControlExpansionInverseBranches}. Details are left to the reader.

\smallskip

\emph{Case (3): rightmost point of $\cV$.} Consider $[\alpha,i]:=[\alpha_m,i_m]$. Recall that in this case we have 
$
v([\alpha_m,i_m]^\ast)=
v'([\alpha_m,i_m]^\ast)=
v_n([\alpha_m,i_m]^\ast)=
v'_n([\alpha_m,i_m]^\ast)=1
$. 
Recall also from \S~\ref{SectionRauzyClasses} that we call $\alpha_b$ the letter with $\pi_b(\alpha_b)=d$ and let $\beta$ be the letter with $\pi_t(\beta)=\pi_t(\alpha_b)+1$. Observe that we have  
$
[\alpha_m,i_m-1]^\ast=[\beta,0]
$. 
We have $v_n([\alpha_m,i_m])\to1$ as $n\to\infty$, where 
$$
u^b_{\alpha_b}(f_{\tau(\cV_n)})=
v_n([\alpha_b,1])\leq
v_n([\alpha_m,i_m])<
1=\sup I^b_{\alpha_b}(f_{\tau(\cV_n)}),
$$
so that the interval $I_n:=\big(v_n([\alpha_m,i_m]),1\big)$ satisfies the assumption of Lemma~\ref{LemmaControlExpansionInverseBranches}, therefore Equation~\eqref{EquartionLemmaCyclicBehavior}  follows from such Lemma, since we have
\begin{align*}
&
\big|
v'_n([\alpha_m,i_m-1]^\ast)-v'_n([\alpha_m,i_m-1])
\big|
=
\big|
v'_n([\beta,0])-v'_n([\alpha_m,i_m-1])
\big|=
\\
&
\big|
f_{\tau(\cV_n)}^{-1}\big(v_n([\beta,1])\big)
-
f_{\tau(\cV_n)}^{-1}\big(v_n([\alpha_m,i_m])\big)
\big|
=
\big|
f_{\tau(\cV_n)}^{-1}\big(I_n\big)
\big|
\to0
\quad
\textrm{ as }
\quad
n\to\infty.
\end{align*}

\smallskip

\emph{Case (4): left neighborhood of critical values.} Consider $[\alpha,i]$ such that 
$
[\alpha,i]^\ast=[\chi,1]
$ 
for some $\chi$ with $\pi_b(\chi)\geq2$, then let $\beta$ be the letter with 
$
\pi_b(\beta)=\pi_b(\chi)-1
$, 
so that for the standard configuration $\cV^{(\gamma)}$ we have 
$
\underline{v([\alpha,i-1]^\ast)}=\sup I^t_\beta(T_\gamma)
$. 
In this case we have by assumption 
$
v_n([\chi,1])-v_n([\alpha,i])\to0 
$ 
as $n\to\infty$, where 
$$
u^b_\beta(f_{\tau(\cV_n)})=
v_n([\beta,1])\leq
v_n([\alpha,i])<
v_n([\chi,1])=\sup I^b_\beta(f_{\tau(\cV_n)}),
$$
and, reasoning as in Case (3), Equation~\eqref{EquartionLemmaCyclicBehavior} follows applying Lemma~\ref{LemmaControlExpansionInverseBranches} to the interval 
$
I_n:=\big(v_n([\alpha,i]),v_n([\chi,1])\big)
$, 
indeed we have 
\begin{align*}
&
\big|
v'_n([\alpha,i-1]^\ast)-v'_n([\alpha,i-1])
\big|
=
\\
&
\big|
\sup I^t_\beta(f_{\tau(\cV_n)})
-
f_{\tau(\cV_n)}^{-1}\big(v_n([\alpha,i])\big)
\big|
=
\big|
f_{\tau(\cV_n)}^{-1}\big(I_n\big)
\big|
\to0
\quad
\textrm{ as }
\quad
n\to\infty.
\end{align*}

\smallskip

\emph{Case (5): other points.} In all the remaining cases the labels $[\alpha,i]$ and 
$[\chi,j]:=[\alpha,i]^\ast$ correspond to points 
$
\underline{v([\alpha,i])}
$ 
and 
$
\underline{v([\chi,j])}
$ 
of the standard configuration $\cV^{(\gamma)}$ in the interior of the same continuity interval of 
$T_\gamma^{-1}$, hence we have 
$
[\alpha,i-1]^\ast=[\chi,j-1]
$. 
Therefore Equation~\eqref{EquartionLemmaCyclicBehavior} follows applying Lemma~\ref{LemmaControlExpansionInverseBranches} to the interval 
$
I_n:=\big(v_n([\alpha,i]),v_n([\chi,j])\big)
$. 
Details are left to the reader. 

\smallskip

The analysis of cases is complete and the Lemma is proved. 
\end{proof}

\end{document}